\documentclass[11pt, reqno]{amsart}

\usepackage{color}
\usepackage{amsmath}
\usepackage{amssymb}
\usepackage{amsfonts}
\usepackage[lmargin=3cm,rmargin=3cm,tmargin=3cm,bmargin=3cm]{geometry}



\textheight 22cm \textwidth 15.5cm
 \voffset=0.5cm
\oddsidemargin=0.5cm \evensidemargin=0.5cm \topmargin=0.5cm
\def\R{\mathbb{R}}

\numberwithin{equation}{section}

\newcommand{\EP}{\exp L^p}
\newcommand{\EPO}{\exp L^p_0}

\newtheorem{theorem}{Theorem}[section]
\newtheorem{lemma}[theorem]{Lemma}
\newtheorem{proposition}[theorem]{Proposition}
\newtheorem{corollary}[theorem]{Corollary}

\newtheorem{definition}[theorem]{Definition}

\newtheorem{remark}[theorem]{Remark}
\newtheorem{remarks}[theorem]{Remarks}

\author[M. Majdoub]{Mohamed Majdoub}
\address{Mohamed Majdoub\\ Department of Mathematics\\ College of Science\\ Imam Abdulrahman Bin Faisal University\\
P.O. Box 1982, Dammam, Saudi Arabia}
\email{\sl mmajdoub@iau.edu.sa}
\author[S. Tayachi]{Slim Tayachi}
\address{Slim Tayachi\\ Universit\'e de Tunis El Manar\\ Facult\'e des Sciences de Tunis\\ D\'epartement de Math\'ematiques\\ Laboratoire \'equations aux d\'eriv\'ees partielles (LR03ES04), 2092 Tunis, Tunisie} \email{\sl slim.tayachi@fst.rnu.tn,  slimtayachi@gmail.com}


\subjclass[2010]{35K58, 35A01, 35B40, 46E30}
\keywords{Nonlinear heat equation, Existence, Non-existence, Orlicz spaces}


\title[The heat equation with power-exponential nonlinearities]{Well-posedness, Global existence and decay estimates for the heat equation with general power-exponential nonlinearities}

\begin{document}
\begin{abstract} In this paper we consider the problem: $\partial_{t} u- \Delta u=f(u),\; u(0)=u_0\in \exp L^p(\R^N),$ where $p>1$ and $f : \R\to\R$ having an exponential growth at infinity with $f(0)=0.$ We prove local well-posedness in $\exp L^p_0(\R^N)$ for $f(u)\sim \mbox{e}^{|u|^q},\;0<q\leq p,\; |u|\to \infty.$ However,  if for some $\lambda>0,$ $\displaystyle\liminf_{s\to \infty}\left(f(s)\,{\rm{e}}^{-\lambda s^p}\right)>0,$ then non-existence occurs in $\exp L^p(\R^N).$ Under smallness condition on the initial data and for exponential nonlinearity $f$ such that  $|f(u)|\sim |u|^{m}$ as $u\to 0,$ ${N(m-1)\over 2}\geq p$, we show that the solution is global. In particular, $p-1>0$ sufficiently small is allowed. Moreover, we obtain decay estimates in Lebesgue spaces for large time which depend on $m$.
\end{abstract}
\maketitle

\section{Introduction}


In this paper we study  the Cauchy problem:
\begin{equation}\label{1.1}
\left\{\begin{array}{cc}
\partial_{t} u- \Delta u=f(u),   \\
u(0)=u_0\in \exp L^p(\R^N),
\end{array}
\right.
\end{equation}
where $p>1$ and $f : \R\to\R$ having an exponential growth at infinity with $f(0)=0.$

As is a standard practice, we study \eqref{1.1} via the associated integral equation:
\begin{equation}
\label{integral}
u(t)= {\rm e}^{t\Delta}u_{0}+\int_{0}^{t}{\rm e}^{(t-s)\Delta}\,f(u(s))\,ds,
\end{equation}
where ${\rm e}^{t\Delta}$ is the linear heat semi-group. {The Cauchy problem \eqref{1.1} has been extensively studied in the scale of Lebesgue spaces, especially for polynomial type nonlinearities. It is known that in this case one can always find a Lebesgue space $L^q,\; q<\infty$ for which \eqref{1.1} is locally well-posed. See for instance \cite{Br, HW, W, Indiana}.}

By analogy with the Lebesgue spaces, which are well-adapted to  the heat equations  with power  nonlinearities (\cite{W2}), we are motivated to consider the Orlicz spaces, in order to study  heat equations with power-exponential nonlinearities. Such spaces were introduced by Birnbaum and Orlicz \cite{BiOr} as a natural generalization of the classical Lebesgue spaces $L^q,\, 1<q<\infty$. For this generalization the function $x^q$ entering in the definition of $L^q$ space is replaced by a more general convex function: in particular ${\rm e}^{x^q}-1$.

 For the particular case where $f(u)\sim {\rm e}^{|u|^2},\; u$ large, well-posedness  results are proved in the Orlicz space $\exp L^2(\R^N).$ See \cite{IJMS, Ioku, IRT, RT}. It is also proved that if $f(u)\sim {\rm e}^{|u|^s},\, s>2,\; u$ large then the  existence is no longer guaranteed and in fact there is nonexistence in the Orlicz space  $\exp L^2(\R^N).$  See \cite{IRT}. Global existence and decay estimates are also established for the nonlinear heat equation with $f(u)\sim {\rm e}^{|u|^2},\; u$ large. See \cite{Ioku, MOT, FKRT}.

 Here we consider the general case $f(u)\sim {\rm e}^{|u|^q},\;q>1,$  $u$ large. For such exponential nonlinearities, the most adaptable space is the so-called Orlicz space  $\exp L^p(\R^N), \; p\geq q>1.$ We aim to study local well-posedness  and look for the maximum power of the nonlinearity in terms of the existence of solutions in these spaces.  We also study the global existence for small initial data and determine the decay estimates for large time. For the global existence, we  aim to allow $f$ to behave like $|u|^{m-1}u$ near the origin, with $m>1+{2/N}.$ That is  to reach the Fujita critical exponent $1+{2/N}$.

The Orlicz space $\exp L^p(\R^N)$ is a generalization of Lebesgue spaces and contains $L^r(\R^N)$ for every $p\leq r<\infty.$ It is defined as follows
\begin{equation*}
    \exp L^p(\R^N)=\bigg\{\,u\in L^1_{loc}(\R^N);\;\int_{\R^N}\Big({\rm e}^{|u(x)|^p\over\lambda^p}-1\Big)\,dx<\infty ,\,\;\mbox{for some}\,\; \lambda>0\, \bigg\},
\end{equation*}
endowed with the Luxembourg norm
\begin{equation*}
    \|u\|_{\exp L^p(\R^N)}:=\inf\biggr\{\,\lambda>0;\,\,\,\,\int_{\R^N} \Big({\rm e}^{|u(x)|^p\over\lambda^p}-1\Big)\,dx\leq1\,\biggl\}.
\end{equation*}

Since the space of smooth compactly supported functions $C^{\infty}_0(\R^N)$ is not dense in the Orlicz space $\exp L^p(\R^N)$ (see \cite{IRT, Ioku}), we use the space $\exp L^p_0(\R^N)$  which is the closure of $C^{\infty}_0(\R^N)$ with respect to  the Luxemburg norm $\|\cdot\|_{\exp L^p(\R^N)}$.
It is known that \cite{IRT}
\begin{equation}\label{exp L0}
    \exp L^p_0(\R^N)=\biggr\{\,u\in L^1_{loc}(\R^N);\;\int_{\R^N}\Big({\rm e}^{\alpha|u(x)|^p}-1\Big)\,dx<\infty ,\,\; \mbox{for every}\,\; \alpha>0\, \biggl\}.
\end{equation}
 It is easy to show that the linear heat semi-group  ${\rm e}^{t\Delta}$ is continuous at $t=0$ in $\exp L^p_0(\R^N).$ However, this is not the case in  $\exp L^p(\R^N).$

In the sequel, we adopt the following definitions of weak, weak-mild and classical  solutions to Cauchy problem \eqref{1.1}.
\begin{definition}[{Weak solution}]
\label{weak}
Let $u_0\in\exp L^p_0(\R^N)$ and $T>0$. We say that the function $u\in C([0,T];\,\exp L^p_0(\R^N))$ is a weak solution of \eqref{1.1} if $u$ verifies \eqref{1.1} in the sense of distribution and $u(t)\to u_0$ in the weak$^*$topology as $t\searrow 0.$
\end{definition}

\begin{definition}[{Weak-mild solution}]
\label{weakmild}
We say that $u\in L^{\infty}(0,T;\, \exp L^p(\R^N))$  is a weak-mild solution of
the Cauchy problem \eqref{1.1} if $u$ satisfies the associated integral equation \eqref{integral} in $\exp L^p(\R^N)$ for almost all $t\in (0,T)$ and $u(t)\rightarrow u_0$ in the weak$^*$ topology as $t\searrow 0.$
\end{definition}

\begin{definition}[{$\exp L^p-$classical solution}]
\label{weakmild}
Let $u_0\in \exp L^p(\R^N)$ and $T>0.$ A function $u\in C((0,T];\exp L^p(\R^N))\cap L^\infty_{loc}(0,T;L^\infty(\R^N))$ is said to be $\exp L^p-$classical solution of   \eqref{1.1} if  $u\in C^{1,2}((0,T)\times \R^N)$, verifies  \eqref{1.1} in the classical sense and $u(t)\to u_0$ in the weak$^*$topology as $t\searrow 0.$
\end{definition}

We are first interested in the local well-posedness. Since $C_0^\infty(\R^N)$ is dense in $\exp L^p_0(\R^N)$, we are able to prove local existence and uniqueness to \eqref{1.1}  for initial data in $\exp L^p_0(\R^N)$. We assume that the nonlinearity $f$ satisfies

\begin{equation}\label{1.9}
    f(0)=0,\qquad |f(u)-f(v)|\leq C|u-v|({\rm e}^{\lambda \,|u|^p}+{\rm e}^{\lambda \,|v|^p}),\;\forall\;\;u,\,v\in\R,
\end{equation}
for some constants $C>0,\; p>1$ and $\lambda>0$. Our first main result reads as follows.

\begin{theorem}[{Local well-posedness}]\label{local} Suppose that $f$ satisfies \eqref{1.9}. Given any $u_0\in\exp L^p_0(\R^N)$ with $p>1,$ there exist a time $T=T(u_0)>0$ and a unique weak solution $u\in C ([0,T];\,\exp L^p_0(\R^N))$ to \eqref{1.1}.
\end{theorem}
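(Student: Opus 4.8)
The plan is to run a standard contraction-mapping (Banach fixed point) argument for the integral formulation \eqref{integral} in a suitable complete metric space, then upgrade the resulting mild solution to a weak solution. First I would fix $u_0\in\exp L^p_0(\R^N)$ and, for $T>0$ and $M>0$ to be chosen, define
\[
X_{T,M}=\Bigl\{u\in C([0,T];\exp L^p_0(\R^N)):\ \sup_{0\le t\le T}\|u(t)-{\rm e}^{t\Delta}u_0\|_{\exp L^p(\R^N)}\le M\Bigr\},
\]
with the metric inherited from $C([0,T];\exp L^p(\R^N))$; since ${\rm e}^{t\Delta}$ is strongly continuous at $t=0$ on $\exp L^p_0$, the free term $t\mapsto{\rm e}^{t\Delta}u_0$ lies in $C([0,T];\exp L^p_0)$, and $X_{T,M}$ is a nonempty complete metric space. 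On this set I would study the map $\Phi(u)(t)={\rm e}^{t\Delta}u_0+\int_0^t{\rm e}^{(t-s)\Delta}f(u(s))\,ds$.

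The key estimates are a smoothing inequality for ${\rm e}^{t\Delta}$ between Orlicz spaces and a nonlinear estimate coming from \eqref{1.9}. For $u\in X_{T,M}$ one has $\|u(s)\|_{\exp L^p}\le\|{\rm e}^{s\Delta}u_0\|_{\exp L^p}+M$; writing $u_0=\phi+\psi$ with $\phi\in C_0^\infty$ and $\|\psi\|_{\exp L^p}$ small (possible since $u_0\in\exp L^p_0$), one controls $\|{\rm e}^{s\Delta}u_0\|_{\exp L^p}$ by $\|\phi\|_{L^\infty}+\|\psi\|_{\exp L^p}$, so that for $T$ small and $M$ moderate the quantity $\lambda\|u(s)\|_{\exp L^p}^p$ stays below the threshold that makes ${\rm e}^{\lambda|u(s)|^p}-1$ integrable with controlled norm. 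Using \eqref{1.9} with $v=0$ and $f(0)=0$ gives a pointwise bound $|f(u(s,x))|\le C|u(s,x)|{\rm e}^{\lambda|u(s,x)|^p}$, and a Hölder-type inequality in Orlicz spaces bounds $\|f(u(s))\|_{L^r}$ for a suitable $r\in(1,\infty)$ (or directly in some $\exp L^\sigma$) in terms of $\|u(s)\|_{\exp L^p}$. Then the $L^r\!-\!\exp L^p$ smoothing estimate $\|{\rm e}^{(t-s)\Delta}g\|_{\exp L^p}\le C(t-s)^{-N/(2r)}\|g\|_{L^r}$, together with the integrability of $(t-s)^{-N/(2r)}$ near $s=t$ (choosing $r$ large enough that $N/(2r)<1$), yields
\[
\sup_{0\le t\le T}\Bigl\|\int_0^t{\rm e}^{(t-s)\Delta}f(u(s))\,ds\Bigr\|_{\exp L^p}\le C\,T^{1-N/(2r)}\,\Psi(M),
\]
and an analogous bound for the difference $\Phi(u)-\Phi(w)$ using the Lipschitz form of \eqref{1.9}. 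Choosing first $M$ and then $T$ small makes $\Phi$ map $X_{T,M}$ into itself and be a strict contraction; the fixed point $u$ is the desired mild solution, and continuity in time at $t=0$ in the $\exp L^p_0$-norm follows from that of the free term plus the $O(T^{1-N/(2r)})$ Duhamel term. Uniqueness in $C([0,T];\exp L^p_0)$ follows from the contraction estimate applied on a possibly shorter interval together with a standard continuation/connectedness argument.

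Finally I would check that the mild solution is a weak solution in the sense of Definition \ref{weak}: the integral equation \eqref{integral} implies the PDE in $\mathcal{D}'((0,T)\times\R^N)$ by the usual duality computation against test functions (using that $f(u)\in L^1_{loc}$, which follows from the estimates above), and $u(t)\to u_0$ in the weak$^*$ topology as $t\searrow0$ because $u\in C([0,T];\exp L^p_0)$ with $u(0)=u_0$. The main obstacle is the nonlinear estimate: one must quantify precisely how small $\lambda\|u(s)\|_{\exp L^p}^p$ must be for ${\rm e}^{\lambda|u(s)|^p}$ to be controlled in an Orlicz/Lebesgue norm, and show this smallness is achievable uniformly on $[0,T]$ by exploiting $u_0\in\exp L^p_0$ (not merely $\exp L^p$) to kill the non-small part of ${\rm e}^{s\Delta}u_0$; the borderline exponent $p$ in both the data space and the nonlinearity is exactly what makes this delicate, and it is the reason the theorem is stated in the closure space $\exp L^p_0$.
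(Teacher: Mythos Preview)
Your overall plan---fixed point on the integral equation, exploiting the density of $C_0^\infty$ in $\exp L^p_0$---is the right one, and your final paragraph correctly identifies the crux. But the specific mechanism you propose for controlling the nonlinearity does not work as written. You want $\lambda\|u(s)\|_{\exp L^p}^p$ to fall below the threshold (essentially $\lambda q K^p\le 1$ as in Lemma~\ref{med}) so that ${\rm e}^{\lambda|u(s)|^p}-1$ is controlled in $L^q$. For large data $u_0$ this is impossible: the decomposition $u_0=\phi+\psi$ gives the bound $\|{\rm e}^{s\Delta}u_0\|_{\exp L^p}\lesssim\|\phi\|_{L^\infty}+\|\psi\|_{\exp L^p}$, but $\|\phi\|_{L^\infty}$ is \emph{large}, not small, so $\|u(s)\|_{\exp L^p}$ stays of order $\|u_0\|_{\exp L^p}$ no matter how small $T$ and $M$ are. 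Making $T$ small does nothing here because ${\rm e}^{s\Delta}u_0\to u_0$ as $s\to 0$. Hence the threshold condition is never met and your nonlinear estimate as stated has no input.

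The paper fixes this by using the decomposition at the level of the \emph{solution}, not just the data norm: write $u=v+w$ and solve two coupled problems. First solve $\partial_t v-\Delta v=f(v)$ with $v(0)=\phi\in C_0^\infty$ by a fixed point in $C([0,T];L^p\cap L^\infty)$ (Proposition~\ref{C0}); here the exponential is harmless because $|v|\le M$ pointwise, so ${\rm e}^{\lambda|v|^p}\le{\rm e}^{\lambda M^p}$ is just a constant. Then solve the perturbed equation $\partial_t w-\Delta w=f(w+v)-f(v)$ with $w(0)=\psi$ small in $\exp L^p$; now the key estimate (Lemma~\ref{4.3333}) uses the pointwise inequality $|w+v|^p\le 2^{p-1}(|w|^p+|v|^p)$ to pull out the bounded factor ${\rm e}^{2^{p-1}\lambda\|v\|_\infty^p}$ and apply Lemma~\ref{med} only to the genuinely small piece $w$. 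In other words, the $L^\infty$ bound on the smooth part is used \emph{pointwise inside the exponential}, not to bound an $\exp L^p$ norm. Your single-ball approach could in principle be salvaged by the same pointwise split $u(s)={\rm e}^{s\Delta}\phi+\text{(small in }\exp L^p)$ inside the nonlinear estimate, but that is not what you wrote, and the two-problem scheme is cleaner.
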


We stress that the density of  $C_0^\infty(\R^N)$ in $\exp L^p_0(\R^N)$ is crucial in the above Theorem. In fact we have obtained the following non-existence result in $\exp L^p(\R^N)$.

\begin{theorem}[{Non-existence}]\label{nonex}
Let $p>1,\; \alpha>0$ and
\begin{equation}
\label{ID}
\Phi_\alpha(x)=\left\{\begin{array}{cc}
\alpha \Big(-\log|x|\Big)^{{1\over p}},\quad |x|<1,   \\\\
0,\quad \quad \quad \quad \quad \quad \quad |x|\geq 1.
\end{array}
\right.
\end{equation}
Assume that $f : \R \to \R$ is continuous, positive  on $[0,\infty)$ and satisfies
\begin{equation}
\label{croissancef}
 \liminf_{s\to \infty}\left(f(s)\,{\rm{e}}^{-\lambda s^p}\right)>0, \quad \lambda>0.
 \end{equation}
 Then $\Phi_\alpha\in \exp L^p(\R^N)\setminus \exp L^p_0(\R^N)$ and there exists $\alpha_0>0$ such that for every $\alpha\geq \alpha _0$ and $T>0$ the Cauchy problem \eqref{1.1} with $u_0=\Phi_\alpha $ has no nonnegative $\exp L^p-$classical solution in $[0,T].$
\end{theorem}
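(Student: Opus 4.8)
The plan is to argue by contradiction, combining a sharp pointwise lower bound for ${\rm e}^{t\Delta}\Phi_\alpha$ near the origin with two applications of Duhamel's formula. The two membership statements are direct computations: since $|\Phi_\alpha(x)|^p=\alpha^p\log(1/|x|)$ for $|x|<1$, one has ${\rm e}^{|\Phi_\alpha(x)|^p/\lambda^p}=|x|^{-\alpha^p/\lambda^p}$ on $\{|x|<1\}$ (the integrand vanishing for $|x|\geq1$), which is integrable as soon as $\lambda>\alpha N^{-1/p}$; hence $\Phi_\alpha\in\exp L^p(\R^N)$. Taking instead $\alpha'=N/\alpha^p$ gives ${\rm e}^{\alpha'|\Phi_\alpha(x)|^p}=|x|^{-N}$, which is not integrable near the origin, so by the characterization \eqref{exp L0} of $\exp L^p_0(\R^N)$ we get $\Phi_\alpha\notin\exp L^p_0(\R^N)$. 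Moreover \eqref{croissancef} supplies constants $c>0$ and $M>0$ with $f(s)\geq c\,{\rm e}^{\lambda s^p}$ for every $s\geq M$.

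Now assume $u\geq0$ is an $\exp L^p$-classical solution on $[0,T]$ with $u(0)=\Phi_\alpha$. The first key point is the domination bound $u(t)\geq{\rm e}^{t\Delta}\Phi_\alpha$ for all $t\in(0,T)$: for $0<\varepsilon<t$ the function $u$ is bounded and of class $C^{1,2}$ on $[\varepsilon,t]\times\R^N$, so by uniqueness of bounded solutions of the linear heat equation $u(t)={\rm e}^{(t-\varepsilon)\Delta}u(\varepsilon)+\int_\varepsilon^t{\rm e}^{(t-s)\Delta}f(u(s))\,ds\geq{\rm e}^{(t-\varepsilon)\Delta}u(\varepsilon)$, and letting $\varepsilon\to0$ (along a sequence) the right-hand side tends pointwise to ${\rm e}^{t\Delta}\Phi_\alpha$ by the weak$^*$ convergence $u(\varepsilon)\to\Phi_\alpha$ and the fact that the heat kernel belongs to the predual of $\exp L^p(\R^N)$. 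The second key point is the Gaussian lower bound: there is $c_N>0$, depending only on $N$ and $p$, with
\[
\big({\rm e}^{s\Delta}\Phi_\alpha\big)(y)\ \geq\ c_N\,\alpha\,\Big(\log\tfrac1s\Big)^{1/p},\qquad |y|\leq\sqrt s,\quad 0<s<1 ,
\]
obtained by restricting $({\rm e}^{s\Delta}\Phi_\alpha)(y)=(4\pi s)^{-N/2}\int{\rm e}^{-|y-z|^2/4s}\Phi_\alpha(z)\,dz$ to $\{|z|<\sqrt s\}$, where $\log(1/|z|)\geq\frac12\log(1/s)$ and ${\rm e}^{-|y-z|^2/4s}\geq{\rm e}^{-1}$, the leftover factor $(4\pi s)^{-N/2}|\{|z|<\sqrt s\}|$ being scale invariant. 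Put $\alpha_0:=\big((N+2)/(2\lambda c_N^p)\big)^{1/p}$ and fix $s_0\in(0,1)$ so small that $c_N\alpha_0(\log(1/s))^{1/p}\geq M$ for $0<s<s_0$. Then for $\alpha\geq\alpha_0$, $0<s<s_0$ and $|y|\leq\sqrt s$, the domination bound and the growth of $f$ give
\[
f\big(u(s,y)\big)\ \geq\ c\,{\rm e}^{\lambda c_N^p\alpha^p\log(1/s)}\ =\ c\,s^{-\beta},\qquad \beta:=\lambda c_N^p\alpha^p .
\]
The choice of $\alpha_0$ ensures $\beta\geq N/2+1$.

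Finally, fix $t_0<\min(T,s_0)$ with $t_0<1$. Applying Duhamel on $[\varepsilon,t_0]$ as above, discarding ${\rm e}^{(t_0-\varepsilon)\Delta}u(\varepsilon)\geq0$, keeping only the times $s\in(\varepsilon,t_0/2)$ (so that $t_0-s$ is comparable to $t_0$) and the ball $|y|<\sqrt s$ (so that $|y|^2/(t_0-s)\leq1$), and inserting the bound just obtained, we get at $x=0$
\[
u(t_0,0)\ \geq\ \int_\varepsilon^{t_0/2}\!\big(4\pi(t_0-s)\big)^{-N/2}\!\!\int_{|y|<\sqrt s}\!{\rm e}^{-\frac{|y|^2}{4(t_0-s)}}f\big(u(s,y)\big)\,dy\,ds\ \geq\ C\int_\varepsilon^{t_0/2}s^{\frac N2-\beta}\,ds ,
\]
with $C>0$ independent of $\varepsilon$ (the volume $|\{|y|<\sqrt s\}|=|B(0,1)|\,s^{N/2}$ cancels part of the power of $s$). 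Since $\beta\geq N/2+1$, letting $\varepsilon\to0$ makes the right-hand side equal to $+\infty$, contradicting the finiteness of $u(t_0,0)$ for a classical solution. Hence for every $\alpha\geq\alpha_0$ and every $T>0$ the Cauchy problem has no nonnegative $\exp L^p$-classical solution on $[0,T]$. The main obstacle here is not the estimates but the rigorous use of Duhamel's formula: the datum $\Phi_\alpha$ lies only in $\exp L^p(\R^N)$, where ${\rm e}^{t\Delta}$ is not strongly continuous at $t=0$, and the notion of classical solution does not by itself provide the integral equation. This is handled by working on time slabs $[\varepsilon,t_0]$ on which $u$ is genuinely bounded and classical, so that the linear Duhamel identity and the comparison principle apply, and only afterwards letting $\varepsilon\to0$ via the weak$^*$ convergence $u(\varepsilon)\to\Phi_\alpha$; everything else reduces to the elementary Gaussian computation above and to the fact that $\int_0 s^{N/2-\beta}\,ds=+\infty$ once $\beta\geq N/2+1$, which \eqref{croissancef} forces exactly when $\alpha$ is large.
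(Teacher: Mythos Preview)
Your proof is correct and follows essentially the same strategy as the paper: obtain a pointwise lower bound for ${\rm e}^{s\Delta}\Phi_\alpha$ of size $c\,\alpha\,(\log(1/s))^{1/p}$ on a parabolic region, combine it with the growth hypothesis \eqref{croissancef} and the supersolution bound $u\geq{\rm e}^{s\Delta}\Phi_\alpha$, and conclude that the Duhamel integral diverges for $\alpha$ large. The paper records this computation as Lemma~\ref{NON} (divergence of the space--time integral $\int_0^\varepsilon\int_{|x|<r}\exp(\lambda({\rm e}^{t\Delta}\Phi_\alpha)^p)\,dx\,dt$) and then defers the remaining Duhamel/comparison argument to \cite{IRT}, whereas you carry out the whole argument explicitly, evaluate at the single point $x=0$, and use a slightly simpler integration region ($|z|<\sqrt{s}$ rather than the paper's $B(3x,|x|)$); these are minor variations of the same idea.
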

The results of Theorems \ref{local}-\ref{nonex} are known for $p=2$ in \cite{IRT}.

Our next interest is the global existence and the decay estimate. It depends on the behavior of the nonlinearity $f(u)$ near $u=0.$ The following behavior near $0$ will be allowed
$$|f(u)|\sim |u|^m,$$
where ${N(m-1)\over 2}\geq p.$  More precisely, we suppose that the nonlinearity $f$ satisfies
\begin{equation}\label{1.4}
f(0)=0,\qquad|f(u)-f(v)|\leq C\left|u-v\right|\bigg(|u|^{m-1}{\rm e}^{\lambda |u|^p}+|v|^{m-1}{\rm e}^{\lambda |v|^p}\bigg),\quad\forall\;u,\;v\in\R,
\end{equation}
where  ${N(m-1)\over 2}\geq p>1$, $C>0,$ and $\lambda>0$ are constants. Our aim is to obtain global existence to the Cauchy problem \eqref{1.1} for small initial data in $\exp L^p(\R^N)$. We have obtained the following.
\begin{theorem}[{Global existence}]
\label{GE} Let $N\geq 1,\; p>1,$  such that $N(p-1)/2>p.$ Assume that $m\geq p$ (hence $N(m-1)/2>p$) and the nonlinearity $f$ satisfies \eqref{1.4}. Then, there exists a positive constant $\varepsilon>0$ such that every initial data $u_0\in \exp L^p(\R^N)$
with $\|u_0\|_{\exp L^p(\R^N)} \leqslant\varepsilon,$ there exists a weak-mild solution $u\in L^{\infty}(0,\infty;\exp L^p(\R^N))$
of the Cauchy problem \eqref{1.1} satisfying
\begin{equation} \label{1.8}
    \lim_{t\longrightarrow0}\|u(t)-{\rm e}^{t\Delta}u_{0}\|_{\exp L^p(\R^N)}=0.
\end{equation}
Moreover, if $m>3/2$ then there exists a constant $C>0$ such that,
\begin{equation}
\label{Linfinibihar}
\|u(t)\|_a \leq\,C\,t^{-\sigma},\quad\forall\;t>0,
\end{equation}
where $${N(m-1)\over 2}<a<{N(m-1)\over 2}\frac{1}{(2-m)_+},\; a>N/2,\; \quad\mbox{and}\quad \sigma={1\over m-1}-{N\over 2a}>0\,.$$
\end{theorem}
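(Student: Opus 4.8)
The plan is to solve the integral equation \eqref{integral} by a contraction argument in a complete metric space that simultaneously encodes the uniform $\exp L^p$‑bound and the $L^a$‑decay. Set $\Phi(u)(t):={\rm e}^{t\Delta}u_0+\int_0^t{\rm e}^{(t-s)\Delta}f(u(s))\,ds$ and work in
\[
Z:=\Big\{u\in L^\infty(0,\infty;\exp L^p(\R^N)):\ \|u\|_Z:=\sup_{t>0}\|u(t)\|_{\exp L^p(\R^N)}+\sup_{t>0}t^{\sigma}\|u(t)\|_{L^a(\R^N)}<\infty\Big\},
\]
on a small ball $\|u\|_Z\le 2\eta$, with $\|u_0\|_{\exp L^p(\R^N)}\le\varepsilon:=\eta/C_0$. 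The linear ingredients I would use are: (i) ${\rm e}^{t\Delta}$ is a contraction on $\exp L^p(\R^N)$; (ii) the embedding $\exp L^p(\R^N)\hookrightarrow L^q(\R^N)$ for $q\ge p$ with $\|v\|_{L^q}\le Cq^{1/p}\|v\|_{\exp L^p(\R^N)}$, together with the elementary bound $\|h\|_{\exp L^p(\R^N)}\le C(\|h\|_{L^p}+\|h\|_{L^\infty})$; and (iii) the usual smoothing $\|{\rm e}^{\tau\Delta}g\|_{L^q}\le C\tau^{-\frac N2(\frac1r-\frac1q)}\|g\|_{L^r}$ for $r\le q$. For the linear part one gets $\|{\rm e}^{t\Delta}u_0\|_{\exp L^p(\R^N)}\le\varepsilon$ and $\|{\rm e}^{t\Delta}u_0\|_{L^a}\le\min\big(Ca^{1/p}\varepsilon,\ C\varepsilon t^{-\frac N2(\frac1p-\frac1a)}\big)$; since $\sigma<\frac N2(\frac1p-\frac1a)$ (because $p\le\frac{N(m-1)}2$) this yields $\sup_t t^{\sigma}\|{\rm e}^{t\Delta}u_0\|_{L^a}\le C\varepsilon$, which also tends to $0$ as $t\to0$.

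The nonlinear estimate is the heart of the matter. From \eqref{1.4} with $v=0$ one has $|f(u)|\le C|u|^m+C|u|^m\big({\rm e}^{\lambda|u|^p}-1\big)$; the first term gives $\big\||u(s)|^m\big\|_{L^r}=\|u(s)\|_{L^{mr}}^m$, and for the second I split by H\"older, $\frac1r=\frac1{r_1}+\frac1{r_2}$, writing $\big\||u|^m({\rm e}^{\lambda|u|^p}-1)\big\|_{L^r}\le\|u(s)\|_{L^{mr_1}}^{m}\,\big\|{\rm e}^{\lambda|u(s)|^p}-1\big\|_{L^{r_2}}$ and using $({\rm e}^x-1)^{r_2}\le {\rm e}^{r_2x}-1$ to get $\big\|{\rm e}^{\lambda|u(s)|^p}-1\big\|_{L^{r_2}}\le1$ whenever $\lambda r_2\|u(s)\|_{\exp L^p(\R^N)}^p\le1$ — a condition secured by \emph{first} fixing $r_2$ as large as needed and \emph{then} choosing $\eta$ (hence $\varepsilon$) small. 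The remaining norms $\|u(s)\|_{L^q}$ are controlled by interpolating between $\|u(s)\|_{L^p}\le C\eta$ (and, for very large $q$, the embedding bound $\|u(s)\|_{L^{q'}}\le C(q')^{1/p}\eta$) and the weighted bound $\|u(s)\|_{L^a}\le s^{-\sigma}\|u\|_Z$, which gives $\|u(s)\|_{L^q}\le C(q)\,s^{-\sigma\theta(q)}\|u\|_Z^{\theta(q)}\eta^{1-\theta(q)}$ with $\theta(q)\in(0,1]$, $\theta(a)=1$. Hence $\|f(u(s))\|_{L^r}$ is a finite sum of terms $s^{-\gamma}$ times powers of $\eta$ and $\|u\|_Z$, with decay rates $\gamma$ pushed arbitrarily close to $\sigma m$ by taking $r$ close to $a/m$ and $r_2$ large; the same scheme applied with \eqref{1.4} bounds $|f(u(s))-f(v(s))|$ by $|u(s)-v(s)|$ times analogous weights.

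Inserting these bounds into the Duhamel term I would split $\int_0^t=\int_0^{t/2}+\int_{t/2}^t$ and treat $t\le1$, $t>1$ separately. Near the diagonal ($s\in(t/2,t)$, $t-s$ small) I take $r$ slightly below $a/m$ (with $r_1=a/m$ in the H\"older split) so that $\frac N2(\frac1r-\frac1a)<1$, and the resulting Beta‑type integral reproduces the self‑similar rate: $\int_{t/2}^t(t-s)^{-\frac N2(\frac1r-\frac1a)}s^{-\sigma m}\,ds\lesssim t^{\,1-\frac N2(\frac1r-\frac1a)-\sigma m}\lesssim t^{-\sigma}$, using $1-\frac{N(m-1)}{2a}=\sigma(m-1)$; for the $L^\infty$‑part (feeding the $\exp L^p$‑bound via (ii)) one uses $r>N/2$, which is allowed since $p/m<N/2$. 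Off the diagonal ($s\in(0,t/2)$, $\tau=t-s\sim t$ large) I take a small exponent $r\in[\max(1,p/m),r_\flat]$, $r_\flat=\tfrac{Np}{N+2p}$, for which $\frac N2(\frac1r-\frac1p)\ge1$ and $\frac N{2r}\ge1$ — and here the strict inequality $p<\frac{N(m-1)}2$ (a consequence of $m\ge p$ and $N(p-1)/2>p$) is exactly what makes these thresholds hold — giving a contribution $\lesssim\eta^m t^{1-\frac N2(\frac1r-\frac1p)}$ (resp.\ with $\frac N{2r}$), bounded and $\lesssim t^{-\sigma}$ after multiplying by $t^\sigma$. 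The hypothesis $m>3/2$ enters only to make the window $\frac{N(m-1)}2<a<\frac{N(m-1)}{2(2-m)_+}$, $a>N/2$, non‑empty. Repeating the estimates for $\Phi(u)-\Phi(v)$ yields $\|\Phi(u)-\Phi(v)\|_Z\le\frac12\|u-v\|_Z$ on the ball for $\eta$ small, so Banach's theorem gives $u\in Z$ solving \eqref{integral} for all $t>0$; the Duhamel estimates also give $\|u(t)-{\rm e}^{t\Delta}u_0\|_{\exp L^p(\R^N)}\to0$ as $t\to0$, hence \eqref{1.8} and the weak$^*$ convergence $u(t)\to u_0$, while $\|u(t)\|_{L^a}\le\|u\|_Z\,t^{-\sigma}$ is \eqref{Linfinibihar} (and the same argument applies to any other admissible $a$). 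The main obstacle I anticipate is precisely the bookkeeping that makes the uniform $\exp L^p$‑bound and the $L^a$‑decay close at once — in particular the exponential part of $f$, which forces working at Lebesgue exponents slightly off the critical ones and balancing the near‑diagonal/off‑diagonal splitting against the constraints $p<\frac{N(m-1)}2$ and $a>N/2$.
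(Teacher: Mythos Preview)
Your overall strategy --- fixed point in a ball of $\{u:\sup_t\|u(t)\|_{\exp L^p}+\sup_t t^\sigma\|u(t)\|_{L^a}<\infty\}$ --- coincides with the paper's, and your linear estimates are the same. The execution on the Duhamel term, however, differs in two respects.

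\textbf{Uniform $\exp L^p$ bound.} The paper does \emph{not} go through $\|\cdot\|_{\exp L^p}\lesssim\|\cdot\|_{L^p}+\|\cdot\|_{L^\infty}$ and a near/off--diagonal splitting. Instead it exploits the hypothesis $N(p-1)/2>p$ via a single integrable-in-time kernel (their Corollary~3.3): for $r>N/2$,
\[
\kappa(t)=\tfrac{1}{(\log 2)^{1/p}}\min\Big\{t^{-N/(2r)}+1,\ t^{-N/2}\big(\log(t^{-N/2}+1)\big)^{-1/p}\Big\}\in L^1(0,\infty),
\]
so that $\big\|\int_0^t e^{(t-s)\Delta}f(u(s))\,ds\big\|_{\exp L^p}\le\|\kappa\|_{L^1}\,\|f(u)\|_{L^\infty(0,\infty;L^1\cap L^r)}\le CM^m$ in one stroke, using only $\|f(u(s))\|_{L^r}\le CM^m$ for $r\ge1$. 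Your route can be made to work but is more delicate; in particular your write-up does not treat the near--diagonal $L^p$ part for $t>1$ (one has to smooth $L^r\to L^p$ with $r<p$ and interpolate $\|u(s)\|_{L^{mr}}$ between $L^p$ and $L^a$ with a carefully chosen weight $\mu$ so that both $\tfrac N2(\tfrac1r-\tfrac1p)<1$ and $1-\tfrac N2(\tfrac1r-\tfrac1p)-\sigma m\mu\le0$ hold).

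\textbf{$L^a$ decay.} Here the paper takes a genuinely different path: it Taylor--expands $e^{\lambda|u|^p}=\sum_k\frac{\lambda^k}{k!}|u|^{pk}$ and, for each $k$, selects interpolation parameters $(\theta_k,\rho_k)$ through a dedicated lemma (their Lemma~2.8) so that the time integral becomes an \emph{exact} Beta integral (exponent zero in $t$), then verifies $\Gamma(\rho_k/p+1)^{(pk+m-1)(1-\theta_k)/\rho_k}\le C^k k!$ so that the series $\sum_k(C\lambda M^p)^k$ converges. Your alternative --- bound $\|e^{\lambda|u|^p}-1\|_{L^{r_2}}\le1$ once and for all and absorb the resulting exponent shift $N/(2r_2)$ by splitting $t\le1/t>1$ and near/off--diagonal --- is more elementary (no parameter lemma, no $\Gamma$--function bookkeeping) and does close, but only after the case analysis. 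The paper's argument is cleaner and scale-invariant in $t$; yours stays closer to the standard pure--power Fujita--Weissler scheme.

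A minor structural point: the paper endows its ball only with the metric $d(u,v)=\sup_t t^\sigma\|u(t)-v(t)\|_{L^a}$ and recovers the $\exp L^p$ bound on the fixed point by lower semicontinuity (Fatou). This spares them from proving the $\exp L^p$ contraction, which you would need to supply.
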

\begin{remarks}\quad\\
\vspace{-0.5cm}
{\rm 
\begin{itemize}
\item[(i)] The case $N(p-1)/2\,\leq\, p$ will be investigated in a forthcoming paper.
\item[(ii)] Note that in the proof of the decay estimates, we require $a>N/2$ which is compatible with the other assumptions only if we impose the additional condition  $m>3/2.$
\item[(iii)] If only we want to prove global existence, we change the space of contraction that is we omit the Lebesgue part and we do not need such a supplementary condition on $m$.
\end{itemize}
}
\end{remarks}

Hereafter, $\|\cdot\|_r$ denotes the norm in the Lebesgue space $L^r(\R^N),\; 1\leq r\leq \infty.$ We mention that the assumption for the nonlinearity covers the cases
$$
f(u)=\pm |u|^{m-1}u{\rm e}^{|u|^p}, \quad m\geq 1+{2p\over N}.
$$
The global existence part of Theorem \ref{GE} is known for $p=2$ (see \cite{Ioku}). The estimate \eqref{Linfinibihar} was obtained in \cite{Ioku} for $p=2$ and $m=1+{4\over N}.$ This is improved in \cite{MOT} for $p=2$ and any $m\geq 1+{4\over N}.$ The fact  that estimate \eqref{Linfinibihar} depends on the smallest power of the nonlinearity $f(u)$ is known in \cite{STW} but only for nonlinearities having polynomial growth.

Using similar arguments as in \cite{Indiana}, we can show the following lower estimate of the blow-up rate.
\begin{theorem}[{Blow-up rate}]
\label{BLOW}
Assume that the nonlinearity  $f$ satisfies \eqref{1.9} with $\lambda>0$. Let $u_0\in L^p(\R^N)\cap L^\infty(\R^N)$ and $u\in C([0, T_{\max});\, L^p\cap L^\infty)$ be the maximal solution of \eqref{1.1}. If $T_{\max}<\infty$, then there exist two positive constants $C_1,\; C_2$ such that
$$
\lambda\|u(t)\|^p_{L^p\cap L^\infty}\geq  C_1\big|\log(T_{\max}-t)|+ C_2,\quad 0\leq t<T_{\max}\,.
$$
\end{theorem}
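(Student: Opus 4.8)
The plan is to exploit the variation-of-constants formula together with the smoothing properties of the heat semigroup on $L^p\cap L^\infty$, following the scheme of \cite{Indiana}. First I would set $M(t):=\|u(t)\|_{L^p\cap L^\infty}$ and note that, from the growth condition \eqref{1.9} with $f(0)=0$, one gets the pointwise bound $|f(u)|\le C|u|\,{\rm e}^{\lambda|u|^p}$, hence in both $L^p$ and $L^\infty$ one has $\|f(u(s))\|_{L^p\cap L^\infty}\le C\,M(s)\,{\rm e}^{\lambda M(s)^p}$. Applying $\|{\rm e}^{(t-s)\Delta}\|_{L^r\to L^r}\le 1$ to the integral equation \eqref{integral} gives, for $0\le t<t'<T_{\max}$,
\begin{equation*}
M(t')\le M(t)+C\int_t^{t'} M(s)\,{\rm e}^{\lambda M(s)^p}\,ds .
\end{equation*}

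Next I would argue that $M(t)\to\infty$ as $t\to T_{\max}^-$: this is the standard blow-up alternative in $L^p\cap L^\infty$, which follows because \eqref{1.1} is locally well-posed in $L^p\cap L^\infty$ with existence time bounded below by a decreasing function of the norm of the data (a Banach fixed point argument on $L^\infty(0,\tau;L^p\cap L^\infty)$ using the same Lipschitz estimate), so a finite-norm limit would allow continuation past $T_{\max}$. Fix now $t$ close to $T_{\max}$ and let $t'\nearrow T_{\max}$; since $M$ is (lower semi)continuous and blows up, for $t$ large enough we may compare $M(s)$ on $[t,t')$ with its supremum. The cleanest route is a differential-inequality form: for a.e. $s$, $M'(s)\le C M(s){\rm e}^{\lambda M(s)^p}$, so $\frac{d}{ds}\big(G(M(s))\big)\le C$ where $G'(\tau)=\big(\tau\,{\rm e}^{\lambda\tau^p}\big)^{-1}$. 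Integrating from $t$ to $T_{\max}$ and using $G(M(t'))\to G(\infty)<\infty$ (the integral $\int^\infty (\tau\,{\rm e}^{\lambda\tau^p})^{-1}d\tau$ converges) yields $G(\infty)-G(M(t))\le C(T_{\max}-t)$, i.e. $\int_{M(t)}^{\infty}\frac{d\tau}{\tau\,{\rm e}^{\lambda\tau^p}}\le C(T_{\max}-t)$. Finally, a Laplace-type lower bound on the tail integral, $\int_{R}^{\infty}\frac{d\tau}{\tau\,{\rm e}^{\lambda\tau^p}}\ge c\,R^{-p}\,{\rm e}^{-\lambda R^p}$ for $R$ large, gives ${\rm e}^{-\lambda M(t)^p}\lesssim (T_{\max}-t)\,M(t)^{p}$; taking logarithms and absorbing the polynomial factor produces $\lambda M(t)^p\ge \log\frac{1}{T_{\max}-t}-p\log M(t)+O(1)$, and since $\log M(t)=o(M(t)^p)$ the claimed estimate $\lambda\|u(t)\|_{L^p\cap L^\infty}^p\ge C_1|\log(T_{\max}-t)|+C_2$ follows after adjusting constants.

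The main obstacle is making the step from the integral inequality to a genuine differential inequality rigorous, since a priori $M(t)=\|u(t)\|_{L^p\cap L^\infty}$ is only continuous, not differentiable; the honest way is to stay with the integral inequality $M(t')-M(t)\le C\int_t^{t'} M(s){\rm e}^{\lambda M(s)^p}ds$, set $R=M(t)$, and use monotonicity of $\tau\mapsto \tau{\rm e}^{\lambda\tau^p}$ together with the fact that on a short interval to the right of $t$ one has $M(s)\le 2M(t)$ (by continuity), then iterate/bootstrap toward $T_{\max}$ — equivalently, one shows directly that if $M(t)=R$ then $T_{\max}-t\ge c\,R^{-p}{\rm e}^{-\lambda R^p}$ by running the local existence time estimate from time $t$. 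A secondary technical point is the convergence and asymptotics of $\int_R^\infty(\tau\,{\rm e}^{\lambda\tau^p})^{-1}d\tau$, but this is a routine computation (substitute $\sigma=\tau^p$ and integrate by parts once). Everything else — the contraction mapping for local well-posedness in $L^p\cap L^\infty$ and the blow-up alternative — is standard and parallels \cite{Indiana}.
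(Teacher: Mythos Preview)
Your argument is correct, and in fact the fallback you flag at the end as the ``honest way'' --- running the local-existence time estimate from time $t$ --- is precisely the route the paper takes, bypassing the Osgood machinery altogether. The paper's proof is essentially two lines: from the fixed-point construction (Proposition~\ref{C0}) one knows the solution can be continued past time $T$ whenever $\|u(t)\|_{L^p\cap L^\infty}+2CM\,{\rm e}^{\lambda M^p}(T-t)\le M$ for some $M>\|u(t)\|_{L^p\cap L^\infty}$; hence at $T=T_{\max}$ this must fail for every $M$, and the choice $M=2\|u(t)\|_{L^p\cap L^\infty}$ gives ${\rm e}^{2^{p}\lambda\|u(t)\|^p_{L^p\cap L^\infty}}\ge c\,(T_{\max}-t)^{-1}$ immediately. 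No differential inequality, no ODE comparison, and no asymptotics of $\int_R^\infty(\tau\,{\rm e}^{\lambda\tau^p})^{-1}\,d\tau$ are needed.

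Your primary route via the integral inequality is a genuinely different and somewhat longer argument. It is valid once the heuristic step ``$M'\le CM\,{\rm e}^{\lambda M^p}$ a.e.'' is replaced by a Bihari-type comparison: the inequality $M(t')\le M(t)+C\int_t^{t'}\phi(M(s))\,ds$ with $\phi(\tau)=\tau\,{\rm e}^{\lambda\tau^p}$ locally Lipschitz forces $M\le y$ on the lifespan of the ODE solution $y'=C\phi(y)$, $y(t)=M(t)$, so blow-up of $M$ at $T_{\max}$ yields $T_{\max}-t\ge C^{-1}\int_{M(t)}^\infty\phi(\tau)^{-1}\,d\tau$, and your tail estimate finishes. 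This buys a marginally sharper exponent ($\lambda$ versus $2^p\lambda$) at the cost of the extra machinery; the paper trades that sharpness for brevity. A minor point: the lower bound you quote from the local-existence argument, $T_{\max}-t\ge cR^{-p}{\rm e}^{-\lambda R^p}$, is really the one produced by the tail integral; the fixed-point estimate itself gives the cruder $T_{\max}-t\ge c\,{\rm e}^{-2^p\lambda R^p}$, which is already sufficient.
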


See \cite{ST} and references therein for similar blow-up estimates for parabolic problems with exponential nonlinearities.

The rest of this paper is organized as follows. In the next section, we collect some basic facts and useful tools about Orlicz spaces.  Section 3 is devoted to some crucial estimates on the linear heat semi-group. The sketches of the proofs of Theorems \ref{local} and \ref{BLOW} are done in Section 4. Section 5 is devoted to Theorem \ref{nonex} about nonexistence. Finally, in section 6 we give the proof of Theorem \ref{GE}. In all this paper, $C$ will be a positive constant which  may have different values at different places. Also, $L^r(\R^N)$, $\exp L^r(\R^N)$, $\exp L^r_0(\R^N)$ will be written respectively $L^r$, $\exp L^r$ and $\exp L^r_0$.

\section{Orlicz spaces: basic facts and useful tools}
Let us recall the definition of the so-called Orlicz spaces on $\R^N$ and some
related basic facts. For a complete presentation and more details, we refer the reader to
\cite{Adams, RR, Trud}.

\begin{definition}\label{deforl}\quad\\
Let $\phi : \R^+\to\R^+$ be a convex increasing function such that
$$
\phi(0)=0=\lim_{s\to 0^+}\,\phi(s),\quad
\lim_{s\to\infty}\,\phi(s)=\infty.
$$
We say that a  function $u\in L^1_{loc}(\R^N)$ belongs to
$L^\phi(\R^N)$ if there exists $\lambda>0$ such that
$$
\int_{\R^N}\,\phi\left(\frac{|u(x)|}{\lambda}\right)\,dx<\infty.
$$
We denote then
\begin{equation}
\label{Luxemb}
\|u\|_{L^\phi}=\inf\,\left\{\,\lambda>0,\quad\int_{\R^N}\,\phi\left(\frac{|u(x)|}{\lambda}\right)\,dx\leq
1\,\right\}.
\end{equation}
\end{definition}
It is known that $\left(L^\phi(\R^N),\|\cdot\|_{L^\phi}\right)$ is a Banach space. Note that, if $\phi(s)=s^p,\, 1\leq
p<\infty$,  then $L^\phi$ is nothing else than the  Lebesgue space $L^p$. Moreover, for $u\in L^\phi$ with $K:=\|u\|_{L^\phi}>0$,
we have
$$\left\{\,\lambda>0,\quad\int_{\R^N}\,\phi\left(\frac{|u(x)|}{\lambda}\right)\,dx\leq
1\,\right\}=[K, \infty[\,. $$
In particular
\begin{equation}
\label{med1}
\int_{\R^N}\,\phi\left(\frac{|u(x)|}{\|u\|_{L^\phi}}\right)\,dx\leq 1.
\end{equation}
We also recall the following well known properties.
\begin{proposition} \label{fatou} We have
\begin{itemize}
\item[(i)] $L^1\cap L^\infty\subset L^\phi(\R^N)\subset L^1+L^\infty$.
\item[(ii)] {\it Lower semi-continuity}:
$$
u_n\to u\quad\mbox{a.e.}\quad\Longrightarrow\quad\|u\|_{L^\phi}\leq
\liminf\|u_n\|_{L^\phi}.
$$
\item[(iii)] {\it Monotonicity}:
$$
|u|\leq|v|\quad\mbox{
a.e.}\quad\Longrightarrow\quad\|u\|_{L^\phi}\leq\|v\|_{L^\phi}.
$$
\item[(iv)] {\it Strong Fatou property}:
$$
0\leq u_n\nearrow u\quad\mbox{
a.e.}\quad\Longrightarrow\quad\|u_n\|_{L^\phi}\nearrow\|u\|_{L^\phi}.
$$
\item[(v)] {\it Strong and modular convergence}:
$$
u_n\to u \;\;\mbox{in}\;\; L^\phi \;\;\Longrightarrow\;\; \int_{\R^N}\phi(u_n-u) dx\to 0 .
$$
\end{itemize}
\end{proposition}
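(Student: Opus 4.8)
The plan is to derive all five items from the definition of the Luxemburg norm \eqref{Luxemb} together with two elementary consequences of convexity and the normalization \eqref{med1}. First I would record the two facts I will use repeatedly: since $\phi$ is convex with $\phi(0)=0$, the map $s\mapsto \phi(s)/s$ is non-decreasing on $(0,\infty)$; hence $\phi(ts)\leq t\,\phi(s)$ for all $t\in[0,1]$ and $s\geq0$ (sub-homogeneity on the unit interval), and $\phi(s)\geq \phi(1)\,s$ for all $s\geq1$ (super-linearity at infinity, with $\phi(1)>0$ since $\phi$ is increasing and $\phi(0)=0$). These two inequalities, applied with $s=|u(x)|/\lambda$ for a suitable $\lambda$, reduce every assertion to a comparison of integrals.

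I would dispose of the monotonicity (iii) first, as it is immediate: if $|u|\leq|v|$ a.e., then $\phi(|u|/\lambda)\leq\phi(|v|/\lambda)$ for every $\lambda>0$ because $\phi$ is increasing, so the set of admissible $\lambda$ for $v$ is contained in that for $u$, whence $\|u\|_{L^\phi}\leq\|v\|_{L^\phi}$. For the lower semi-continuity (ii), assuming $L:=\liminf\|u_n\|_{L^\phi}<\infty$, I pass to a subsequence realizing $L$; for any fixed $\lambda>L$ we eventually have $\|u_{n_k}\|_{L^\phi}<\lambda$, so $\int\phi(|u_{n_k}|/\lambda)\,dx\leq1$ by \eqref{med1} and the description of the admissible set as $[\,\|u_{n_k}\|_{L^\phi},\infty)$. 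Since $u_{n_k}\to u$ a.e. and $\phi$ is continuous, the classical Fatou lemma gives $\int\phi(|u|/\lambda)\,dx\leq1$, i.e. $\|u\|_{L^\phi}\leq\lambda$; letting $\lambda\downarrow L$ yields (ii). The strong Fatou property (iv) is then a formal consequence: for $0\leq u_n\nearrow u$, (iii) shows that $\|u_n\|_{L^\phi}$ is non-decreasing with $\|u_n\|_{L^\phi}\leq\|u\|_{L^\phi}$, while (ii) gives $\|u\|_{L^\phi}\leq\liminf\|u_n\|_{L^\phi}$, and the two bounds force $\|u_n\|_{L^\phi}\nearrow\|u\|_{L^\phi}$.

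For the inclusions (i), the easy direction uses sub-homogeneity: if $u\in L^1\cap L^\infty$ and $\lambda\geq\|u\|_\infty$, then $|u|/\lambda\leq1$ a.e., so $\phi(|u|/\lambda)\leq(|u|/\lambda)\,\phi(1)$ and $\int\phi(|u|/\lambda)\,dx\leq(\phi(1)/\lambda)\|u\|_1<\infty$, giving $u\in L^\phi$. For the reverse inclusion $L^\phi\subset L^1+L^\infty$, I fix $\lambda$ with $\int\phi(|u|/\lambda)\,dx<\infty$ and split $u=u\,\mathbf 1_{\{|u|\leq\lambda\}}+u\,\mathbf 1_{\{|u|>\lambda\}}$; the first term lies in $L^\infty$, and on $\{|u|>\lambda\}$ super-linearity gives $|u|/\lambda\leq\phi(|u|/\lambda)/\phi(1)$, so the second term lies in $L^1$. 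Finally, the modular convergence (v) is where the argument is least mechanical and is the step I expect to require the most care. Writing $w_n=u_n-u$ with $\|w_n\|_{L^\phi}\to0$, for $n$ large so that $0<\|w_n\|_{L^\phi}\leq1$ I apply sub-homogeneity with $t=\|w_n\|_{L^\phi}$ and $s=|w_n|/\|w_n\|_{L^\phi}$ to obtain
$$
\int_{\R^N}\phi(|w_n|)\,dx\leq\|w_n\|_{L^\phi}\int_{\R^N}\phi\!\left(\frac{|w_n|}{\|w_n\|_{L^\phi}}\right)dx\leq\|w_n\|_{L^\phi},
$$
where the last inequality is exactly \eqref{med1}; the right-hand side tends to $0$ (the trivial case $\|w_n\|_{L^\phi}=0$ giving $w_n=0$ a.e.), which proves (v). The only genuine subtlety throughout is the book-keeping that turns the qualitative hypotheses into the admissible-$\lambda$ language of \eqref{Luxemb} and \eqref{med1}; once the two convexity inequalities are in hand, each item is short.
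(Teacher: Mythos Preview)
Your proof is correct and self-contained. The paper, however, does not give a proof of this proposition at all: it introduces it with the sentence ``We also recall the following well known properties'' and states the five items without argument, referring generally to the standard Orlicz-space literature \cite{Adams, RR, Trud}. So there is no ``paper's own proof'' to compare against; you have supplied what the authors deliberately omitted.

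Your argument is the standard one. The two convexity consequences you isolate --- $\phi(ts)\le t\,\phi(s)$ for $t\in[0,1]$ and $\phi(s)\ge\phi(1)\,s$ for $s\ge 1$ --- are exactly what is needed, and your reduction of each item to an admissible-$\lambda$ statement via \eqref{Luxemb} and \eqref{med1} is clean. The only place worth a second look is (v): your inequality $\int\phi(|w_n|)\,dx\le\|w_n\|_{L^\phi}$ is precisely the argument the paper itself uses later, in the proof of Lemma~\ref{Gen1}, where the same sub-homogeneity trick (phrased there as ${\rm e}^{\theta s}-1\le\theta({\rm e}^s-1)$ for $0\le\theta\le1$) together with \eqref{med1} appears. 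So your approach to (v) is in fact fully aligned with the techniques the paper employs elsewhere.
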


Denote by
$$
L_0^\phi(\R^N)=\Big\{\,u\in L^1_{loc}(\R^N),\;\;\; \int_{\R^N}\,\phi\left(\frac{|u(x)|}{\lambda}\right)\,dx<\infty,\;\forall\;\;\lambda>0\Big\}\,.
$$

It can be shown (see for example \cite{IRT}) that
$$
 L_0^\phi(\R^N)=\overline{C_0^\infty(\R^N)}^{L^\phi}= \mbox{the colsure of $C_0^\infty(\R^N)$ in $L^\phi(\R^N)$}.
 $$
Clearly $L_0^\phi(\R^N)=L^\phi(\R^N)$ for $\phi(s)=s^p, \,p\geq 1$, but this is not the case for any $\phi$ (see \cite{IRT}). When $\phi(s)={\rm e}^{s^p}-1$, we denote the space $L^\phi(\R^N)$ by $\exp L^p$ and $L^\phi_0(\R^N)$ by $\exp L^p_0$.


{ The following Lemma summarize the relationship between Orlicz and Lebesgue spaces.
\begin{lemma}\label{Orl-Leb}  We have
\begin{itemize}
\item[(i)] $\EPO\varsubsetneq \EP ,$\; $p\geq 1$.
\item[(ii)] $\EPO\not\hookrightarrow L^\infty$, hence\;$\EP\not\hookrightarrow L^\infty,$ \; $p\geq 1$.
\item[(iii)] $\EP \not\hookrightarrow L^r$,\;\; for all\;\;$1\leq r<p,$ \; $p>1$.
\item[(iv)] $L^q\cap L^\infty\hookrightarrow \EPO$,\;\; for all\;\; $1\leq q\leq p$. More precisely
\end{itemize}
\begin{equation}
\label{embed}
\|u\|_{\exp L^p}\leq \frac{1}{\left(\log 2\right)^{{1\over p}}}\biggr(\|u\|_{q}+\|u\|_{\infty}\biggl)\,.
\end{equation}
\end{lemma}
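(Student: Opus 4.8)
The plan is to establish the four assertions in order, with (iv) being the substantive one and (i)--(iii) mostly following from the characterization \eqref{exp L0} of $\EPO$ together with explicit test functions. For (i), the inclusion $\EPO\subset\EP$ is immediate from the definitions: if the modular integral is finite for \emph{every} $\lambda>0$ it is in particular finite for \emph{some} $\lambda>0$. To see the inclusion is strict, I would exhibit a function lying in $\EP\setminus\EPO$; the natural candidate is the function $\Phi_\alpha$ from \eqref{ID} (with $N=1$, say, or radially in $\R^N$), for which $\int(\mathrm{e}^{|\Phi_\alpha|^p/\lambda^p}-1)\,dx$ converges precisely when $\lambda\geq\alpha$ (a direct computation: near the origin $\mathrm{e}^{|\Phi_\alpha(x)|^p/\lambda^p}=|x|^{-\alpha^p/\lambda^p}$, integrable on the unit ball iff $\alpha^p/\lambda^p<N$, so after adjusting the constant one gets membership in $\EP$ but failure of the ``for every $\lambda$'' condition, hence $\Phi_\alpha\notin\EPO$). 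This single example also settles (iii): for $\lambda<\alpha$ close to $\alpha$ the power $|x|^{-\alpha^p/\lambda^p}$ is non-integrable, and by tuning one checks $\Phi_\alpha\in L^r$ fails for $r<p$ while $\Phi_\alpha\in\EP$; alternatively $|\log|x||^{1/p}\in L^r_{loc}$ for all finite $r$, so one instead rescales — the cleaner route is to take $u(x)=|x|^{-N/p}(\text{cutoff near }0\text{ and }\infty$, with a logarithmic correction$)$ which sits in $\EP$ but in no $L^r$, $r<p$; I would present whichever of these is shortest. For (ii), note $\Phi_\alpha$ (or any unbounded locally-$L^\infty$ truncation-limit built from $\log$) lies in $\EPO$ by \eqref{exp L0} yet is unbounded, giving $\EPO\not\hookrightarrow L^\infty$, and then (i) upgrades this to $\EP\not\hookrightarrow L^\infty$.

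The heart of the lemma is (iv) and the quantitative bound \eqref{embed}. Fix $u\in L^q\cap L^\infty$ with $1\leq q\leq p$, set $M=\|u\|_\infty$ and $\lambda=\frac{1}{(\log 2)^{1/p}}\bigl(\|u\|_q+M\bigr)$; I must show $\int_{\R^N}\bigl(\mathrm{e}^{|u(x)|^p/\lambda^p}-1\bigr)\,dx\leq 1$. The key elementary inequality is that for $0\leq t\leq 1$ one has $\mathrm{e}^{t}-1\leq t\,(\mathrm{e}-1)\leq t$ fails ($\mathrm{e}-1>1$), so instead I use the sharper convexity fact $\mathrm{e}^{t}-1\leq (2^{?}\dots)$ — more precisely, the clean statement: for any $s\geq 0$ with $|u(x)|/\lambda\leq 1$ (which holds since $|u(x)|\le M\le \lambda(\log 2)^{1/p}\le\lambda$), we have $|u(x)|^p/\lambda^p\le\log 2$, and on $[0,\log 2]$ the function $\tau\mapsto\mathrm{e}^\tau-1$ is bounded by $\tau\cdot\frac{2^{1}-1}{\log 2}=\tau/\log 2$ by convexity (the chord from $0$ to $\log 2$). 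Hence
\begin{equation*}
\int_{\R^N}\Bigl(\mathrm{e}^{|u(x)|^p/\lambda^p}-1\Bigr)\,dx\;\leq\;\frac{1}{\log 2}\int_{\R^N}\frac{|u(x)|^p}{\lambda^p}\,dx\;=\;\frac{1}{\lambda^p\log 2}\,\|u\|_p^p.
\end{equation*}
Now I bound $\|u\|_p^p=\int|u|^p=\int|u|^{p-q}|u|^q\leq M^{p-q}\|u\|_q^q\leq(\|u\|_q+M)^p$ using $M^{p-q}\|u\|_q^q\le(\|u\|_q+M)^{p-q}(\|u\|_q+M)^q$. Therefore the right-hand side is at most $\frac{(\|u\|_q+M)^p}{\lambda^p\log 2}=1$ by the choice of $\lambda$, which is exactly \eqref{embed} after taking the infimum defining the Luxemburg norm. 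The inclusion $L^q\cap L^\infty\hookrightarrow\EPO$ (not merely into $\EP$) then follows because the same computation with $\lambda$ replaced by any $\mu>0$ shows the modular is finite (one splits: where $|u|\le\mu$ use the chord bound, where $|u|>\mu$ — a set of finite measure since $u\in L^q$ — use that $u$ is bounded there), so the ``for every $\lambda$'' criterion \eqref{exp L0} holds.

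The main obstacle, such as it is, is getting the \emph{sharp} constant $(\log 2)^{-1/p}$ rather than some larger value: this forces the specific choice of threshold $\log 2$ (so that $\mathrm{e}^{\log 2}-1=1$ and the chord has slope exactly $1/\log 2$) and the splitting of the exponent $|u|^p=|u|^{p-q}|u|^q$ must be done without losing constants, which is why $q\le p$ and the $L^\infty$ bound are both essential. A secondary point to handle carefully is that \eqref{embed} is an inequality for $\|u\|_{\exp L^p}$, so I should remark that if $u\equiv 0$ it is trivial, and otherwise $\lambda>0$ and the computation above shows $\lambda$ belongs to the set over which the infimum in \eqref{Luxemb} is taken. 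I would also double-check the degenerate case $q=p$, where the bound reads $\|u\|_{\exp L^p}\le(\log 2)^{-1/p}(\|u\|_p+\|u\|_\infty)$ and the argument goes through verbatim. Assertions (i)--(iii) I expect to dispatch in a few lines each once the right explicit function is on the table; no genuine difficulty there beyond bookkeeping with the power $\alpha^p/\lambda^p$ versus $N$.
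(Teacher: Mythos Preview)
Your argument for (iv) is correct and takes a genuinely different route from the paper. The paper expands $\mathrm{e}^{|u|^p/\alpha^p}-1$ as a Taylor series $\sum_{k\geq 1}\frac{1}{k!\alpha^{pk}}\|u\|_{pk}^{pk}$, bounds each $\|u\|_{pk}$ by $\|u\|_q+\|u\|_\infty$ via interpolation, and re-sums to get $\mathrm{e}^{(\|u\|_q+\|u\|_\infty)^p/\alpha^p}-1$, which is $\leq 1$ exactly when $\alpha\geq(\log 2)^{-1/p}(\|u\|_q+\|u\|_\infty)$. Your chord bound $\mathrm{e}^\tau-1\leq\tau/\log 2$ on $[0,\log 2]$ combined with the single interpolation $\|u\|_p^p\leq\|u\|_\infty^{p-q}\|u\|_q^q$ is more elementary and avoids the infinite series entirely; both land on the same sharp constant because both exploit that $\mathrm{e}^{\log 2}-1=1$.

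There is, however, a genuine gap in your treatment of (ii). You write that $\Phi_\alpha$ lies in $\EPO$ and is unbounded, but you have just argued in (i) that $\Phi_\alpha\notin\EPO$ --- and that is correct: the modular $\int(\mathrm{e}^{|\Phi_\alpha|^p/\lambda^p}-1)\,dx=\int_{|x|<1}(|x|^{-\alpha^p/\lambda^p}-1)\,dx$ diverges whenever $\lambda\leq\alpha N^{-1/p}$, so the ``for every $\lambda$'' criterion \eqref{exp L0} fails. You need an unbounded function whose growth is slow enough that the modular converges for \emph{every} $\lambda$; the paper takes $u(x)=\bigl(\log(1-\log|x|)\bigr)^{1/p}$ on the unit ball, for which $\mathrm{e}^{|u|^p/\lambda^p}=(1-\log|x|)^{1/\lambda^p}$ is integrable near $0$ for all $\lambda>0$. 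Your parenthetical ``any unbounded locally-$L^\infty$ truncation-limit built from $\log$'' does not supply this; a single logarithm is too fast.

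Your plan for (iii) is also off: $\Phi_\alpha$ is in every $L^r$, $r<\infty$, since powers of $\log$ are locally integrable to every order, so it cannot witness $\EP\not\hookrightarrow L^r$. Your fallback $|x|^{-N/p}$ with a cutoff near the origin will not lie in $\EP$ without a delicate logarithmic damping, and you leave that unspecified. The paper sidesteps this entirely by working at infinity: $u(x)=|x|^{-N/r}\mathbf{1}_{|x|\geq 1}$ is bounded (hence trivially in $\EPO$ by Taylor expansion, since $\|u\|_{pk}^{pk}$ is summable) but $\int_{|x|\geq 1}|x|^{-N}\,dx=\infty$, so $u\notin L^r$.
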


\vspace{0.3cm}
\begin{proof}[{Proof of Lemma \ref{Orl-Leb}}]
\vspace{-0.7cm}
\begin{itemize}
\item[(i)] Let $u$ be the function defined by
\begin{eqnarray*}
u(x)&=&\bigg(-\log|x|\bigg)^{1/p}\quad\mbox{if}\quad |x|\leq 1,\\
u(x)&=&0\qquad\mbox{if}\qquad |x|>1.
\end{eqnarray*}
For $\alpha>0$, we have
$$
\int_{\R^N}\,\bigg({\rm e}^{\frac{|u(x)|^p}{\alpha^p}}-1\bigg)\,dx<\infty\Longleftrightarrow \alpha>N^{-1/p}.
$$
Therefore $u\in \EP$ and $u\not\in\EPO$.
\item[(ii)]  Let $u$ be the function defined by
\begin{eqnarray*}
u(x)&=&\bigg(\log\left(1-\log|x|\right)\bigg)^{1/p}\quad\mbox{if}\quad |x|\leq 1,\\
u(x)&=&0\qquad\mbox{if}\qquad |x|>1.
\end{eqnarray*}
Clearly $u\not\in L^\infty$. Moreover, for any $\alpha>0$, we have
$$
\int_{\R^N}\,\bigg({\rm e}^{\frac{|u(x)|^p}{\alpha^p}}-1\bigg)\,dx=|\mathcal{S}^{N-1}|\int_0^1\, r^{N-1}\bigg((1-\log r)^{1\over \alpha^p} -1\bigg)\,dr <\infty,
$$
where $|\mathcal{S}^{N-1}|$ is the measure of the unit sphere $\mathcal{S}^{N-1}$ in $\R^N$. The second assertion follows since  $\EPO \hookrightarrow\EP $.
\item[(iii)] Let $u$ be the function defined by
\begin{eqnarray*}
u(x)&=&|x|^{-{N\over r}}\quad\mbox{if}\quad |x|\geq 1,\\
u(x)&=&0\qquad\mbox{if}\qquad |x|<1.
\end{eqnarray*}
Then $u\in \EPO$ but $u\not\in L^r$. Indeed, it is clear that $u\not\in L^r$, and for $\alpha>0$, we have
$$
\int_{\R^N}\,\bigg({\rm e}^{\frac{|u(x)|^p}{\alpha^p}}-1\bigg)\,dx=\frac{|\mathcal{S}^{N-1}|}{Nr}\,\sum_{k=1}^\infty\, \frac{1}{(pk-r)k!\alpha^{pk}}<\infty.
$$
\item[(iv)] Let $u\in L^q\cap L^\infty$ and let $\alpha>0$. Using the interpolation inequality
$$
\|u\|_{r}\leq \|u\|_{q}^{q/r}\,\|u\|_{\infty}^{1-{q/r}}\leq\|u\|_{q}+\|u\|_{\infty} ,\quad q\leq r\leq\infty,
$$
we obtain
\begin{eqnarray*}
\int_{\R^N}\,\bigg({\rm e}^{\frac{|u(x)|^p}{\alpha^p}}-1\bigg)\,dx&=&\sum_{k=1}^\infty\,\frac{1}{k!\alpha^{pk}}\|u\|_{L^{pk}}^{pk}\\
&\leq& \sum_{k=1}^\infty\,\frac{1}{k!\alpha^{pk}}\left(\|u\|_{q}+\|u\|_{\infty}\right)^{pk}\\
&=&  {\rm e}^{\frac{\left(\|u\|_{q}+\|u\|_{\infty}\right)^2}{\alpha^p}}-1.
\end{eqnarray*}
This clearly implies \eqref{embed}.
\end{itemize}
\end{proof}

}

 We have the embedding: $\exp L^p \hookrightarrow L^{q}$ for every $1< p\leq q$. More precisely:
\begin{lemma}\label{sarah55} For every $1\leq p\leq q<\infty,$ we have
\begin{equation}\label{2.1}
    \|u\|_{q} \leqslant\left(\Gamma\left(\frac{q}{p}+1\right)\right)^\frac{1}{q}\|u\|_{\exp L^p},
  \end{equation}
where $\Gamma(x):=\displaystyle\int_0^{\infty}\tau^{x-1}{\rm e}^{-\tau}\,d\tau, \; x>0.$
\end{lemma}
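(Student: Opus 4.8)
The plan is to bound the $L^q$ norm of $u$ directly by expanding the exponential in the definition of the Luxemburg norm, exactly as in the proof of part (iv) of Lemma~\ref{Orl-Leb}. Set $\lambda=\|u\|_{\exp L^p}$ (the case $\lambda=0$ being trivial). By \eqref{med1} we have $\int_{\R^N}\big({\rm e}^{|u(x)|^p/\lambda^p}-1\big)\,dx\leq 1$. Expanding the exponential in a power series and using the monotone convergence theorem to interchange sum and integral, this reads
\begin{equation*}
\sum_{k=1}^{\infty}\frac{1}{k!\,\lambda^{pk}}\int_{\R^N}|u(x)|^{pk}\,dx=\sum_{k=1}^{\infty}\frac{1}{k!\,\lambda^{pk}}\,\|u\|_{pk}^{pk}\leq 1,
\end{equation*}
so in particular each individual term is $\leq 1$, i.e. $\|u\|_{pk}^{pk}\leq k!\,\lambda^{pk}$ for every integer $k\geq 1$.

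First I would handle the case $q=pk$ for an integer $k\geq1$: then $\|u\|_{pk}\leq (k!)^{1/(pk)}\lambda=(\Gamma(k+1))^{1/(pk)}\lambda=(\Gamma(q/p+1))^{1/q}\lambda$, which is exactly \eqref{2.1}. For a general exponent $q$ with $p\leq q<\infty$, write $q$ between two consecutive multiples of $p$: since $q/p\geq 1$ there is an integer $k\geq 1$ with $pk\leq q\leq p(k+1)$ unless $q<p\cdot 2$... more simply, I would use a convexity/interpolation argument. The cleanest route is to estimate $\|u\|_q$ by interpolating between $\|u\|_{p\lfloor q/p\rfloor}$ and $\|u\|_{p\lceil q/p\rceil}$, but this introduces an awkward constant.

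A slicker approach, which I would prefer, avoids integer exponents altogether: for any real $q\geq p$, use the layer-cake (distribution function) identity together with the bound coming from the exponential. Precisely, normalize $\lambda=1$, so $\int_{\R^N}({\rm e}^{|u|^p}-1)\,dx\leq 1$. Then
\begin{equation*}
\int_{\R^N}|u(x)|^q\,dx=\int_{\R^N}\Big((|u(x)|^p)^{q/p}\Big)\,dx,
\end{equation*}
and I would bound the pointwise function $t\mapsto t^{q/p}$ against ${\rm e}^{t}-1$ after extracting the right constant. Concretely, using the inequality $t^{s}\leq \Gamma(s+1)\,({\rm e}^{t}-1)$ for $t\geq 0$ and $s\geq 1$ — which follows because $\sup_{t>0} t^s/({\rm e}^t-1)$ is attained and equals $\Gamma(s+1)$ up to the standard Gamma-function estimate; more rigorously one writes $t^s=\frac{1}{\Gamma(s)}\int_0^\infty (t\tau)^{s-1} t\, {\rm e}^{-\tau}d\tau$ type manipulations, or simply $t^s\leq \Gamma(s+1)\sum_{k\geq1}t^k/k!=\Gamma(s+1)({\rm e}^t-1)$ using $k!\leq \Gamma(s+1)\cdot\binom{?}{}$... — the clean statement is $t^s/s!\leq \sum_{k\geq 1}t^k/k!$ when $s\geq1$ is an integer, and for non-integer $s$ one interpolates the coefficients. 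Applying this with $t=|u(x)|^p$, $s=q/p\geq 1$ and integrating gives $\|u\|_q^q\leq \Gamma(q/p+1)\int_{\R^N}({\rm e}^{|u|^p}-1)\,dx\leq \Gamma(q/p+1)$, hence $\|u\|_q\leq (\Gamma(q/p+1))^{1/q}$, and rescaling yields \eqref{2.1}.

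The main obstacle is justifying the elementary pointwise inequality $t^{q/p}\leq \Gamma(q/p+1)({\rm e}^t-1)$ for all $t\geq 0$ when $q/p$ is not an integer; for integer values it is immediate from the Taylor expansion of ${\rm e}^t$, but in general one must argue via the integral representation of $\Gamma$ or via $\log$-convexity of the coefficients. I expect this to be a short but slightly delicate lemma; everything else (the power-series expansion, monotone convergence, and the final rescaling) is routine.
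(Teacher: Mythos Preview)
Your final approach---apply the pointwise inequality $t^{r}\leq \Gamma(r+1)\,(e^{t}-1)$ for $r=q/p\geq 1$ with $t=|u(x)|^{p}/\|u\|_{\exp L^p}^{p}$, then integrate and invoke \eqref{med1}---is exactly the paper's proof; the paper states this inequality without justification, so your detour through integer exponents and interpolation is unnecessary. For the record, the non-integer case you worry about follows by noting that $h(t):=(e^{t}-1)-t^{r}/\Gamma(r+1)$ satisfies $h(0)=0$ and $h'(t)=e^{t}-t^{r-1}/\Gamma(r)\geq 0$, since $\sup_{t\geq 0}t^{s}e^{-t}=s^{s}e^{-s}\leq \Gamma(s+1)$ for every $s\geq 0$.
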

The proof of the previous lemma is similar to that in \cite{RT}. For reader's convenience, we give it here.
\begin{proof}[{Proof of Lemma \ref{sarah55}}]
Let $K=\|u\|_{\EP}>0.$ Using the inequality
$$
 \frac{|x|^{p r}}{\Gamma(r+1)}\leq {\rm e}^{|x|^p}-1,\; r\geq 1,\; x\in\R,
$$
we have
$$
\int_{\R^N} \frac{(|u(x)|/K)^{p r}}{\Gamma(r+1)}dx\leq \int_{\R^N}\left( {\rm e}^{(|u(x)|/K)^{p}}-1\right)dx\leq 1.
$$
This leads to
$$
\|u\|_{p r}\leq \left(\Gamma(r+1)\right)^{1\over p r} K.
$$
The result follows by taking $r={q\over p}\geq 1.$
\end{proof}
\begin{remark}
For $\phi(s)={\rm e}^s-1-s$, on can prove the following inequality
$$
\|u\|_q\leq C(q)\|u\|_{L^\phi},\;2\leq q<\infty,
$$
for some constant $C(q)>0$ depending only on $q$.
\end{remark}
We recall that the following properties of the functions $\Gamma$ and ${\mathcal{B}}$ given by
$$ {\mathcal{B}}(x,y)=\int_0^1\tau^{1-x}(1-\tau)^{1-y}d\tau,\quad x,\; y>0.$$ We have
\begin{equation}
\label{gamma4}
 {\mathcal{B}}(x,y)=\frac{\Gamma(x+y)}{\Gamma(x)\Gamma(y)},\; \forall\; x,\; y>0,
\end{equation}

\begin{equation}
\label{gamma1}
\Gamma(x)\geq C>0,\; \forall\; x>0,
\end{equation}

\begin{equation}
\label{gamma2}
\Gamma(x+1)\sim \left({x\over \rm{e}}\right)^x\, \sqrt{2\pi x}, \; \mbox{ as }\; x \to \infty,
\end{equation}
and
\begin{equation}
\label{gamma3}
\Gamma(x+1)\leq C x^{x+{1\over 2}},\; \forall \; x\geq 1.
\end{equation}

The following Lemmas will be useful in the proof of the global existence.
\begin{lemma}
\label{med}
Let $\lambda>0$, $1\leq p,\; q<\infty$ and $K>0$ such that $\lambda q\,K^p\leq 1$. Assume that
$$
\|u\|_{\exp L^p}\leq K\,.
$$
Then
$$
\|{\rm e}^{\lambda |u|^p}-1\|_q\leq \left(\lambda q\,K^p\right)^{{1\over q}}\,.
$$
\end{lemma}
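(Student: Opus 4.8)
The plan is to expand ${\rm e}^{\lambda|u|^p}-1$ as a power series in $|u|^p$ and estimate the $L^q$ norm of the resulting sum by Minkowski's inequality, then recognize the outcome as a geometric-type series controlled by the hypothesis $\lambda q K^p\le 1$. Concretely, write
$$
{\rm e}^{\lambda|u(x)|^p}-1=\sum_{k=1}^{\infty}\frac{\lambda^k|u(x)|^{pk}}{k!},
$$
so that by the triangle inequality in $L^q$ (all terms being nonnegative, this is just Minkowski),
$$
\bigl\|{\rm e}^{\lambda|u|^p}-1\bigr\|_{q}\le\sum_{k=1}^{\infty}\frac{\lambda^k}{k!}\bigl\|\,|u|^{pk}\,\bigr\|_{q}
=\sum_{k=1}^{\infty}\frac{\lambda^k}{k!}\,\|u\|_{pkq}^{pk}.
$$
Now I would invoke Lemma \ref{sarah55} with exponent $pkq\ge p$ (legitimate since $p\ge 1$, $k\ge1$, $q\ge1$) to get $\|u\|_{pkq}\le\bigl(\Gamma(kq+1)\bigr)^{1/(pkq)}\|u\|_{\exp L^p}\le\bigl(\Gamma(kq+1)\bigr)^{1/(pkq)}K$, hence $\|u\|_{pkq}^{pk}\le\Gamma(kq+1)^{1/q}K^{pk}$.

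Substituting this back, the bound becomes
$$
\bigl\|{\rm e}^{\lambda|u|^p}-1\bigr\|_{q}\le\sum_{k=1}^{\infty}\frac{\lambda^k K^{pk}}{k!}\,\Gamma(kq+1)^{1/q}.
$$
The key combinatorial step is to control $\Gamma(kq+1)^{1/q}/k!$. Since $q\ge1$, one has $\Gamma(kq+1)=(kq)!\le (kq)^{kq}$ is too crude; instead I would use that $\Gamma(kq+1)\le \Gamma(k+1)^{q}\,q^{\,kq}$ — equivalently $(kq)!\le (k!)^q q^{kq}$, which follows by splitting the product $\{1,\dots,kq\}$ into $q$ blocks of length $k$ and bounding each block's product by $k!\,q^{k}$. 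This gives $\Gamma(kq+1)^{1/q}\le k!\,q^{k}$, and therefore
$$
\bigl\|{\rm e}^{\lambda|u|^p}-1\bigr\|_{q}\le\sum_{k=1}^{\infty}\frac{\lambda^k K^{pk}}{k!}\cdot k!\,q^{k}=\sum_{k=1}^{\infty}\bigl(\lambda q K^p\bigr)^{k}.
$$
Since $\lambda q K^p\le 1$, each term is bounded by $\lambda q K^p$ raised to a power $\ge1$, but to get the clean bound $(\lambda q K^p)^{1/q}$ claimed I would instead estimate directly: factor out the first term, $\sum_{k\ge1}(\lambda qK^p)^k=(\lambda qK^p)\sum_{k\ge0}(\lambda qK^p)^k$, and if $\lambda qK^p<1$ this is $\frac{\lambda qK^p}{1-\lambda qK^p}$. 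To reach exactly $(\lambda qK^p)^{1/q}$ one likely needs the slightly sharper bookkeeping $\Gamma(kq+1)^{1/q}\le C\,k!\,q^{k}$ combined with $(\lambda qK^p)^k\le \lambda qK^p$ for $k\ge1$ and $\frac1q\le1$; I expect the intended argument uses $\Gamma(kq+1)\le \Gamma(q+1)\,\Gamma(k+1)^q\,q^{(k-1)q}$ or a direct comparison yielding the exact constant.

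The main obstacle is the sharp factorial estimate: getting from $\Gamma(kq+1)$ to a bound of the form $(\text{const})^k\,k!$ with the constant exactly $q^k$ (or close enough to yield the stated $(\lambda qK^p)^{1/q}$ rather than merely $O(\lambda qK^p)$). Everything else — the series expansion, Minkowski, and the application of Lemma \ref{sarah55} — is routine. If the clean constant proves elusive one can always fall back on the weaker but sufficient estimate $\sum_{k\ge1}(\lambda qK^p)^k\le \frac{\lambda qK^p}{1-\lambda qK^p}$ under a strict inequality hypothesis, but I would first try to match the stated form by exploiting $(\lambda qK^p)^{k}\le(\lambda qK^p)^{1/q}$ for all $k\ge1$ when $\lambda qK^p\le1$ and $q\ge1$, which in fact gives the result immediately once the factorial ratio is pinned down.
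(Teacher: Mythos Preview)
Your approach via series expansion and Minkowski has a genuine gap that cannot be patched within this framework. After Minkowski and your factorial bound you arrive at $\sum_{k\ge 1}(\lambda q K^p)^k$; this series \emph{diverges} at the endpoint $\lambda q K^p=1$, which the hypothesis explicitly allows, so the argument breaks there. Even in the strict case the sum is $\frac{\lambda q K^p}{1-\lambda q K^p}$, not $(\lambda q K^p)^{1/q}$, and your final suggestion of bounding each term by $(\lambda q K^p)^{1/q}$ and then summing over $k$ produces an infinite sum. The loss is built into the use of Minkowski: the triangle inequality in $L^q$ is far from sharp for sums of nonnegative functions when $q>1$. (As a side remark, your blocking argument for $(kq)!\le (k!)^q q^{kq}$ makes literal sense only for integer $q$.)

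The paper's proof avoids Minkowski entirely and is a two-line computation. Use the pointwise inequality $(e^x-1)^q\le e^{qx}-1$ for $x\ge0$, $q\ge1$, to obtain
\[
\int_{\R^N}\bigl(e^{\lambda|u|^p}-1\bigr)^q\,dx\;\le\;\int_{\R^N}\bigl(e^{\lambda q|u|^p}-1\bigr)\,dx.
\]
Since $\|u\|_{\exp L^p}\le K$, write $\lambda q|u|^p\le\theta\cdot|u|^p/\|u\|_{\exp L^p}^p$ with $\theta:=\lambda qK^p\le1$; the convexity inequality $e^{\theta s}-1\le\theta(e^s-1)$ together with the Luxemburg-norm property \eqref{med1} bounds the right-hand side by $\lambda qK^p$. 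Taking $q$-th roots yields exactly $(\lambda qK^p)^{1/q}$, endpoint included.
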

\begin{proof}[{Proof of Lemma \ref{med}}]
Write
\begin{eqnarray*}
\int_{\R^N}\,\left({\rm e}^{\lambda |u|^p}-1\right)^q\,dx &\leq& \int_{\R^N}\,\left({\rm e}^{\lambda\,q\,|u|^p}-1\right)\,dx\\
&\leq&\int_{\R^N}\,\left({\rm e}^{\lambda\,q\,K^p\frac{|u|^p}{\|u\|_{\exp L^p}^p}}-1\right)\,dx\\
&\leq&\,\lambda\,q\,K^p\,\int_{\R^N}\,\left({\rm e}^{\frac{|u|^p}{\|u\|_{\exp L^p}^p}}-1\right)\,dx\leq \,\lambda\,q\,K^p,
\end{eqnarray*}
where we have used the fact that ${\rm e}^{\theta s}-1\leq \theta\left({\rm e}^{s}-1\right)$, $0\leq \theta\leq 1$, $s\geq 0$ and \eqref{med1}.
\end{proof}

\begin{lemma}
\label{params}
Let $m\geq p>1$, $a>\frac{N(m-1)}{2}$, $a>\frac{N}{2}$. Define
$$
\sigma=\frac{1}{m-1}-\frac{N}{2a}>0.
$$
Assume that
\begin{equation}
\label{AN}
N>\frac{2p}{p-1},
\end{equation}
and
\begin{equation}
\label{Aa}
a<\frac{N(m-1)}{2}\frac{1}{(2-m)_+}\,.
\end{equation}
Then, there exist $r,\;\;q,\;\; (\theta_k)_{k=0}^\infty\;\;, (\rho_k)_{k=0}^\infty$ such that
\begin{equation}
\label{r}
1\leq r\leq a\,.
\end{equation}

\begin{equation}
\label{q}
q\geq 1\quad \mbox{and}\quad \frac{1}{r}=\frac{1}{a}+\frac{1}{q}\,.
\end{equation}

\begin{equation}
\label{theta}
0<\theta_k<1\quad\mbox{and}\quad \frac{1}{q(pk+m-1)}=\frac{\theta_k}{a}+\frac{1-\theta_k}{\rho_k}\,.
\end{equation}

\begin{equation}
\label{rho}
 p\leq \rho_k<\infty\,.
\end{equation}

\begin{equation}
\label{beta1}
\frac{N}{2}\left(\frac{1}{r}-\frac{1}{a}\right)<1\,.
\end{equation}

\begin{equation}
\label{beta2}
 \sigma\Big[ \theta_k(pk+m-1)+1\Big]<1\,.
\end{equation}

\begin{equation}
\label{beta3}
1-\frac{N}{2}\left(\frac{1}{r}-\frac{1}{a}\right)-\sigma\theta_k(pk+m-1)=0\,.
\end{equation}
Moreover,
\begin{equation}
\label{behav1}
\theta_k\longrightarrow 0\quad\mbox{as}\quad k\longrightarrow\infty.
\end{equation}
\begin{equation}
\label{behav2}
\rho_k\longrightarrow \infty\quad\mbox{as}\quad k\longrightarrow\infty.
\end{equation}
\begin{equation}
\label{behav3}
\frac{(pk+m-1)(1-\theta_k)}{p\rho_k}\,(1+\rho_k)\leq k,\;\;\; \forall\;\;k\geq 1.
\end{equation}
\end{lemma}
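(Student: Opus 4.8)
The plan is to reduce the whole list of requirements to the choice of a single parameter and then check each item in turn. Relations \eqref{q}, \eqref{theta} and \eqref{beta3} are essentially definitions once one fixes $r$ (equivalently $q$, since \eqref{q} reads $\tfrac1q=\tfrac1r-\tfrac1a$): \eqref{beta3} together with \eqref{q} forces $\sigma\,\theta_k(pk+m-1)=1-\tfrac{N}{2q}$, so I set $t:=\tfrac1\sigma\bigl(1-\tfrac{N}{2q}\bigr)$ and $\theta_k:=t/(pk+m-1)$; then \eqref{theta} determines $\rho_k$ through $\tfrac{1-\theta_k}{\rho_k}=\tfrac{1}{pk+m-1}\bigl(\tfrac1q-\tfrac ta\bigr)$, i.e.\ $\rho_k=\bigl((pk+m-1)-t\bigr)/c$ with $c:=\tfrac1q-\tfrac ta$. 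Everything then reduces to exhibiting an admissible value of $q$ (in particular with $c>0$), and I would first translate each condition into an inequality or an asymptotic statement involving $q$, $t$ and $c$.

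With these explicit formulas the asymptotic claims are immediate: $\theta_k=t/(pk+m-1)$ decreases to $0$, giving \eqref{behav1}, and $\rho_k=\bigl((pk+m-1)-t\bigr)/c\sim (p/c)\,k$ is eventually increasing and tends to $\infty$, giving \eqref{behav2}. Likewise \eqref{r} and \eqref{beta1} become the one-line bounds $\tfrac1a+\tfrac1q\le1$ and $\tfrac{N}{2q}<1$; the constraint $0<\theta_k<1$ becomes a two-sided bound on $q$ (the lower side ensuring $t>0$, the upper side ensuring $\theta_k<1$ on the relevant range of $k$); and the constraint $p\le\rho_k<\infty$ is where \eqref{AN} enters, since $m\ge p$ gives $\tfrac{N(m-1)}{2}\ge\tfrac{N(p-1)}{2}>p$, which is exactly what keeps $\rho_k$ bounded below by $p$. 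Finally, inserting \eqref{beta3} into \eqref{beta2} rewrites the latter as $\sigma<\tfrac{N}{2q}$, i.e.\ $q<\tfrac{N}{2\sigma}$, and it is hypothesis \eqref{Aa}, which forces $\sigma<1$, that ensures this upper bound is compatible with $q>N/2$.

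The main obstacle is \eqref{behav3}. Substituting $\theta_k=t/(pk+m-1)$ and $\rho_k=\bigl((pk+m-1)-t\bigr)/c$, the left-hand side of \eqref{behav3} collapses to a quantity of the form $k+\tfrac1p\bigl(c+m-1-t\bigr)$, so \eqref{behav3} becomes a single ($k$-independent) inequality among $q$, $t$ and $c$; after replacing $t=\tfrac1\sigma(1-\tfrac{N}{2q})$, $c=\tfrac1q-\tfrac ta$ and using the definition of $\sigma$, this simplifies to a sharp threshold condition on $q$. The real content of the lemma is that this threshold is compatible with the upper bounds on $q$ collected in the previous paragraph, so that the admissible interval for $q$ is non-empty; checking this compatibility is precisely where the quantitative hypotheses \eqref{AN} and \eqref{Aa} are used in an essential way. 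Once a $q$ in that interval is fixed, $r$ is read off from \eqref{q}, $\theta_k$ from \eqref{beta3}, $\rho_k$ from \eqref{theta}, and the remaining conditions follow from the bookkeeping already performed. I expect the only genuinely delicate algebra to be the simplification of \eqref{behav3} and the resulting compatibility check; everything else is routine.
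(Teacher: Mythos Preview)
Your reduction is clean, but it runs into a genuine obstruction that you have not yet seen. With $q$ (hence $r$, $t$, $c$) chosen once and for all, your own formula gives for the left side of \eqref{behav3}
\[
\frac{(pk+m-1)(1-\theta_k)}{p\rho_k}\,(1+\rho_k)=k+\frac{c+m-1-t}{p},
\]
so \eqref{behav3} is exactly the $k$-free condition $c+m-1-t\le 0$. Now a short computation using $\sigma=\frac{1}{m-1}-\frac{N}{2a}$ and $\frac{1}{q}=\frac{2}{N}(1-\sigma t)$ shows
\[
c=\frac{1}{q}-\frac{t}{a}=\frac{2}{N(m-1)}\,(m-1-t),
\]
hence $c+m-1-t=(m-1-t)\bigl(1+\tfrac{2}{N(m-1)}\bigr)$, and \eqref{behav3} becomes precisely $t\ge m-1$. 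But for $k=0$ the requirement $\theta_0<1$ in \eqref{theta} reads $t<m-1$. These two constraints are therefore mutually exclusive: no single value of $q$ can make both \eqref{theta} at $k=0$ and \eqref{behav3} at any $k\ge 1$ hold. The ``admissible interval for $q$'' you are hoping to exhibit is empty, so the compatibility check you announce as the heart of the argument will fail.

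The paper sidesteps this by letting $q$ (and $r$) vary with $k$: it chooses $\theta_k$ freely in the interval $\bigl(0,\frac{1}{pk+m-1}\min(m-1,\frac{1-\sigma}{\sigma})\bigr)$, and then \emph{defines} $\rho_k$ and $q$ from $\theta_k$. In your notation this amounts to allowing $t_k:=\theta_k(pk+m-1)$ to depend on $k$; one may then take $t_0<m-1$ to secure $\theta_0<1$, while choosing $t_k\ge m-1$ for $k\ge 1$ to secure \eqref{behav3}. Since \eqref{beta3} ties $1-\frac{N}{2q}$ to $\sigma t_k$, the quantity $q$ genuinely changes with $k$ under this scheme, which is why the lemma is stated (and used in Section~6) with the parameters inside a sum over $k$. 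If you want to retain a single-parameter formulation, the cleanest repair is to keep $t$ constant for $k\ge 1$ and handle the $k=0$ term separately (there \eqref{behav3} is not required); otherwise, follow the paper and let $q=q_k$.
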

\begin{remark}
The assumption \eqref{Aa} together with $a>\frac{N}{2}$ implies that $m>\frac{3}{2}$.
\end{remark}
\begin{proof}[{Proof of Lemma \ref{params}}]
Note that the assumption \eqref{Aa} implies that $\sigma<1$. It follows that, for all integer $k\geq 0$ one can choose $\theta_k$ such that
\begin{equation}
\label{choix1}
0<\theta_k<\frac{1}{pk+m-1}\min\Big(m-1, \frac{1-\sigma}{\sigma}\Big)\,.
\end{equation}
Next, we choose $\rho_k$ such that
\begin{equation}
\label{choix2}
\frac{1-\theta_k}{\rho_k}=\frac{2}{N(pk+m-1)}-\frac{2\theta_k}{N(m-1)}\,.
\end{equation}
Finally, we choose $q$ such that
\begin{equation}
\label{choix3}
\frac{1}{q(pk+m-1)}=\frac{\theta_k}{a}+\frac{1-\theta_k}{\rho_k}\,.
\end{equation}
This leads to all remainder parameters.
\end{proof}


We state the following proposition which is needed for the local well-posedness in the space $\exp L^p_0$.

\begin{proposition}\label{step0}
Let $1\leq p<\infty$ and $u\in C([0,T]; \exp L^p)$. Then for every $\alpha>0$ there holds
$$
\left({\rm e}^{\alpha |u|^p}-1\right)\in C([0,T]; L^r),\;1\leq r<\infty.
$$
\end{proposition}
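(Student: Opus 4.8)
The plan is to fix $\alpha>0$ and $t_0\in[0,T]$, and show continuity of the map $t\mapsto \mathrm{e}^{\alpha|u(t)|^p}-1$ into $L^r$ at $t_0$. Since $u\in C([0,T];\exp L^p)$, as $t\to t_0$ we have $\|u(t)-u(t_0)\|_{\exp L^p}\to 0$; in particular there is a neighbourhood of $t_0$ on which $\|u(t)\|_{\exp L^p}$ is bounded, say by some $M>0$. The first step is the elementary pointwise inequality
\begin{equation*}
\left|\mathrm{e}^{a}-\mathrm{e}^{b}\right|\leq |a-b|\left(\mathrm{e}^{a}+\mathrm{e}^{b}\right),\qquad a,b\geq 0,
\end{equation*}
applied with $a=\alpha|u(t)(x)|^p$, $b=\alpha|u(t_0)(x)|^p$, together with $\big||u(t)|^p-|u(t_0)|^p\big|\leq C_p\,|u(t)-u(t_0)|\,\big(|u(t)|^{p-1}+|u(t_0)|^{p-1}\big)$ (from convexity of $s\mapsto s^p$). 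This bounds $\big|\mathrm{e}^{\alpha|u(t)|^p}-\mathrm{e}^{\alpha|u(t_0)|^p}\big|$ pointwise by $|u(t)-u(t_0)|$ times a product of powers of $|u(t)|,|u(t_0)|$ and $\mathrm{e}^{\alpha|u(t)|^p}+\mathrm{e}^{\alpha|u(t_0)|^p}$.

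The second step is to take the $L^r$ norm and apply Hölder's inequality to split the product: one factor is $\|u(t)-u(t_0)\|_{L^{s_1}}$, the remaining factors are $L^{s_j}$ norms of powers $|u|^{p-1}$, which reduce to powers of $\|u\|_{L^{s_j(p-1)}}$, and the exponential factors are estimated in $L^{s_0}$. Here all the Lebesgue norms of $u(t)$ and $u(t_0)$ are controlled: the plain $L^{s}$ norms via Lemma \ref{sarah55} (so they are $\leq C(s,p)M$), and the exponential terms $\|\mathrm{e}^{\alpha|u|^p}-1\|_{L^{s_0}}$ via Lemma \ref{med} — provided we first shrink the neighbourhood of $t_0$ (equivalently enlarge, if needed, by absorbing constants) so that $\alpha\, s_0\, M^p\leq 1$; since $\alpha$ is fixed this only forces a constraint on how large $s_0$ may be, which we are free to choose. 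Then $\|\mathrm{e}^{\alpha|u|^p}\|_{L^{s_0}}\leq 1+\|\mathrm{e}^{\alpha|u|^p}-1\|_{L^{s_0}}$ is bounded. Hence
\begin{equation*}
\left\|\mathrm{e}^{\alpha|u(t)|^p}-\mathrm{e}^{\alpha|u(t_0)|^p}\right\|_{L^r}\leq C(\alpha,p,r,M)\,\|u(t)-u(t_0)\|_{\exp L^p}\longrightarrow 0,
\end{equation*}
where in the last step we also used Lemma \ref{sarah55} to bound $\|u(t)-u(t_0)\|_{L^{s_1}}$ by a constant multiple of $\|u(t)-u(t_0)\|_{\exp L^p}$. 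This gives continuity at $t_0$, and since $t_0$ was arbitrary, the claim follows; note $\mathrm{e}^{\alpha|u(0)|^p}-1\in L^r$ itself follows from Lemma \ref{med} applied at $t=0$ with $s_0$ replaced by $r$ after rescaling, or simply because $u(0)\in\exp L^p$.

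The main obstacle is purely bookkeeping: choosing the Hölder exponents $s_0,s_1,\dots$ consistently so that (a) their reciprocals sum to $1/r$, (b) each $s_j$ is at least $1$ (needed to invoke Lemma \ref{sarah55} on $|u|^{p-1}$, i.e. $s_j(p-1)\geq p$, which holds once $s_j$ is large), and (c) $\alpha s_0 M^p\leq 1$, i.e. $s_0$ is not too large. These are compatible: take $s_0$ as small as permitted (close to, say, $2r$), and distribute the rest among finitely many large exponents $s_1,\dots,s_J$; since there are only finitely many power factors (namely the two $|u|^{p-1}$ terms from the two instances of the inequality), this is elementary. One should also be slightly careful when $p<2$, where $p-1<1$ and $|u|^{p-1}$ is controlled by $\|u\|_{L^{s_j(p-1)}}^{p-1}$ only once $s_j(p-1)\geq p$, forcing $s_j\geq p/(p-1)$; again harmless since the $s_j$ are chosen large. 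No genuinely hard estimate is involved — the proposition is a soft consequence of Lemmas \ref{sarah55} and \ref{med} plus continuity of $u$.
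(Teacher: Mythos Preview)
Your argument has a genuine gap at the step where you invoke Lemma~\ref{med} to control $\|{\rm e}^{\alpha|u(t)|^p}-1\|_{L^{s_0}}$. That lemma requires $\alpha\,s_0\,K^p\leq 1$ with $K=\|u(t)\|_{\EP}$, and since $s_0\geq 1$ (it is a H\"older exponent) this forces $\alpha\,M^p\leq 1$. But $M$ is bounded below by $\|u(t_0)\|_{\EP}$, which may be large; shrinking the neighbourhood of $t_0$ does not help, and you are \emph{not} free to make $s_0$ small enough. So in general there is no admissible $s_0$, and the estimate fails. (Incidentally, the line ``$\|{\rm e}^{\alpha|u|^p}\|_{L^{s_0}}\leq 1+\|{\rm e}^{\alpha|u|^p}-1\|_{L^{s_0}}$'' is also wrong on $\R^N$, since $\|1\|_{L^{s_0}(\R^N)}=\infty$; you must work with ${\rm e}^{\alpha|u|^p}-1$ throughout.)

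The paper avoids this by arranging the decomposition so that the smallness comes entirely from $w_n:=u(t_n)-u(t_0)$ rather than from $u(t_0)$. Writing ${\rm e}^{|v_n|^p}-{\rm e}^{|v|^p}={\rm e}^{|v|^p}\big({\rm e}^{|v_n|^p-|v|^p}-1\big)$ and using $\big||v_n|^p-|v|^p\big|\leq C\big(|w_n|^p+|w_n||v|^{p-1}\big)$, the exponent now contains a factor of $|w_n|$. The key new ingredient (Lemma~\ref{Gen2}) is that ${\rm e}^{\alpha|w_n||v|^{p-1}}-1\to 0$ in $L^1$ whenever $w_n\to 0$ in $\EP$ and $v\in\EP$, proved by Taylor expansion, H\"older, and Lemma~\ref{sarah55}: the resulting geometric series has ratio $\alpha\,\|w_n\|_{\EP}\,\|v\|_{\EP}^{p-1}$, which tends to $0$ regardless of how large $\|v\|_{\EP}$ is. Your pointwise bound $|{\rm e}^a-{\rm e}^b|\leq|a-b|({\rm e}^a+{\rm e}^b)$ does not isolate $w_n$ in the exponential in the same way, which is why you are forced into the impossible constraint on $s_0$. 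A repair of your approach would be to first write ${\rm e}^{\alpha|u(t)|^p}\leq {\rm e}^{C\alpha|u(t_0)|^p}{\rm e}^{C\alpha|u(t)-u(t_0)|^p}$ and apply Lemma~\ref{med} only to the second factor (with $K=\|u(t)-u(t_0)\|_{\EP}$, which \emph{is} small); but this is essentially what the paper's Lemmas~\ref{Gen1}--\ref{Gen3} carry out.
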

\begin{proof}[Proof of Proposition \ref{step0}]
Although the proof is similar to that given in \cite{MOT}, we give it here for completeness. Using the inequality
$$
\left|{\rm e}^{ x}  -{\rm e}^{ y}\right|^r\leq \left|{\rm e}^{r x}  -{\rm e}^{ r y}\right|,\;\; x,\, y\in\R,
$$
it suffices to consider only the case $r=1$. Note that the proof for $p=2$ was done in \cite{IJMS}. The case $p=1$ follows by the inequality
$$
\left|{\rm e}^{ |x|-|y|}  -1\right|\leq  {\rm e}^{| x-y|}  -1,\;\; x,\, y\in\R,
$$
and property (v) in Proposition \ref{fatou}. The general case follows from the following lemmas.

\begin{lemma}
\label{Gen1}
Assume that
$$
v_n\to v \quad\mbox{in}\quad \exp L^p.
$$
Then, for any $\alpha>0$, we have
$$
{\rm e}^{\alpha |v_n-v|^p}-1\to 0 \quad\mbox{in}\quad L^1.
$$
\end{lemma}
\begin{proof}[{Proof of Lemma \ref{Gen1}}] It suffices to consider the case $v=0$ and $\alpha=1$. For given $0<\varepsilon\leq 1$, there exists $N\geq 1$ such that $\|v_n\|_{\EP}\leq \varepsilon$ for all $n\geq N$. By definition of the norm $\|\cdot\|_{\EP}$, there exists $0<\lambda=\lambda_n<\varepsilon$ such that
$$
\int_{\R^N}\,\left({\rm e}^{|\frac{v_n}{\lambda}|^p}-1\right)\,dx\leq 1,\quad \forall\;\;\;n\geq N.
$$
By convexity argument, we deduce that
\begin{eqnarray*}
\int_{\R^N}\,\left({\rm e}^{|v_n|^p}-1\right)\,dx&=&\int_{\R^N}\,\left({\rm e}^{\lambda^p|\frac{v_n}{\lambda}|^p}-1\right)\,dx\\
&\leq& \int_{\R^N}\,\left({\rm e}^{|\varepsilon\frac{v_n}{\lambda}|^p}-1\right)\,dx\\
&\leq&\varepsilon\int_{\R^N}\,\left({\rm e}^{|\frac{v_n}{\lambda}|^p}-1\right)\,dx\\
&\leq&\varepsilon.
\end{eqnarray*}
\end{proof}
\begin{lemma}
\label{Gen2}
Let $1<p<\infty$ and $v\in \EP$. Assume that
$$
w_n\to 0 \quad\mbox{in}\quad \EP.
$$
Then, for any $\alpha>0$, we have
$$
{\rm e}^{\alpha |w_n||v|^{p-1}}-1\to 0 \quad\mbox{in}\quad L^1.
$$
\end{lemma}
\begin{proof}[{Proof of Lemma \ref{Gen2}}]
Write
\begin{eqnarray*}
\Big\|{\rm e}^{\alpha |w_n||v|^{p-1}}-1\Big\|_{L^1}&=&\sum_{k=1}^\infty\,\frac{\alpha^k}{k!}\int\,|w_n|^k|v|^{k(p-1)}\,dx\\
&\leq&\sum_{k=1}^\infty\,\frac{\alpha^k}{k!}\|w_n\|_{L^{kp}}^k\,\|v\|_{L^{kp}}^{k(p-1)}
\end{eqnarray*}
where we have used H\"older's inequality with
$$
\frac{1}{k}=\frac{1}{kp}+\frac{1}{k\frac{p}{p-1}}.
$$
Hence, using \eqref{2.1}, we deduce that
\begin{eqnarray*}
\Big\|{\rm e}^{\alpha |w_n||v|^{p-1}}-1\Big\|_{L^1}&\leq&\sum_{k=1}^\infty\,\frac{\alpha^k}{k!}\,(k!)^{1/p}(k!)^{1-{1/p}}\,\|w_n\|_{\EP}^k\,\|v\|_{\EP}^{k(p-1)}\\
&\leq&\sum_{k=1}^\infty\,\Big(\alpha\|w_n\|_{\EP}\|v\|_{\EP}^{p-1}\Big)^k\\
&\leq&\frac{\alpha\,\|w_n\|_{\EP}\,\|v\|_{\EP}^{p-1}}{1-\alpha\,\|w_n\|_{\EP}\,\|v\|_{\EP}^{p-1}}\longrightarrow 0.
\end{eqnarray*}

\end{proof}

\begin{lemma}
\label{Gen3}
Let $1<p<\infty$. Assume that
$$
v_n\to v \quad\mbox{in}\quad \EP.
$$
Then,
$$
{\rm e}^{ |v_n|^{p}}-{\rm e}^{ |v|^{p}}\to 0 \quad\mbox{in}\quad L^1.
$$
\end{lemma}
\begin{proof}[{Proof of Lemma \ref{Gen3}}]
Set $w_n=v_n-v$, then
$$
{\rm e}^{ |v_n|^{p}}-{\rm e}^{ |v|^{p}}=\Big({\rm e}^{ |v|^{p}}-1\Big)\,\Big({\rm e}^{ |w_n+v|^{p}-|v|^p}-1\Big)+\Big({\rm e}^{ |w_n+v|^{p}-|v|^p}-1\Big).
$$
Using the following elementary inequality
$$
\exists\;\;\; \alpha>0\;\;\;\mbox{such that}\;\;\; \Big|\,|a+b|^p-|b|^p\,\Big|\,\leq\,\alpha\left(|a|^p+|a||b|^{p-1}\right),\;\;\;\forall\;\;\; a,b\in\R,
$$
it follows that
$$
\Big\|{\rm e}^{ |w_n+v|^{p}-|v|^p}-1\Big\|_{L^1}\leq \Big\|{\rm e}^{ \alpha|w_n|^{p}+\alpha|w_n||v|^{p-1}}-1\Big\|_{L^1}\,.
$$
Let us write
$$
{\rm e}^{ \alpha|w_n|^{p}+\alpha|w_n||v|^{p-1}}-1= \mathbf{I}_n+\mathbf{J}_n+\mathbf{K}_n,
$$
where
\begin{eqnarray*}
\mathbf{I}_n&=&\Big({\rm e}^{ \alpha|w_n|^{p}}-1\Big)\,\Big({\rm e}^{\alpha|w_n||v|^{p-1}}-1\Big)  \\
\mathbf{J}_n&=& \Big({\rm e}^{ \alpha|w_n|^{p}}-1\Big) \\
\mathbf{K}_n&=& \Big({\rm e}^{\alpha|w_n||v|^{p-1}}-1\Big)
\end{eqnarray*}
By Lemma \ref{Gen2} and since $w_n\to 0$ in $\EP$, $v\in \EP$, we deduce that
\begin{eqnarray*}
\mathbf{I}_n\longrightarrow 0 \quad&\mbox{in}&\quad L^1,\\
\mathbf{J}_n\longrightarrow 0 \quad&\mbox{in}&\quad L^1,\\
\mathbf{K}_n\longrightarrow 0 \quad&\mbox{in}&\quad L^1.
\end{eqnarray*}
The proof of Lemma \ref{Gen3} is complete.
\end{proof}
Combining Lemmas \ref{Gen1}-\ref{Gen2}-\ref{Gen3}, we easily deduce the desired result; that is
$$
{\rm e}^{\alpha |u|^p}-1\;\in \;C([0,T]; L^1),
$$
whenever $u\in C([0,T]; \EP)$. This finishes the proof of Proposition \ref{step0}.
\end{proof}
A straightforward consequence is:
\begin{corollary}\label{step1} Let $1\leq p<\infty$ and $u\in C([0,T]; \exp L^p)$. Assume that $f$ satisfies \eqref{1.9}. Then for every $p\leq r<\infty$ there holds
\begin{equation*}
    f(u)\in C\left([0, T]; L^r\right).
\end{equation*}
\end{corollary}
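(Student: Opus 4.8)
The plan is to deduce Corollary \ref{step1} from Proposition \ref{step0} together with the growth and Lipschitz bound \eqref{1.9} on $f$. The starting point is the observation that \eqref{1.9} with $v=0$ and $f(0)=0$ gives the pointwise p-growth estimate
\begin{equation*}
|f(u)|\leq C|u|\,{\rm e}^{\lambda|u|^p}+C|u|,\qquad u\in\R,
\end{equation*}
so that, for any $x$, $|f(u(x))|$ is controlled by $|u(x)|\bigl({\rm e}^{\lambda|u(x)|^p}-1\bigr)$ plus lower-order terms in $|u(x)|$. First I would fix $r\in[p,\infty)$ and split the estimate: by Hölder's inequality with exponents $2$ and $2$ (or any admissible pair), $\|f(u(t))\|_r$ is bounded by a constant times $\|u(t)\|_{2r}\,\bigl\|{\rm e}^{\lambda|u(t)|^p}-1\bigr\|_{2r}+\|u(t)\|_r$. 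By Lemma \ref{sarah55} (valid since $2r\geq r\geq p\geq 1$), the Lebesgue norms $\|u(t)\|_{2r}$ and $\|u(t)\|_r$ are finite and continuous in $t$, because $u\in C([0,T];\exp L^p)$ and the embedding constant is uniform; and by Proposition \ref{step0} applied with $\alpha=\lambda$ and exponent $2r$, the map $t\mapsto{\rm e}^{\lambda|u(t)|^p}-1$ lies in $C([0,T];L^{2r})$. Hence $f(u)\in L^\infty(0,T;L^r)$ and each individual factor is continuous; this shows $f(u(t))\in L^r$ for every $t$.

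The continuity $t\mapsto f(u(t))$ in $L^r$ is the only point requiring a little care, and I would handle it directly with the Lipschitz bound rather than trying to multiply continuities of factors. For $s,t\in[0,T]$, \eqref{1.9} gives
\begin{equation*}
|f(u(t))-f(u(s))|\leq C|u(t)-u(s)|\Bigl({\rm e}^{\lambda|u(t)|^p}+{\rm e}^{\lambda|u(s)|^p}\Bigr),
\end{equation*}
and I would apply Hölder with exponents $2$ and $2$ to get
\begin{equation*}
\|f(u(t))-f(u(s))\|_r\leq C\,\|u(t)-u(s)\|_{2r}\Bigl(\bigl\|{\rm e}^{\lambda|u(t)|^p}-1\bigr\|_{2r}+\bigl\|{\rm e}^{\lambda|u(s)|^p}-1\bigr\|_{2r}+2|\R^N|^{1/(2r)}\Bigr)?
\end{equation*}
— here one must be slightly careful because ${\rm e}^{\lambda|u|^p}$ itself is not in any $L^q$, only ${\rm e}^{\lambda|u|^p}-1$ is, so I would instead bound the right-hand side by $C\,\|u(t)-u(s)\|_{2r}\,\Psi$ where $\Psi$ involves $\|u(t)-u(s)\|_{2r}^{\,?}$ paired with $\bigl\|{\rm e}^{\lambda|u(t)|^p}-1\bigr\|_{2r}$ via a further splitting. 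Concretely, writing ${\rm e}^{\lambda|u|^p}=1+({\rm e}^{\lambda|u|^p}-1)$, the difference splits into a term $\lesssim\|u(t)-u(s)\|_r$ and a term $\lesssim\|u(t)-u(s)\|_{2r}\bigl(\|{\rm e}^{\lambda|u(t)|^p}-1\|_{2r}+\|{\rm e}^{\lambda|u(s)|^p}-1\|_{2r}\bigr)$. Since $t\mapsto u(t)$ is continuous into $\exp L^p$, hence (Lemma \ref{sarah55}) into $L^r$ and $L^{2r}$, and since $\sup_{t}\|{\rm e}^{\lambda|u(t)|^p}-1\|_{2r}<\infty$ by the compactness of $[0,T]$ and the continuity from Proposition \ref{step0}, letting $s\to t$ forces $\|f(u(t))-f(u(s))\|_r\to 0$. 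That gives $f(u)\in C([0,T];L^r)$.

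The main obstacle — and really the only subtlety — is keeping track of which quantities are genuinely finite: $\exp L^p$ functions have all finite $L^q$-norms for $q\geq p$ (Lemma \ref{sarah55}) and ${\rm e}^{\lambda|u|^p}-1$ has all finite $L^q$-norms (Proposition \ref{step0}), but ${\rm e}^{\lambda|u|^p}$ itself and the constant function $1$ are \emph{not} integrable on $\R^N$, so every Hölder estimate must be arranged so that only differences and only the ``$-1$'' exponentials appear, never a bare ${\rm e}^{\lambda|u|^p}$ or a bare constant. Once the bookkeeping is done this way, the proof is a two-line assembly: pointwise bound from \eqref{1.9}, Hölder, then quote Lemma \ref{sarah55} and Proposition \ref{step0}. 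I would also remark that the choice of the conjugate pair $(2,2)$ is immaterial — any $(\theta,\theta')$ with $\theta r\geq p$ works — and that this is exactly the estimate that will later feed the contraction argument for Theorem \ref{local}.
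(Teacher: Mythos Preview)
Your proposal is correct and follows essentially the same route as the paper's own proof: use the Lipschitz bound \eqref{1.9}, split ${\rm e}^{\lambda|u|^p}=1+({\rm e}^{\lambda|u|^p}-1)$ so that only the integrable piece ${\rm e}^{\lambda|u|^p}-1$ enters the H\"older estimate, then conclude via Lemma \ref{sarah55} and Proposition \ref{step0}. The only cosmetic difference is that the paper bounds the factor $\|{\rm e}^{\lambda|u(t_n)|^p}-1\|_{2r}$ directly by its continuity in $t$ (from Proposition \ref{step0}), whereas you invoke compactness of $[0,T]$ to get a uniform bound; both are equivalent here.
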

\begin{proof}
Fix $p\leq r<\infty$, $0\leq t\leq T$ and let $(t_n)\subset [0,T]$ such that $t_n\to t$. Using H\"older's inequality, we obtain
\begin{eqnarray*}
\|f(u(t_n))-f(u(t))\|_r&\leq & 2C\|u(t_n)-u(t)\|_r+\\
&&C\||u(t_n)-u(t)|({\rm e}^{\lambda \,|u(t_n)|^p}-1+{\rm e}^{\lambda \,|u(t)|^p}-1)\|_r\\
&\leq&  2C\|u(t_n)-u(t)\|_r+C\|u(t_n)-u(t)\|_{{2r}}\times \\
&&\left(\|{\rm e}^{\lambda |u(t_n)|^p}-1\|_{{2r}}+\|{\rm e}^{\lambda |u(t)|^p}-1\|_{{2r}}\right)\\
&&\hspace{-3cm}\leq C\|u(t_n)-u(t)\|_{\EP}\left(1+\|{\rm e}^{\lambda |u(t_n)|^p}-1\|_{{2r}}+\|{\rm e}^{\lambda |u(t)|^p}-1\|_{{2r}}\right),
\end{eqnarray*}
where we have used Lemma \ref{sarah55} in the last inequality. From  Proposition \ref{step0} we know that $\|{\rm e}^{\lambda |u(t_n)|^p}-1\|_{{2r}}\to \|{\rm e}^{\lambda |u(t)|^p}-1\|_{{2r}}$ as $n\to\infty$. It follows that $\|f(u(t_n))-f(u(t))\|_{r}\to 0$ which is the desired conclusion.
\end{proof}


\section{Linear estimates}

In this section we establish some results needed for the proofs of the main theorems. We first recall some basic estimates for the linear heat semigroup ${\rm e}^{t\Delta}.$   The solution of the linear heat equation
\begin{equation*}
\left\{\begin{array}{cc}
\partial_{t} u=\Delta u,\; t>0,\; x\in \R^N,\\
u(0,x)=u_{0}(x),
\end{array}
\right.
\end{equation*}
can be written as a convolution:
\begin{equation*}
    u(t,x)=\bigr(G_t\star u_0\bigl)(x):=\bigr({\rm e}^{t\Delta}u_0\bigl)(x),
\end{equation*}
where
\begin{equation*}
   G_t(x):=G(t,x)=\frac{{\rm e}^{- {|x|^2\over 4t}}}{(4\pi t)^{N\over 2}},\; t>0,\; x\in \R^N,
\end{equation*}
is the heat kernel.  We will frequently use the $L^r-L^\rho$ estimate as stated in the Proposition below.
\begin{proposition}\label{LPLQQ}  For all $1\leq r\leq \rho\leq\infty$, we have
\begin{equation}\label{x}
\|{\rm e}^{t\Delta}\varphi\|_{\rho}\leqslant   t^{-\frac{N}{2}(\frac{1}{r}-\frac{1}{\rho})}\|\varphi\|_{r},\qquad\,\, \forall\; t>0,\,\, \forall\; \varphi\in L^r.
\end{equation}
\end{proposition}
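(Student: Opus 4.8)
The final statement to prove is Proposition~\ref{LPLQQ}, the classical $L^r$--$L^\rho$ smoothing estimate for the heat semigroup. The plan is to obtain it directly from the convolution representation ${\rm e}^{t\Delta}\varphi = G_t \star \varphi$ together with Young's convolution inequality and an explicit computation of the $L^s$ norm of the Gaussian kernel $G_t$. First I would fix $1\le r\le\rho\le\infty$ and let $s\in[1,\infty]$ be the Young exponent determined by $1+\tfrac1\rho = \tfrac1s + \tfrac1r$; one checks $s\ge 1$ precisely because $r\le\rho$. Young's inequality then gives $\|{\rm e}^{t\Delta}\varphi\|_\rho = \|G_t\star\varphi\|_\rho \le \|G_t\|_s\,\|\varphi\|_r$.

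The second step is to compute $\|G_t\|_s$. For $1\le s<\infty$, using $G_t(x) = (4\pi t)^{-N/2}{\rm e}^{-|x|^2/(4t)}$ and the substitution $y = x/\sqrt{4t}$,
\begin{equation*}
\|G_t\|_s^s = (4\pi t)^{-Ns/2}\int_{\R^N}{\rm e}^{-s|x|^2/(4t)}\,dx = (4\pi t)^{-Ns/2}(4t)^{N/2}\int_{\R^N}{\rm e}^{-s|y|^2}\,dy = C_{N,s}\, t^{-\frac{N}{2}(s-1)},
\end{equation*}
so that $\|G_t\|_s = C_{N,s}^{1/s}\, t^{-\frac{N}{2}(1-\frac1s)}$, and the case $s=\infty$ gives $\|G_t\|_\infty = (4\pi t)^{-N/2}$, consistent with the formula $t^{-\frac N2(1-\frac1s)}$ at $s=\infty$. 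Since $1-\tfrac1s = \tfrac1r - \tfrac1\rho$ by the choice of $s$, this yields $\|G_t\|_s = C\, t^{-\frac N2(\frac1r-\frac1\rho)}$, and combining with the Young bound gives \eqref{x}, up to the value of the constant.

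The only genuine subtlety is the constant: the statement \eqref{x} has no constant in front (coefficient $1$), whereas the computation above produces $C_{N,s}^{1/s}$. This is resolved by noting that $\int_{\R^N}{\rm e}^{-s|y|^2}\,dy = (\pi/s)^{N/2}$, so $\|G_t\|_s^s = (4\pi t)^{-Ns/2}(4t)^{N/2}(\pi/s)^{N/2} = s^{-N/2}(4\pi t)^{-N(s-1)/2}\cdot$ (a harmless rearrangement), and in fact $\|G_t\|_s = s^{-N/(2s)}(4\pi t)^{-\frac N2(1-\frac1s)} \le (4\pi t)^{-\frac N2(1-\frac1s)} \le (4t)^{-\frac N2 (1-\frac 1s)} \cdots$; one sees $s^{-N/(2s)}\le 1$ for $s\ge 1$ and $(4\pi)^{-\frac N2(1-\frac1s)}\le 1$, so $\|G_t\|_s \le t^{-\frac N2(\frac1r-\frac1\rho)}$ with constant exactly $1$. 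Thus the inequality as stated holds. I do not expect any real obstacle here; the main point to be careful about is tracking the exponents of $t$ and $s$ through Young's inequality and the Gaussian integral so that the clean constant $1$ emerges, and treating the endpoint $s=\infty$ (equivalently $r=\rho$, where the estimate is just the contraction property $\|{\rm e}^{t\Delta}\varphi\|_r\le\|\varphi\|_r$) separately if one wishes to avoid dividing by zero in $1/s$.
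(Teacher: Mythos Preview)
The paper does not actually prove this proposition; it is stated as a standard smoothing estimate for the heat semigroup and used throughout without proof. Your argument via the convolution representation and Young's inequality is the standard one, and your tracking of the constants is correct: from $\|G_t\|_s = s^{-N/(2s)}(4\pi t)^{-\frac{N}{2}(1-\frac1s)}$ and the elementary bounds $s^{-N/(2s)}\le 1$, $(4\pi)^{-\frac{N}{2}(1-\frac1s)}\le 1$ for $s\ge 1$, one indeed gets the constant $1$ in \eqref{x}.

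One small slip worth fixing: in your final parenthetical remark you identify the endpoint $s=\infty$ with the case $r=\rho$, but these are opposite ends. From $1+\tfrac1\rho=\tfrac1s+\tfrac1r$, the case $r=\rho$ gives $s=1$ (and then $\|G_t\|_1=1$, which is the contraction property you mention), while $s=\infty$ corresponds to $r=1$, $\rho=\infty$. This does not affect the validity of the main argument.
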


The following Proposition is a generalization of \cite[Lemma 2.2, p 1176]{Ioku}.

\begin{proposition}\label{pre} Let $1\leqslant q\leqslant p,\,\,1\leqslant r\leqslant\infty.$ Then the following estimates hold:
\begin{enumerate}
  \item [(i)] $\|{\rm e}^{t\Delta}\varphi\|_{\exp L^p}\leqslant  \|\varphi\|_{\exp L^p},\; \forall\; t>0,$\; $\forall\; \varphi\in \exp L^p.$
  \item [(ii)] $\|{\rm e}^{t\Delta}\varphi\|_{\exp L^p}\leqslant  \,t^{-\frac{N}{2q}}\left(\log(t^{-\frac{N}{2}}+1)\right)
    ^{-\frac{1}{p}} \|\varphi\|_{q},\; \forall\; t>0,$\; $\forall\; \varphi\in L^q.$
  \item [(iii)] $ \|{\rm e}^{t\Delta}\varphi\|_{\exp L^p}\leqslant  \frac{1}{\left(\log 2\right)^{{1\over p}}}\,\left[\,t^{-\frac{N}{2r}}\|\varphi\|_{r}+\|\varphi\|_{q}\right], \; \forall\; t>0,$ $\forall\; \varphi\in L^r\cap L^q.$
\end{enumerate}
\end{proposition}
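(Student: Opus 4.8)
The plan is to establish the three estimates separately, in increasing order of difficulty, using only Proposition \ref{LPLQQ}, Lemma \ref{Orl-Leb}(iv), and \eqref{med1}.

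For (i), I would exploit that ${\rm e}^{t\Delta}$ is convolution against the probability density $G_t$, together with the convexity of $\phi(s)={\rm e}^{s^p}-1$ (valid for $p\geq 1$, as $s\mapsto s^p$ is convex and increasing on $\R^+$ and is composed with ${\rm e}^{\cdot}$). Set $K=\|\varphi\|_{\exp L^p}$, the case $\varphi=0$ being trivial. From $|{\rm e}^{t\Delta}\varphi(x)|\leq({\rm e}^{t\Delta}|\varphi|)(x)=\int_{\R^N}G_t(x-y)|\varphi(y)|\,dy$ and Jensen's inequality applied to $\phi$,
\[
\phi\!\left(\frac{|{\rm e}^{t\Delta}\varphi(x)|}{K}\right)\leq\int_{\R^N}G_t(x-y)\,\phi\!\left(\frac{|\varphi(y)|}{K}\right)dy .
\]
Integrating in $x$, using Fubini and $\int_{\R^N}G_t=1$, and then \eqref{med1}, gives $\int_{\R^N}\phi(|{\rm e}^{t\Delta}\varphi|/K)\,dx\leq 1$, hence $\|{\rm e}^{t\Delta}\varphi\|_{\exp L^p}\leq K$.

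For (ii), which is the heart of the proposition, I would expand the Orlicz functional as a power series: for any $\lambda>0$,
\[
\int_{\R^N}\left({\rm e}^{|{\rm e}^{t\Delta}\varphi|^p/\lambda^p}-1\right)dx=\sum_{k=1}^\infty\frac{1}{k!\,\lambda^{pk}}\,\|{\rm e}^{t\Delta}\varphi\|_{pk}^{pk}.
\]
Since $1\leq q\leq p\leq pk$ for every $k\geq 1$, the $L^q$–$L^{pk}$ smoothing estimate \eqref{x} gives $\|{\rm e}^{t\Delta}\varphi\|_{pk}\leq t^{-\frac{N}{2}(\frac1q-\frac1{pk})}\|\varphi\|_q$, so that $\|{\rm e}^{t\Delta}\varphi\|_{pk}^{pk}\leq t^{N/2}\bigl(t^{-N/(2q)}\|\varphi\|_q\bigr)^{pk}$, with the factor $t^{N/2}$ pulled out uniformly in $k$. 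Substituting and resumming the exponential series,
\[
\int_{\R^N}\left({\rm e}^{|{\rm e}^{t\Delta}\varphi|^p/\lambda^p}-1\right)dx\leq t^{N/2}\left({\rm e}^{(t^{-N/(2q)}\|\varphi\|_q/\lambda)^p}-1\right).
\]
The right-hand side is $\leq 1$ exactly when $t^{-N/(2q)}\|\varphi\|_q/\lambda\leq\bigl(\log(t^{-N/2}+1)\bigr)^{1/p}$, i.e. when $\lambda\geq t^{-N/(2q)}\bigl(\log(t^{-N/2}+1)\bigr)^{-1/p}\|\varphi\|_q$; the definition of the Luxemburg norm then yields (ii).

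For (iii), I would simply apply the embedding $L^q\cap L^\infty\hookrightarrow\exp L^p_0$ of Lemma \ref{Orl-Leb}(iv) to $u={\rm e}^{t\Delta}\varphi$ — legitimate since $1\leq q\leq p$ — to get $\|{\rm e}^{t\Delta}\varphi\|_{\exp L^p}\leq(\log 2)^{-1/p}\bigl(\|{\rm e}^{t\Delta}\varphi\|_q+\|{\rm e}^{t\Delta}\varphi\|_\infty\bigr)$, and then bound $\|{\rm e}^{t\Delta}\varphi\|_q\leq\|\varphi\|_q$ and $\|{\rm e}^{t\Delta}\varphi\|_\infty\leq t^{-N/(2r)}\|\varphi\|_r$ by \eqref{x} (with $r=\rho=q$, resp. $\rho=\infty$). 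The only genuinely delicate point is the bookkeeping in (ii): one must check that the power of $t$ in $\|{\rm e}^{t\Delta}\varphi\|_{pk}^{pk}$ separates as $t^{N/2}$ times $(t^{-N/(2q)}\|\varphi\|_q)^{pk}$ so that the $k$-sum is still an exponential, and that the constraint $q\leq p$ is precisely what guarantees the $L^q$–$L^{pk}$ estimate holds for all $k\geq 1$; parts (i) and (iii) are then routine.
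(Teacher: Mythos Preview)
Your proof is correct and, for parts (ii) and (iii), follows the paper's argument essentially verbatim: the series expansion of the Orlicz functional combined with the $L^q$--$L^{pk}$ smoothing estimate in (ii), and the embedding \eqref{embed} plus the $L^r$--$L^\infty$ and $L^q$--$L^q$ estimates in (iii).

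For part (i) there is a small but worthwhile difference. The paper also expands $\int(\exp|{\rm e}^{t\Delta}\varphi/\alpha|^p-1)\,dx$ as $\sum_{k\geq 1}\|{\rm e}^{t\Delta}\varphi\|_{pk}^{pk}/(k!\,\alpha^{pk})$ and then applies the $L^{pk}$--$L^{pk}$ contractivity of ${\rm e}^{t\Delta}$ term by term before resumming. Your Jensen argument bypasses the series altogether: since $G_t$ is a probability density and $\phi(s)={\rm e}^{s^p}-1$ is convex, Jensen plus Fubini give $\int\phi(|{\rm e}^{t\Delta}\varphi|/K)\leq\int\phi(|\varphi|/K)$ in one stroke. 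This is slightly cleaner and has the advantage of working for any Orlicz function $\phi$ (not just those with a convenient Taylor expansion), whereas the paper's series approach has the advantage of being the same mechanism used in (ii), so the three parts are more uniform in presentation. Both are entirely standard.
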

\begin{proof}[{Proof of Proposition \ref{pre}}] We begin by proving (i). For any $\alpha>0,$  expanding the exponential function leads to
\begin{eqnarray*}
 \int_{\R^N}\bigg(\exp\left|\frac{{\rm e}^{t\Delta}\varphi}{\alpha}\right|^p-1\bigg)dx &=& \sum_{k=1}^{\infty}
\frac{\|{\rm e}^{t\Delta}\varphi\|^{pk}_{{pk}}}{k! \alpha^{pk}}.
\end{eqnarray*}
Then by the $L^{pk}-L^{pk}$ estimate of the heat semi-group \eqref{x}, we obtain
\begin{eqnarray*}
\int_{\R^N}\bigg(\exp\left|\frac{{\rm e}^{t\Delta}\varphi}{\alpha}\right|^p-1\bigg)dx
&\leqslant & \sum_{k=1}^{\infty}\frac{\|\varphi\|^{pk}_{{pk}}}{k! \alpha^{pk}}\\
&=&\int_{\R^N}\left(\exp\left|\frac{\varphi}{\alpha}\right|^p-1\right)dx.
\end{eqnarray*}
Therefore we obtain
\begin{eqnarray*}
  \|{\rm e}^{t\Delta}\varphi\|_{\EP} &=& \inf\left\{\alpha>0,\,\,\,\int_{\R^N}\left(\exp
  \left|\frac{{\rm e}^{t\Delta}\varphi}{\alpha}\right|^p-1\right)dx \leq1\right\} \\
  &\leqslant &\inf\left\{\alpha>0,\,\,\,\int_{\R^N}\left(\exp\left|\frac{\varphi}{\alpha}\right|^p-1\right)dx \leq1\right\}\\
  &{=}& \|\varphi\|_{\EP}.
\end{eqnarray*}
This proves (i).

We now turn to the proof of (ii). Using \eqref{x} with $q\leq p,$ we have
 \begin{eqnarray*}
 \int_{\R^N}\left(\exp\left|\frac{{\rm e}^{t\Delta}\varphi}{\alpha}\right|^p-1\right)dx
 &=&\sum_{k=1}^{\infty}\frac{\|{\rm e}^{t\Delta}\varphi\|^{pk}_{{pk}}}{k!\alpha^{pk}}\nonumber\\
 &\leqslant & \sum_{k=1}^{\infty}\frac{t^{-\frac{N}{2}(\frac{1}{q}-\frac{1}{p k})p k}\|\varphi\|^{pk}_{q}}{k!\alpha^{pk}}\nonumber\\
  &=&t^{\frac{N}{2}}\left(\exp\left(\frac{ t^{-{\frac{N}{2q}}}\|\varphi\|_{q}}{\alpha}\right)^p-1\right).
\end{eqnarray*}
It follows that
$$
  \|{\rm e}^{t\Delta}\varphi\|_{\EP} \leq t^{-\frac{N}{2q}}\left(\log(t^{-\frac{N}{2}}+1)\right)^{-\frac{1}{p}} \|\varphi\|_{q}.
$$
This proves (ii).

We now prove (iii). By the embedding $L^q\cap L^{\infty}\hookrightarrow
\EP$ \eqref{embed}, we have
\begin{eqnarray*}
   \|{\rm e}^{t\Delta}\varphi\|_{\EP}&\leqslant & {1\over (\log2)^{1/p}}\left[ \|{\rm e}^{t\Delta}\varphi\|_{{\infty}}+
   \|{\rm e}^{t\Delta}\varphi\|_{q}\right].
\end{eqnarray*}
Using the  $L^r-L^{\infty}$ estimate \eqref{x}, we get
\begin{eqnarray*}
  \|{\rm e}^{t\Delta}\varphi\|_{\EP} &\leqslant &{1\over (\log2)^{1/p}}\left[t^{-\frac{N}{2r}} \|\varphi\|_{r}+\|\varphi\|_{q}\right].
\end{eqnarray*}
This proves (iii). The proof of the proposition is now complete.
\end{proof}

As a consequence we have the following, the proof of which can be done as in \cite{MOT}.
\begin{corollary}
\label{lemme estimates}
Let $p>1,\; N> {2p\over p-1}, \; r> {N\over 2}.$ Then, for every $g\in L^1\cap L^r,$ we have
 \begin{equation*}
    \|{\rm e}^{t\Delta}g\|_{\exp L^p}\leq \kappa(t)\,\|g\|_{L^1\cap L^r},\; \forall \; t>0,
\end{equation*}
where $\kappa\in L^1(0,\infty)$ is given by
$$\kappa(t)=\frac{1}{\left(\log 2\right)^{{1\over p}}}\min\biggr\{ t^{-\frac{N}{2r}}+1,\; t^{-\frac{N}{2}}\Big(\log(t^{-\frac{N}{2}}+1)\Big)^{-\frac{1}{p}}\biggl\}.$$
\end{corollary}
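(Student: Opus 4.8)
The plan is to combine the two estimates (ii) and (iii) from Proposition \ref{pre} with the interpolation inequality built into $L^1\cap L^r$, and then to check that the resulting time-dependent prefactor $\kappa(t)$ is integrable on $(0,\infty)$. First I would use (iii) of Proposition \ref{pre} with $q=1$: since $g\in L^1\cap L^r$ and $1\leq 1\leq p$, we get
\[
\|{\rm e}^{t\Delta}g\|_{\exp L^p}\leq \frac{1}{(\log 2)^{1/p}}\Big(t^{-\frac{N}{2r}}\|g\|_r+\|g\|_1\Big)\leq \frac{1}{(\log 2)^{1/p}}\big(t^{-\frac{N}{2r}}+1\big)\,\|g\|_{L^1\cap L^r},
\]
which is the first branch of the minimum. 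For the second branch I would apply (ii) of Proposition \ref{pre} with $q=1$ (valid since $1\leq p$), giving
\[
\|{\rm e}^{t\Delta}g\|_{\exp L^p}\leq t^{-\frac{N}{2}}\big(\log(t^{-\frac{N}{2}}+1)\big)^{-\frac{1}{p}}\|g\|_1\leq \frac{1}{(\log 2)^{1/p}}\,t^{-\frac{N}{2}}\big(\log(t^{-\frac{N}{2}}+1)\big)^{-\frac{1}{p}}\|g\|_{L^1\cap L^r},
\]
where the extra harmless factor $(\log 2)^{-1/p}\geq 1$ is inserted only to write both bounds with the same constant. Taking the minimum of the two upper bounds yields $\|{\rm e}^{t\Delta}g\|_{\exp L^p}\leq \kappa(t)\|g\|_{L^1\cap L^r}$ with $\kappa$ as stated.

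It remains to prove $\kappa\in L^1(0,\infty)$, and this is the only real point to check. I would split the integral at $t=1$. On $(0,1)$, I use the first branch: $\int_0^1\big(t^{-N/(2r)}+1\big)\,dt<\infty$ because the hypothesis $r>N/2$ gives $N/(2r)<1$, so $t^{-N/(2r)}$ is integrable near $0$. On $(1,\infty)$, I use the second branch: as $t\to\infty$, $\log(t^{-N/2}+1)\sim t^{-N/2}$, hence
\[
t^{-\frac{N}{2}}\big(\log(t^{-\frac{N}{2}}+1)\big)^{-\frac{1}{p}}\sim t^{-\frac{N}{2}}\cdot t^{\frac{N}{2p}}=t^{-\frac{N}{2}(1-\frac1p)}=t^{-\frac{N(p-1)}{2p}},
\]
and the exponent $\frac{N(p-1)}{2p}>1$ precisely because of the hypothesis $N>\frac{2p}{p-1}$; therefore this term is integrable at infinity. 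Combining the two pieces shows $\kappa\in L^1(0,\infty)$.

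The main (and essentially only) obstacle is verifying the integrability of $\kappa$ at the two endpoints, which is exactly where the two structural hypotheses $r>N/2$ and $N>2p/(p-1)$ are used — the former at $t=0$ for the polynomial branch, the latter at $t=\infty$ for the logarithmic branch. Everything else is a direct quotation of Proposition \ref{pre}. As the statement notes, the details follow the pattern of the corresponding computation in \cite{MOT}.
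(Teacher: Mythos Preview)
Your proposal is correct and follows essentially the same approach as the paper: both apply Proposition~\ref{pre}(ii) and (iii) with $q=1$ to obtain the two branches of $\kappa(t)$, then take the minimum. Your treatment of the integrability of $\kappa$ is in fact more detailed than the paper's (which simply asserts $\kappa\in L^1(0,\infty)$ from the hypotheses $N>2p/(p-1)$ and $r>N/2$), and your explicit identification of which hypothesis controls which endpoint is exactly right.
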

Here we use $\|g\|_{L^1\cap L^q}=\|g\|_{1}+\|g\|_{q}.$
\begin{proof}[{Proof of Corollary \ref{lemme estimates}}]
We have, by Proposition \ref{pre} (ii) with $q=1$,
\begin{equation}\label{ii9}
    \|{\rm e}^{t\Delta}g\|_{\EP}\leq t^{-\frac{N}{2}}\Big(\log(t^{-\frac{N}{2}}+1)\Big)^{-\frac{1}{p}}\,\|g\|_{1}.
\end{equation}
Using Proposition \ref{pre} (iii) with $q=1$, we get
\begin{equation}\label{iii99}
    \|{\rm e}^{t\Delta}g\|_{\EP}\leq \frac{1}{\left(\log 2\right)^{{1\over p}}}\,\Big( t^{-\frac{N}{2r}}+1 \Big)\Big[\|g\|_{r}+\|g\|_{1}\Big].
\end{equation}
Combining the inequalities \eqref{ii9} and \eqref{iii99}, we obtain
\begin{equation}\label{2.8}
    \|{\rm e}^{t\Delta}g\|_{\EP}\leq \kappa(t)\;\Big(\|g\|_{1}+\|g\|_{r}\Big).
\end{equation}
By the assumption $N> {2p\over p-1},\,\,r>\frac{N}{2}$, we can see that $\kappa \in L^{1}(0,\,\infty).$
\end{proof}

We will also need the following result for the proofs.
\begin{proposition}\label{continu0}
 If $u_0\in \exp L^p_0$ then  ${\rm e}^{t\Delta}u_0 \in C([0,\infty); \exp L^p_0).$
\end{proposition}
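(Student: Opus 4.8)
The plan is to prove continuity of $t\mapsto {\rm e}^{t\Delta}u_0$ at every $t_0\geq 0$ in the norm of $\exp L^p$, and then to observe that the orbit stays in $\exp L^p_0$. First I would dispose of the easy part: since $\exp L^p_0$ is, by \eqref{exp L0}, exactly the set of $u\in L^1_{loc}$ for which $\int_{\R^N}({\rm e}^{\alpha|u|^p}-1)\,dx<\infty$ for all $\alpha>0$, and since for $t>0$ the function ${\rm e}^{t\Delta}u_0$ is smooth and, using \eqref{2.1} together with Proposition \ref{LPLQQ}, all its $L^{pk}$ norms are finite and controlled by $\|u_0\|_{\exp L^p}$, one checks that ${\rm e}^{t\Delta}u_0\in\exp L^p_0$ for every $t\geq 0$ (for $t=0$ this is the hypothesis). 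Indeed the computation in the proof of Proposition \ref{pre}(i) already shows $\int_{\R^N}({\rm e}^{\alpha^{-p}|{\rm e}^{t\Delta}u_0|^p}-1)\,dx\leq\int_{\R^N}({\rm e}^{\alpha^{-p}|u_0|^p}-1)\,dx$ for every $\alpha>0$, so membership in $\exp L^p_0$ is inherited from $u_0$.

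Next, for the continuity, I would first treat initial data $\varphi\in C_0^\infty(\R^N)$. For such $\varphi$ one has $\varphi\in L^q\cap L^\infty$ for all $q$, the standard heat semigroup theory gives ${\rm e}^{t\Delta}\varphi\to {\rm e}^{t_0\Delta}\varphi$ in $L^q\cap L^\infty$ as $t\to t_0$ (including $t_0=0$, by uniform continuity of $\varphi$ and the $L^1$-contraction of $G_t$), and then the embedding inequality \eqref{embed} from Lemma \ref{Orl-Leb}(iv),
$$
\|{\rm e}^{t\Delta}\varphi-{\rm e}^{t_0\Delta}\varphi\|_{\exp L^p}\leq \frac{1}{(\log 2)^{1/p}}\Big(\|{\rm e}^{t\Delta}\varphi-{\rm e}^{t_0\Delta}\varphi\|_{q}+\|{\rm e}^{t\Delta}\varphi-{\rm e}^{t_0\Delta}\varphi\|_{\infty}\Big),
$$
shows ${\rm e}^{t\Delta}\varphi\to{\rm e}^{t_0\Delta}\varphi$ in $\exp L^p$. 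Hence $t\mapsto{\rm e}^{t\Delta}\varphi$ is continuous into $\exp L^p_0$ for $\varphi\in C_0^\infty$.

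For general $u_0\in\exp L^p_0$ I would use a density-plus-uniform-equicontinuity argument. Given $\eps>0$, pick $\varphi\in C_0^\infty(\R^N)$ with $\|u_0-\varphi\|_{\exp L^p}<\eps$. By Proposition \ref{pre}(i) the semigroup is a contraction on $\exp L^p$, so $\|{\rm e}^{t\Delta}u_0-{\rm e}^{t\Delta}\varphi\|_{\exp L^p}\leq\|u_0-\varphi\|_{\exp L^p}<\eps$ uniformly in $t\geq 0$. Then for $t$ near $t_0$,
$$
\|{\rm e}^{t\Delta}u_0-{\rm e}^{t_0\Delta}u_0\|_{\exp L^p}\leq 2\eps+\|{\rm e}^{t\Delta}\varphi-{\rm e}^{t_0\Delta}\varphi\|_{\exp L^p},
$$
and the last term tends to $0$ by the smooth case; this gives continuity at $t_0$. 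Combined with the first paragraph, the orbit lies in $\exp L^p_0$ and is continuous, i.e. ${\rm e}^{t\Delta}u_0\in C([0,\infty);\exp L^p_0)$.

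The only subtle point — the "hard part," though it is mild — is continuity at $t_0=0$, since the heat semigroup is famously \emph{not} strongly continuous at $0$ on the non-separable space $\exp L^p$; this is precisely why one must restrict to the closed subspace $\exp L^p_0=\overline{C_0^\infty}^{\,\|\cdot\|_{\exp L^p}}$. The density argument above is exactly what makes $t=0$ work: strong continuity at $0$ holds on $C_0^\infty$ (by Lemma \ref{Orl-Leb}(iv) and elementary heat-kernel estimates), and the uniform contraction bound of Proposition \ref{pre}(i) propagates it to the closure. One should double-check that \eqref{embed} is applied with a fixed admissible $q$, say $q=p$, and that ${\rm e}^{t\Delta}\varphi\in L^p\cap L^\infty$ with the claimed convergence as $t\searrow 0$; both are standard.
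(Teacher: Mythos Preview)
Your proof is correct. The paper actually states Proposition \ref{continu0} without proof; the sentence about rearrangement that follows refers to the discontinuity result (Proposition \ref{pdiscontinuite}), not to this one. Your argument is the natural one and presumably what the authors have in mind: invariance of $\exp L^p_0$ under ${\rm e}^{t\Delta}$ via the pointwise inequality from the proof of Proposition \ref{pre}(i), strong continuity on the dense subspace $C_0^\infty$ via the embedding \eqref{embed}, and then extension to all of $\exp L^p_0$ by the uniform contraction bound of Proposition \ref{pre}(i). Nothing is missing.
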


It is known that ${\rm e}^{t\Delta}$ is a $C^0-$semigroup on $L^p.$ By Proposition \ref{continu0}, it is also a $C^0-$semigroup on $ \exp L^p_0.$ This  is not the case on $\exp L^p.$  We have the following result.

\begin{proposition}
\label{pdiscontinuite}
There exist $u_0\in \exp L^p$ and a constant $C>0$ such that
\begin{equation}
\label{ineq1}
 \|{\rm e}^{t\Delta} u_0-u_0\|_{\exp L^p} \geqslant C,\; \forall\; t>0.
\end{equation}
\end{proposition}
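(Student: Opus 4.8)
The plan is to exhibit an explicit $u_0\in\exp L^p$ that does \emph{not} lie in $\exp L^p_0$, and to show that the heat flow cannot approach it in the Luxemburg norm as $t\to 0$. The natural candidate, in view of part (i) of Lemma \ref{Orl-Leb}, is (a suitable multiple of) the function
$$
u_0(x)=\bigl(-\log|x|\bigr)^{1/p}\,\mathbf{1}_{\{|x|\le 1\}},
$$
for which one already knows $u_0\in\exp L^p\setminus\exp L^p_0$. The key point is quantitative: I would first record that for this $u_0$ the modular $\int_{\R^N}\bigl({\rm e}^{|u_0|^p/\alpha^p}-1\bigr)\,dx$ is \emph{finite} for $\alpha>N^{-1/p}$ but $=+\infty$ for $\alpha\le N^{-1/p}$ (this is the computation in the proof of Lemma \ref{Orl-Leb}(i), since near the origin ${\rm e}^{|u_0|^p/\alpha^p}=|x|^{-1/\alpha^p}$, which is integrable in $\R^N$ iff $1/\alpha^p<N$). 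In particular $\|u_0\|_{\exp L^p}=N^{-1/p}$, and the infimum defining the norm is \emph{not attained}: at $\lambda=\|u_0\|_{\exp L^p}$ the modular is already infinite. This non-attainment, i.e. the fact that $u_0$ "sits exactly on the boundary", is what will be exploited.

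Next I would argue by contradiction. Suppose ${\rm e}^{t\Delta}u_0\to u_0$ in $\exp L^p$ as $t\to0$. Write $v_t={\rm e}^{t\Delta}u_0$. For fixed $t>0$, $v_t$ is smooth and bounded (indeed $\|v_t\|_\infty\le t^{-N/(2p)}\|u_0\|_p<\infty$ by Proposition \ref{LPLQQ}, using $u_0\in L^p$), hence $v_t\in L^q\cap L^\infty$ for every $q$, so by Lemma \ref{Orl-Leb}(iv) $v_t\in\exp L^p_0$; more importantly, because $v_t$ is bounded, the modular $\int\bigl({\rm e}^{|v_t|^p/\alpha^p}-1\bigr)\,dx$ is finite for \emph{every} $\alpha>0$, and in particular $\le\frac12$ for some $\alpha=\alpha(t)<\|u_0\|_{\exp L^p}$... but this does not immediately give a contradiction, so the real mechanism must be monotone/pointwise. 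The cleaner route: since $G_t\star u_0\ge 0$ and, by the explicit lower bound on the heat kernel on $|x|\le 2\sqrt t$, one has $(G_t\star u_0)(x)\ge c\,(-\log|x|)^{1/p}$ for $|x|$ small — in fact $G_t\star u_0 \to u_0$ pointwise a.e. and $G_t\star u_0\ge$ (a fixed fraction of) $u_0$ near $0$ is false in general, so instead I would use Fatou: for any fixed $\lambda<N^{-1/p}$,
$$
\liminf_{t\to0}\int_{\R^N}\Bigl({\rm e}^{|v_t(x)|^p/\lambda^p}-1\Bigr)\,dx\ \ge\ \int_{\R^N}\Bigl({\rm e}^{|u_0(x)|^p/\lambda^p}-1\Bigr)\,dx=+\infty,
$$
using $v_t\to u_0$ a.e. (standard for the heat semigroup on $L^1_{loc}$ data that are, say, in $L^p$) and Fatou's lemma. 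On the other hand, if $\|v_t-u_0\|_{\exp L^p}\to0$, then for $t$ small $\|v_t\|_{\exp L^p}\le \|u_0\|_{\exp L^p}+o(1)=N^{-1/p}+o(1)$; choosing $\lambda$ slightly above this but still... here one must be careful because $\lambda$ must be $<N^{-1/p}$ for the right-hand side to blow up while $\ge\|v_t\|_{\exp L^p}$ for the left-hand side to be $\le1$. To reconcile these I would instead \emph{scale up}: replace $u_0$ by $\beta u_0$ with $\beta>1$, so that $\|\beta u_0\|_{\exp L^p}=\beta N^{-1/p}>N^{-1/p}$; then picking $\lambda$ with $N^{-1/p}<\lambda<\beta N^{-1/p}$ makes the modular of $\beta u_0$ at level $\lambda$ infinite (since $\lambda/\beta<N^{-1/p}$... wait, need $\lambda< \beta N^{-1/p}$ so that $(\beta u_0/\lambda)$ still forces the integrand $|x|^{-\beta^p/\lambda^p}$ with $\beta^p/\lambda^p>1/N^{... }$) — the condition is $\lambda<\beta N^{-1/p}$, which is exactly available in the open interval $(N^{-1/p},\beta N^{-1/p})$. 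With such $\lambda$: if ${\rm e}^{t\Delta}(\beta u_0)\to\beta u_0$ in $\exp L^p$, then for small $t$, $\int\bigl({\rm e}^{|v_t|^p/\lambda^p}-1\bigr)\le 1$ by definition of the norm and $\|v_t\|_{\exp L^p}<\lambda$; but Fatou forces $\liminf$ of this integral to be $\ge\int\bigl({\rm e}^{|\beta u_0|^p/\lambda^p}-1\bigr)=+\infty$, a contradiction. Hence \eqref{ineq1} holds, with $u_0$ this scaled profile, for a suitable $C>0$ — concretely any $C<\beta N^{-1/p}-N^{-1/p}$ works, since $\|v_t-u_0\|_{\exp L^p}\not\to0$ actually has to be promoted to a uniform lower bound; one gets that from the same Fatou argument applied with each fixed $\lambda$ in the interval, yielding $\|v_t-u_0\|_{\exp L^p}\ge$ const.

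The main obstacle is the last bookkeeping step: converting "the convergence fails" into a \emph{uniform-in-$t$} lower bound $\|{\rm e}^{t\Delta}u_0-u_0\|_{\exp L^p}\ge C$. The Fatou argument as stated shows $\limsup_{t\to0}$ of the norm difference is positive, and by semigroup continuity for $t$ away from $0$ (Proposition \ref{LPLQQ} type bounds make $t\mapsto {\rm e}^{t\Delta}u_0$ continuous into $L^p$, hence one can rule out the difference vanishing for large $t$ as well), one upgrades this to a bound valid for all $t>0$. I would handle this by noting that $w_t:={\rm e}^{t\Delta}u_0-u_0$ satisfies: for every $\lambda$ in the chosen open interval, $\int\bigl({\rm e}^{|{\rm e}^{t\Delta}u_0|^p/\lambda^p}-1\bigr)\,dx=+\infty$ is \emph{false} (it's finite since ${\rm e}^{t\Delta}u_0$ is bounded), so the obstruction is purely that ${\rm e}^{t\Delta}u_0$ cannot carry the singularity of $u_0$; quantitatively, $\|u_0\|_{\exp L^p}\le \|{\rm e}^{t\Delta}u_0\|_{\exp L^p}+\|w_t\|_{\exp L^p}$, and since $\|{\rm e}^{t\Delta}u_0\|_{\exp L^p}<N^{-1/p}=\|u_0\|_{\exp L^p}$ \emph{strictly} for every $t>0$ (because the modular of the bounded function ${\rm e}^{t\Delta}u_0$ at level $N^{-1/p}$ is finite, in fact $<1$... actually one needs it $\le 1$ and strict monotonicity — this requires that the modular at $\lambda=N^{-1/p}$ be $<1$, which needs a small explicit estimate on the $L^p$-mass of $u_0$ that can be arranged by the $\beta$-scaling), the triangle inequality yields $\|w_t\|_{\exp L^p}\ge N^{-1/p}-\sup_{t>0}\|{\rm e}^{t\Delta}u_0\|_{\exp L^p}=:C>0$, which is \eqref{ineq1}. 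The delicate quantitative input is thus establishing $\sup_{t>0}\|{\rm e}^{t\Delta}u_0\|_{\exp L^p}<\|u_0\|_{\exp L^p}$; by Proposition \ref{pre}(i) we already have "$\le$", and strictness follows because equality in that estimate would force the modular of ${\rm e}^{t\Delta}u_0$ at the critical level to equal $1$, impossible for a bounded function whose modular there is finite and can be made $<1$ after the harmless rescaling. This is the step I expect to require the most care.
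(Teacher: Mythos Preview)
Your proposal contains a genuine gap at the crucial final step. The triangle-inequality route
\[
\|{\rm e}^{t\Delta}u_0-u_0\|_{\exp L^p}\ \ge\ \|u_0\|_{\exp L^p}-\|{\rm e}^{t\Delta}u_0\|_{\exp L^p}
\]
cannot produce a uniform positive lower bound: by lower semi-continuity (Proposition~\ref{fatou}\,(ii)) and the a.e.\ convergence ${\rm e}^{t\Delta}u_0\to u_0$, one has $\liminf_{t\to0}\|{\rm e}^{t\Delta}u_0\|_{\exp L^p}\ge\|u_0\|_{\exp L^p}$; together with Proposition~\ref{pre}\,(i) this forces $\sup_{t>0}\|{\rm e}^{t\Delta}u_0\|_{\exp L^p}=\|u_0\|_{\exp L^p}$, so the constant you extract is $0$. (Incidentally, $\|u_0\|_{\exp L^p}$ is \emph{not} $N^{-1/p}$: that is the threshold for the modular to be finite, not $\le 1$; the actual Luxemburg norm is strictly larger.) Your attempt to rescue strictness by arguing that the modular of the bounded function ${\rm e}^{t\Delta}u_0$ at the critical level is ``$<1$'' is exactly the point that fails as $t\to 0$.

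The clean repair is already implicit in your own remarks and bypasses all the modular computations: you note that ${\rm e}^{t\Delta}u_0\in L^p\cap L^\infty\hookrightarrow\exp L^p_0$ for every $t>0$. Since $\exp L^p_0$ is a closed subspace of $\exp L^p$ and $u_0\notin\exp L^p_0$, the distance $\operatorname{dist}(u_0,\exp L^p_0)>0$, and therefore
\[
\|{\rm e}^{t\Delta}u_0-u_0\|_{\exp L^p}\ \ge\ \operatorname{dist}(u_0,\exp L^p_0)=:C>0\qquad\forall\,t>0.
\]
This gives \eqref{ineq1} immediately, with no Fatou, no scaling by $\beta$, and no separate treatment of small and large $t$.

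For comparison, the paper does not argue this way: it states that the proof proceeds via the decreasing rearrangement of $u_0$ (following \cite{MOT}), which yields explicit pointwise control of ${\rm e}^{t\Delta}u_0$ and hence a direct lower bound on the Luxemburg norm of the difference. Your approach---once fixed as above---is more elementary and relies only on the closedness of $\exp L^p_0$ and Lemma~\ref{Orl-Leb}\,(iv), at the cost of giving no explicit value for $C$.
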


The proof of the previous proposition uses the notion of rearrangement of functions and can be done as in \cite{MOT}.

\section{Local well-posedness}

 In this section we prove the existence and the uniqueness of solution to \eqref{1.1} in $C([0,T]; \exp L^p_0)$ for some $T>0$, namely Theorem \ref{local}. Throughout this section we assume that the nonlinearity $f :\R\to\R$ satisfies $f(0)=0$ and
\begin{equation}\label{Nonlin}
|f(u)-f(v)|\leq C|u-v|\left({\rm e}^{\lambda \,|u|^p}+{\rm e}^{\lambda \,|v|^p}\right),\quad\forall\;u,\; v\in\R
\end{equation}
for some constants $C>0,\;\lambda>0\; p\geq 1$. We emphasize that, thanks to Proposition \ref{step1}, the Cauchy problem \eqref{1.1} admits the equivalent integral formulation \eqref{integral}. This is formulated as follows.
\begin{proposition}\label{4.1} Let $T > 0$ and $u_0$ be in $\exp L^p_0$. If $u$ belongs to $C([0,T]; \exp L^p_0)$, then $u$ is a weak solution of \eqref{1.1} if and only if $u(t)$ satisfies the integral equation \eqref{integral} for any  $t\in(0, T )$.
\end{proposition}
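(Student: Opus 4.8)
The plan is to establish the equivalence between the notion of weak solution (Definition \ref{weak}) and the integral formulation \eqref{integral} for a function $u \in C([0,T];\exp L^p_0)$. The key structural facts I would rely on are: first, Corollary \ref{step1}, which guarantees that $t \mapsto f(u(t))$ belongs to $C([0,T];L^r)$ for every $p \leq r < \infty$; second, Proposition \ref{continu0} and the standard smoothing properties of the heat semigroup on Lebesgue spaces (Proposition \ref{LPLQQ}); and third, the fact that $C_0^\infty(\R^N)$ is dense in $\exp L^p_0$ together with the lower semicontinuity/monotonicity properties in Proposition \ref{fatou}.

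For the direction "integral equation $\Rightarrow$ weak solution", I would proceed as follows. Suppose $u$ satisfies \eqref{integral} for all $t \in (0,T)$. Since $f(u) \in C([0,T];L^r)$ for some fixed $r$ with $p \leq r < \infty$ (say $r=p$), the Duhamel term $v(t) := \int_0^t {\rm e}^{(t-s)\Delta}f(u(s))\,ds$ is a classical object: one checks that $v \in C([0,T];L^r)$, that $v(t) \to 0$ in $L^r$ as $t \to 0$, and that $v$ solves $\partial_t v - \Delta v = f(u)$ in the sense of distributions on $(0,T)\times \R^N$ — this is the standard parabolic regularity argument for the inhomogeneous heat equation with an $L^r$-valued, time-continuous source, and I would cite it rather than redo it. Likewise ${\rm e}^{t\Delta}u_0$ solves the homogeneous heat equation distributionally and, by Proposition \ref{continu0}, lies in $C([0,\infty);\exp L^p_0)$ with ${\rm e}^{t\Delta}u_0 \to u_0$ in $\exp L^p_0$, hence in the weak$^*$ topology. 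Adding the two pieces shows $u$ verifies \eqref{1.1} in $\mathcal D'$ and $u(t) \to u_0$ weak$^*$ as $t \searrow 0$; since by hypothesis $u \in C([0,T];\exp L^p_0)$, $u$ is a weak solution.

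For the converse, suppose $u \in C([0,T];\exp L^p_0)$ is a weak solution. Set $w(t) := u(t) - {\rm e}^{t\Delta}u_0 - \int_0^t {\rm e}^{(t-s)\Delta}f(u(s))\,ds$. Using Corollary \ref{step1} and the embedding $\exp L^p_0 \hookrightarrow L^r$ (Lemma \ref{sarah55}), every term makes sense in $L^r$ for suitable $r$, and $w$ solves the homogeneous heat equation $\partial_t w - \Delta w = 0$ in $\mathcal D'((0,T)\times\R^N)$ with $w(t) \to 0$ weak$^*$ as $t \searrow 0$. The task is then a uniqueness statement: a distributional solution of the heat equation on $(0,T)\times\R^N$ that is, say, bounded with values in $L^1 + L^\infty$ and converges weakly to $0$ at $t=0$ must vanish identically. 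I would obtain this by testing against the backward heat kernel (or by the representation/uniqueness theory for the heat equation within the relevant growth class), concluding $w \equiv 0$, which is precisely \eqref{integral}.

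The main obstacle I anticipate is the converse direction, specifically making the uniqueness-of-weak-solutions step rigorous in the right function-space framework: one must be careful that $u(t)$, a priori only in $\exp L^p_0 \subset L^1_{loc}$ with the $L^1+L^\infty$ control from Proposition \ref{fatou}(i), lies in a class where distributional solutions of the heat equation are uniquely determined by their (weak$^*$) initial trace. The cleanest route is to note that all three terms defining $w$ actually lie in $C([0,T];L^r)$ for some finite $r \geq p$ — this is where Corollary \ref{step1} and Lemma \ref{sarah55} do the real work — so that $w \in C([0,T];L^r)$ with $w(0) = 0$, and then invoke standard $L^r$-uniqueness for the heat equation. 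I would present this last invocation by reference to classical parabolic theory rather than proving it from scratch.
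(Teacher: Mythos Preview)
Your proposal is correct and aligns with what the paper does: the paper does not give a detailed proof of Proposition~\ref{4.1} but simply states that the equivalence holds ``thanks to'' Corollary~\ref{step1}, i.e., the fact that $f(u)\in C([0,T];L^r)$ for $p\le r<\infty$. Your argument makes explicit precisely this standard route---reduce both directions to the $L^r$ setting via Corollary~\ref{step1} and Lemma~\ref{sarah55}, then invoke classical parabolic theory (distributional solutions of the inhomogeneous heat equation via Duhamel, and $L^r$-uniqueness for the homogeneous equation with zero initial trace)---so there is nothing to contrast.
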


Now we are ready to prove Theorem \ref{local}. The idea is to split the initial data $u_0\in\EPO$ into a small part in $\EP$ and a smooth one. This will be done using the density of $C^\infty_0(\R^N)$ in $\EPO$. First we solve the initial value problem with smooth initial data to obtain a local and bounded solution $v$. Then we consider the perturbed equation satisfied by $w := u-v$ and with small initial data. Now we come to the details. For $\varepsilon>0$ to be chosen later, we write $u_0=v_0+w_0$, where $v_0\in C_0^\infty(\R^N)$ and $\|w_0\|_{\EP}\leq \varepsilon$. Then, we consider the two Cauchy problems:
\begin{equation*}
(\mathcal{P}_1)\qquad\qquad\left\{
\begin{array}{ll}
\partial_t v-\Delta v=f(v), & \qquad t>0,\,x\in\R^N,\qquad\qquad\qquad\\
v(0)=v_0,&
\end{array}
\right.
\end{equation*}

and

\begin{equation*}
(\mathcal{P}_2)\qquad\qquad\left\{
\begin{array}{ll}
\partial_t w-\Delta w=f(w+v)-f(v), & \qquad t>0,\,x\in\R^N, \\
w(0)=w_0.&
  \end{array}
\right.
\end{equation*}

We first, prove the following existence result concerning $(\mathcal{P}_1).$
\begin{proposition}\label{C0} Let $v_0\in L^p\cap L^{\infty}$. Then there exist a time $T>0$ and a solution $v\in C([0,T],\EPO)\cap L^{\infty}(0,T;L^{\infty})$ to $(\mathcal{P}_1)$.
\end{proposition}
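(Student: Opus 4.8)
The plan is to solve $(\mathcal{P}_1)$ by a standard fixed-point argument in the Banach space $X_T := C([0,T];L^p\cap L^\infty)$ equipped with the norm $\|v\|_{X_T} = \sup_{0\le t\le T}\big(\|v(t)\|_p + \|v(t)\|_\infty\big)$, for $T>0$ small. Define the map $\Phi$ on a closed ball $B_M = \{v\in X_T : \|v\|_{X_T}\le M\}$ by
\begin{equation*}
\Phi(v)(t) = {\rm e}^{t\Delta}v_0 + \int_0^t {\rm e}^{(t-s)\Delta}f(v(s))\,ds.
\end{equation*}
First I would fix $M := 2\big(\|v_0\|_p + \|v_0\|_\infty\big)$, so that $\|{\rm e}^{t\Delta}v_0\|_p + \|{\rm e}^{t\Delta}v_0\|_\infty \le M/2$ for all $t\ge 0$ by the contraction property of the heat semigroup on $L^p$ and $L^\infty$ (Proposition \ref{LPLQQ} with $r=\rho$). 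Then for $v\in B_M$, the nonlinearity is controlled pointwise: from \eqref{Nonlin} with $v=0$ and $f(0)=0$ one gets $|f(v)|\le 2C|v|{\rm e}^{\lambda|v|^p}\le 2C|v|{\rm e}^{\lambda M^p}$, so $\|f(v(s))\|_p + \|f(v(s))\|_\infty \le 2C{\rm e}^{\lambda M^p}M =: C(M)$. Applying the $L^r$–$L^r$ bound of Proposition \ref{LPLQQ} inside the Duhamel integral yields $\|\Phi(v)(t)\|_p + \|\Phi(v)(t)\|_\infty \le M/2 + T\,C(M)$, which is $\le M$ once $T \le M/(2C(M))$; continuity in $t$ of $t\mapsto \Phi(v)(t)$ in $L^p\cap L^\infty$ follows from strong continuity of ${\rm e}^{t\Delta}$ on $L^p$ (and, for the $L^\infty$ part, from the uniform boundedness of the integrand together with dominated convergence, exactly as one does for the classical subcritical heat equation). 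So $\Phi$ maps $B_M$ into itself.

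For the contraction estimate, take $v_1,v_2\in B_M$; using \eqref{Nonlin} and Hölder's inequality in the $L^p$ component (splitting $|f(v_1)-f(v_2)|\le C|v_1-v_2|({\rm e}^{\lambda|v_1|^p}+{\rm e}^{\lambda|v_2|^p})\le 2C{\rm e}^{\lambda M^p}|v_1-v_2|$ directly, since the exponential factors are bounded by a constant on $B_M$), one obtains $\|f(v_1(s))-f(v_2(s))\|_p + \|f(v_1(s))-f(v_2(s))\|_\infty \le 2C{\rm e}^{\lambda M^p}\,\|v_1-v_2\|_{X_T}$. Feeding this through Duhamel and Proposition \ref{LPLQQ} again gives $\|\Phi(v_1)-\Phi(v_2)\|_{X_T}\le 2C{\rm e}^{\lambda M^p}\,T\,\|v_1-v_2\|_{X_T}$, so shrinking $T$ further (say $T\le (4C{\rm e}^{\lambda M^p})^{-1}$) makes $\Phi$ a strict contraction. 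By the Banach fixed point theorem there is a unique $v\in B_M$ with $\Phi(v)=v$, i.e. a solution of the integral equation, which by Proposition \ref{4.1} (more precisely its analogue on $L^p\cap L^\infty$, or directly) is a weak solution of $(\mathcal{P}_1)$; in particular $v\in L^\infty(0,T;L^\infty)$.

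It remains to upgrade $v\in C([0,T];L^p\cap L^\infty)$ to $v\in C([0,T];\exp L^p_0)$. For this I would invoke the embedding \eqref{embed} of Lemma \ref{Orl-Leb}(iv): since $v(t)\in L^p\cap L^\infty$ with $\|v(t)\|_{\exp L^p}\le (\log 2)^{-1/p}(\|v(t)\|_p+\|v(t)\|_\infty)$ and the right-hand side is continuous in $t$, and moreover $L^p\cap L^\infty\hookrightarrow\exp L^p_0$, we get $v(t)\in\exp L^p_0$ for each $t$; continuity of $t\mapsto v(t)$ in the $\exp L^p$-norm follows by applying \eqref{embed} to the increments $v(t_n)-v(t)$. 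This completes the proof. The main obstacle is a somewhat delicate point rather than a hard estimate: handling the $L^\infty$ component of the time-continuity of the Duhamel term at $t=0$, since ${\rm e}^{t\Delta}$ is not strongly continuous on $L^\infty$ — but here the initial datum ${\rm e}^{t\Delta}v_0$ is continuous in $L^\infty$ because $v_0$ is uniformly continuous (indeed $v_0\in C_0^\infty$ in the application, though $v_0\in L^p\cap L^\infty$ with ${\rm e}^{t\Delta}v_0$ continuous in $L^\infty$ suffices), and the integral term is continuous in $L^\infty$ because its integrand is uniformly bounded on $B_M$; so the difficulty is only bookkeeping, not substance.
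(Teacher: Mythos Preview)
Your approach is essentially the paper's: a contraction-mapping argument with metric $\|v\|_T = \|v\|_{L^\infty_t L^p_x} + \|v\|_{L^\infty_t L^\infty_x}$, exploiting that the exponential factor is bounded once $\|v\|_\infty \le M$. The one real difference, and where your write-up has a small gap, is the choice of fixed-point space. You take $X_T = C([0,T]; L^p\cap L^\infty)$, which requires $t\mapsto {\rm e}^{t\Delta}v_0$ to be continuous in $L^\infty$ at $t=0$; for a general $v_0\in L^p\cap L^\infty$ this fails (as you yourself note), and invoking ``$v_0\in C_0^\infty$ in the application'' does not prove the proposition as stated.

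The paper sidesteps this by setting the fixed-point space to $C([0,T];\EPO)\cap L^\infty(0,T;L^\infty)$ with the \emph{same} metric $\|\cdot\|_T$: the $L^\infty$ component is only required to be bounded, not continuous, so no issue arises there, and the $\EPO$-continuity of $\Phi(v)$ comes for free from Proposition~\ref{continu0} for the linear part and from $f(v)\in L^1(0,T;\EPO)$ for the Duhamel part. An equally easy fix for your argument is to replace $X_T$ by $C([0,T];L^p)\cap L^\infty(0,T;L^\infty)$; your upgrade step via \eqref{embed} then still goes through once you observe that $L^p$-convergence together with a uniform $L^\infty$ bound forces $\EP$-convergence (using ${\rm e}^{s^p}-1 \le s^p\,{\rm e}^{M^p}$ for $0\le s\le M$).
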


\begin{proof}[Proof of Proposition \ref{C0}] We use a fixed point argument. We introduce, for any $M>0,$ and positive  time $T$  the following complete metric space
$${\mathcal Y}(M,T):=\Big\{\;v\in C([0,T];\EPO)\cap L^{\infty}(0,T;L^{\infty});\quad \|v\|_{T}\leq M\Big\},$$
where $\|v\|_{T}:=\|v\|_{L^{\infty}(0,T;L^{p})}+\|v\|_{L^{\infty}(0,T;L^{\infty})},$ and $\|v_0\|_{L^p\cap L^{\infty}}=\|v_0\|_{p}+\|v_0\|_{{\infty}}$.
Set
$$\Phi(v)(t):={\rm e}^{t\Delta}v_0+\int_0^{t}{\rm e}^{(t-s)\Delta}f(v(s))\,ds.$$
We will prove that, for suitable $M>0$ and $T>0,$   $\Phi$ is a contraction map from ${\mathcal Y}(M,T)$ into itself.

First, since $v_0\in L^p\cap L^{\infty},$ then by Lemma \ref{Orl-Leb} (iv), $v_0\in \EPO$ and by Proposition \ref{continu0}, ${\rm e}^{t\Delta}v_0 \in C([0,T]; \exp L^p_0).$ Obviously ${\rm e}^{t\Delta}v_0\in L^{\infty}(0,T;L^{\infty}).$ Second  by  \eqref{1.9}, $f(v)\in L^1(0,T; \EPO)$ whenever $v\in C([0,T];\EPO)\cap L^{\infty}(0,T;L^{\infty})$.
 Then, by Proposition \ref{continu0}, we conclude that $\Phi(v)\in C([0,T];\EPO)\cap L^{\infty}(0,T;L^{\infty})$.

Now, for every $v_1,\, v_2\in {\mathcal Y}(M,T)$, we have thanks to \eqref{Nonlin},
\begin{eqnarray*}
\|\Phi(v_1)-\Phi(v_2)\|_{L^{\infty}(0,T;L^q)}&\leq& C\int_0^T \|f(v_1(s))-f(v_2(s))\|_{q}\,ds\\
 &&\hspace{-2.5cm}\leq T \|f(v_1)-f(v_2)\|_{L^{\infty}(0,T;L^q)}\\
&&\hspace{-2.5cm}\leq CT\left({\rm e}^{\lambda  \|v_1\|^p_{L^\infty_t(L^\infty_x)}}+{\rm e}^{\lambda\|v_2\|^p_{L^\infty_t(L^\infty_x)}}\right)\|v_1-v_2\|_{L^\infty(0,T; L^q)}
\end{eqnarray*}
where $q=p$ or $q=\infty$. Then, it follows that
\begin{eqnarray}\nonumber
 \|\Phi(v_1)-\Phi(v_2)\|_{T}&\leq & 2C\,T\, {\rm e}^{\lambda M^p}\|v_1-v_2\|_{T}\\ \label{4.3} &\leq & 2C\,T\, {\rm e}^{\lambda M^p}\|v_1-v_2\|_{T}.
\end{eqnarray}
Similarly we have
\begin{eqnarray} \nonumber
  \|\Phi(v)\|_{T}  &\leq & \|v_0\|_{L^p\cap L^{\infty}}+C\,\,T\, {\rm e}^{\lambda M^p}\,\|v\|_{T}\\ \label{4.6} &\leq & \|v_0\|_{L^p\cap L^{\infty}}+2CM{\rm e}^{\lambda M^p}T.
\end{eqnarray}
From \eqref{4.3} and \eqref{4.6} we conclude that for $T>0$ and $M>\|v_0\|_{L^p\cap L^{\infty}}$ such that
$$2C {\rm e}^{\lambda M^p}T<1,\; \|v_0\|_{L^p\cap L^{\infty}}+2CM{\rm e}^{\lambda M^p}T\leq M,$$ $\Phi$ is a contraction map on ${\mathcal Y}(M,T)$. In particular, one can take  $M>\|v_0\|_{L^p\cap L^{\infty}}$ and $T<{M-\|v_0\|_{L^p\cap L^{\infty}}\over 2MC{\rm e}^{\lambda M^p}}.$ This finishes the proof of Proposition \ref{C0}.
\end{proof}
Following similar arguments as in \cite{MOT} and using Propositions \ref{4.1}-\ref{C0}, we end the proof of Theorem \ref{local}.\\

The solution constructed by the above Proposition  can be extended to a maximal solution by well known argument. Moreover, if $T_{\max}<\infty,$ then $\displaystyle\lim_{t\to T_{\max}}\|u(t)\|_{L^p\cap L^{\infty}} = \infty.$ Let us now give the proof of the lower blow-up estimates.

\begin{proof}[Proof of Theorem \ref{BLOW}] Let $u_0\in L^p\cap L^{\infty}$ and $u\in C([0,T_{\max}),\EPO)$  be the maximal solution of \eqref{1.1} given by Theorem \ref{local} (or Proposition \ref{C0}). To prove the lower blow-up estimates we use an argument introduced by Weissler in \cite[Section 4 and Remark (6)2]{W2}. See also \cite[Proposition 5.3, p. 901]{MW}. Assume that $T_{\max}<\infty.$ Then $\displaystyle\lim_{t\to T_{\max}}\|u(t)\|_{L^p\cap L^{\infty}} = \infty.$ Consider $u$ the solution starting at $u(t)$ for some $t\in [0,T_{\max}).$ If for some $M$ $$ \|u(t)\|_{L^p\cap L^{\infty}}+2CM{\rm e}^{\lambda M^p}(T-t)\leq M,$$
then $T<T_{\max}.$ Therefore, for any $M>0,$
 $$ \|u(t)\|_{L^p\cap L^{\infty}}+2CM{\rm e}^{\lambda M^p}(T_{\max}-t)> M.$$
Choosing $M=2\|u(t)\|_{L^p\cap L^{\infty}}$ it follows that
$$4C\|u(t)\|_{L^p\cap L^{\infty}}{\rm e}^{2^p\lambda \|u(t)\|_{L^p\cap L^{\infty}}^p}(T_{\max}-t)>\|u(t)\|_{L^p\cap L^{\infty}}.$$
That is
$${\rm e}^{2^p\lambda \|u(t)\|_{L^p\cap L^{\infty}}^p}\geq C(T_{\max}-t)^{-1},$$
for some positive constant $C.$ Hence,
$$2^p\lambda \|u(t)\|_{L^p\cap L^{\infty}}^p\geq -\log(T_{\max}-t)+C.$$
Then
$$ \lambda\|u(t)\|_{L^p\cap L^{\infty}}^p\geq -C_1\log(T_{\max}-t)+C_2$$
for some positive constants $C_1,\; C_2.$ This completes the proof of Theorem \ref{BLOW}.
\end{proof}
We obtain the following concerning problem $(\mathcal{P}_2).$
\begin{proposition}\label{UU2} Let $T>0$ and $v\in L^{\infty}(0,T;L^{\infty})$ given by Proposition \ref{C0}. Let $w_0\in\EPO$. Then for $\|w_0\|_{\EP}\leq \varepsilon$, with $\varepsilon>0$ small enough, there exist a time $\widetilde{T}=\widetilde{T}(w_0,\varepsilon,v)>0$ and a solution $w\in C([0,\widetilde{T}],\EPO)$ to problem $(\mathcal{P}_2)$.
\end{proposition}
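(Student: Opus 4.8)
The plan is to solve the integral version of $(\mathcal{P}_2)$ by a contraction argument carried out directly in $\EPO$, in the spirit of Proposition~\ref{C0}. I would fix once and for all an exponent $r$ with $p\le r<\infty$ and $r>N/2$ (possible with no extra hypothesis), set $\lambda'=2^{p-1}\lambda$ and $C_v={\rm e}^{2^{p-1}\lambda\,\|v\|_{L^\infty(0,T;L^\infty)}^p}$, and work in the complete metric space
$$
\mathcal X(K,\widetilde T)=\Big\{\,w\in C([0,\widetilde T];\EPO);\ \sup_{0\le t\le\widetilde T}\|w(t)\|_{\EP}\le K\,\Big\},\qquad 0<\widetilde T\le T,
$$
endowed with the distance induced by $\sup_{t}\|\cdot\|_{\EP}$, together with the map
$$
\Psi(w)(t)={\rm e}^{t\Delta}w_0+\int_0^t{\rm e}^{(t-s)\Delta}\big(f(w(s)+v(s))-f(v(s))\big)\,ds .
$$
By Proposition~\ref{4.1}, a fixed point of $\Psi$ in $\mathcal X(K,\widetilde T)$ is exactly a solution of $(\mathcal{P}_2)$ on $[0,\widetilde T]$.

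The heart of the matter is the nonlinear estimate. From \eqref{1.9}, the inequality $|a+b|^p\le 2^{p-1}(|a|^p+|b|^p)$ and $v\in L^\infty$, I would first derive the pointwise bounds
$$
|f(w_1+v)-f(w_2+v)|\le C\,C_v\,|w_1-w_2|\big({\rm e}^{\lambda'|w_1|^p}+{\rm e}^{\lambda'|w_2|^p}\big),
$$
and, taking $w_2=0$ (note $f(v)-f(v)=0$ is the value at $w=0$), $|f(w+v)-f(v)|\le C\,C_v\,|w|\,({\rm e}^{\lambda'|w|^p}+1)$. Then, writing ${\rm e}^{\lambda'|w_i|^p}=1+({\rm e}^{\lambda'|w_i|^p}-1)$, I would apply H\"older's inequality with $\tfrac1s=\tfrac1{2s}+\tfrac1{2s}$ for $s=p$ and $s=r$, use Lemma~\ref{sarah55} to pass from $L^{2s}$ to $\EP$, and Lemma~\ref{med} to bound $\|{\rm e}^{\lambda'|w_i|^p}-1\|_{2s}\le1$, the latter being legitimate as soon as $2r\lambda' K^p\le1$ and $\|w_i\|_{\EP}\le K$. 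This should yield, for $w,w_1,w_2\in\mathcal X(K,\widetilde T)$ and $s\in\{p,r\}$,
$$
\|f(w_1(t)+v(t))-f(w_2(t)+v(t))\|_{s}\le C^\ast\|w_1(t)-w_2(t)\|_{\EP},\qquad \|f(w(t)+v(t))-f(v(t))\|_{s}\le C^\ast\|w(t)\|_{\EP},
$$
with $C^\ast$ depending only on $C,C_v,p,r$; finiteness of all the $L^s$ norms involved follows from Corollary~\ref{step1} applied to $w+v\in C([0,\widetilde T];\EPO)$, which is in that class because $v\in C([0,T];\EPO)$ by Proposition~\ref{C0}.

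Next I would feed these bounds into $\Psi$, estimating the linear term by Proposition~\ref{pre}(i) and the Duhamel term by Proposition~\ref{pre}(iii) with $q=p$; the ensuing time integral of $(t-s)^{-N/(2r)}$ converges precisely because $N/(2r)<1$. This gives
$$
\sup_{[0,\widetilde T]}\|\Psi(w_1)-\Psi(w_2)\|_{\EP}\le C\big(\widetilde T^{\,1-\frac{N}{2r}}+\widetilde T\big)\sup_{[0,\widetilde T]}\|w_1-w_2\|_{\EP},\qquad \sup_{[0,\widetilde T]}\|\Psi(w)\|_{\EP}\le\|w_0\|_{\EP}+C\big(\widetilde T^{\,1-\frac{N}{2r}}+\widetilde T\big)K .
$$
For the range of $\Psi$ I would note that ${\rm e}^{t\Delta}w_0\in C([0,\infty);\EPO)$ by Proposition~\ref{continu0}, and that by Corollary~\ref{step1} the source term lies in $C([0,\widetilde T];L^p\cap L^r)$, so each ${\rm e}^{(t-s)\Delta}$ of it is in $L^p\cap L^\infty\hookrightarrow\EPO$ for $s<t$ (Proposition~\ref{LPLQQ}, Lemma~\ref{Orl-Leb}(iv)) with $\EP$-norm dominated by an $L^1$-in-$s$ weight, so the Duhamel term is $\EPO$-valued and continuous. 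Finally I would choose, in this order, $K$ so small that $2r\lambda' K^p\le1$, then $\varepsilon\le K/2$, then $\widetilde T=\widetilde T(w_0,\varepsilon,v)\in(0,T]$ so small that $C(\widetilde T^{1-N/(2r)}+\widetilde T)\le\tfrac12$; with these choices $\Psi$ maps $\mathcal X(K,\widetilde T)$ into itself and is a $\tfrac12$-contraction, and its unique fixed point is the required $w$.

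The main obstacle — and the reason one cannot simply iterate in $\EP$ via Proposition~\ref{pre}(i) alone — is that $f(w+v)-f(v)$ is pointwise still of the type $|w|\,{\rm e}^{c|w|^p}$, in fact with the enlarged exponent $c=\lambda'$ produced by the cross term with the bounded function $v$, and such functions do not lie in $\EP$ for a general $w\in\EPO$. The remedy is to exploit the smoothing of the heat semigroup: estimate the nonlinearity in $L^p\cap L^r$ and recover the $\EP$ norm via the $L^r$–$\EP$ bound of Proposition~\ref{pre}(iii), which costs only the integrable weight $(t-s)^{-N/(2r)}$ and hence needs only $r>N/2$. The smallness of the initial data enters exactly to keep $\|{\rm e}^{\lambda'|w|^p}-1\|_{L^{2r}}$ bounded through Lemma~\ref{med}, so that the $L^p\cap L^r$ norm of the nonlinearity stays linear in $\|w\|_{\EP}$ with a constant independent of $\widetilde T$; this is the technical point that makes the contraction close.
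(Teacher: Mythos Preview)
Your proposal is correct and follows essentially the same route as the paper. The paper isolates the key nonlinear estimate as Lemma~\ref{4.3333}, namely $\|f(w_1+v)-f(w_2+v)\|_{q}\le C\,{\rm e}^{2^{p-1}\lambda\|v\|_\infty^p}\|w_1-w_2\|_{\EP}$ for $p\le q<\infty$ under the smallness condition $2^p\lambda q M^p\le1$, and then leaves the fixed-point argument implicit; you have rederived exactly this estimate (your $C^\ast$-bound with $\lambda'=2^{p-1}\lambda$ and the constraint $2r\lambda'K^p\le1$ is the same condition) and spelled out the contraction in $C([0,\widetilde T];\EPO)$ via Proposition~\ref{pre}(iii), together with the continuity and $\EPO$-valuedness of the Duhamel term.
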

The proof of Proposition \ref{UU2} uses the following lemma.
\begin{lemma}\label{4.3333}Let $v\in L^{\infty}$ and $w_1,\,w_2\in \EP$ with $\|w_1\|_{\EP},\, \|w_2\|_{\EP}\leq M$ for some constant $M>0$. Let $p\leq q<\infty$, and assume that $2^p\lambda q M^p\leq 1$ where $\lambda$ is given by \eqref{Nonlin}. Then there exists a constant $C>0$ such that
$$\Big\|f(w_1+v)-f(w_2+v)\Big\|_{q}\leq\,C\,{\rm e}^{2^{p-1}\lambda \|v\|_{\infty}^p}\Big\|w_1-w_2\Big\|_{\EP}.$$
\end{lemma}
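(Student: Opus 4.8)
The plan is to estimate the difference $f(w_1+v)-f(w_2+v)$ using the Lipschitz assumption \eqref{Nonlin}, then to bound the resulting $L^q$ norm by factoring out the $L^\infty$ part coming from $v$ and applying Lemma \ref{sarah55} and Lemma \ref{med} to handle the exponential term. First I would write, thanks to \eqref{Nonlin},
\begin{equation*}
\big|f(w_1+v)-f(w_2+v)\big|\leq C|w_1-w_2|\left({\rm e}^{\lambda|w_1+v|^p}+{\rm e}^{\lambda|w_2+v|^p}\right),
\end{equation*}
and then use the elementary convexity inequality $|a+b|^p\leq 2^{p-1}(|a|^p+|b|^p)$ to split $|w_i+v|^p\leq 2^{p-1}|w_i|^p+2^{p-1}|v|^p$, which yields
\begin{equation*}
\big|f(w_1+v)-f(w_2+v)\big|\leq C\,{\rm e}^{2^{p-1}\lambda\|v\|_\infty^p}\,|w_1-w_2|\left({\rm e}^{2^{p-1}\lambda|w_1|^p}+{\rm e}^{2^{p-1}\lambda|w_2|^p}\right).
\end{equation*}

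Next I would take the $L^q$ norm and apply H\"older's inequality with exponents splitting $q$ into $2q$ and $2q$, writing
\begin{equation*}
\big\|f(w_1+v)-f(w_2+v)\big\|_{q}\leq C\,{\rm e}^{2^{p-1}\lambda\|v\|_\infty^p}\,\|w_1-w_2\|_{2q}\left(\big\|{\rm e}^{2^{p-1}\lambda|w_1|^p}\big\|_{2q}+\big\|{\rm e}^{2^{p-1}\lambda|w_2|^p}\big\|_{2q}\right).
\end{equation*}
Then, since $\|w_1-w_2\|_{2q}\leq\big(\Gamma(\tfrac{2q}{p}+1)\big)^{1/(2q)}\|w_1-w_2\|_{\EP}$ by Lemma \ref{sarah55}, and since $\|{\rm e}^{2^{p-1}\lambda|w_i|^p}\|_{2q}\leq 1+\|{\rm e}^{2^{p-1}\lambda|w_i|^p}-1\|_{2q}$, I would invoke Lemma \ref{med} with the parameter $2^{p-1}\lambda$ in place of $\lambda$ and $2q$ in place of $q$: the hypothesis $2^p\lambda q M^p\leq 1$ is exactly $2^{p-1}\lambda\cdot(2q)\cdot M^p\leq 1$, so Lemma \ref{med} applies and gives $\|{\rm e}^{2^{p-1}\lambda|w_i|^p}-1\|_{2q}\leq(2^{p-1}\lambda\cdot 2q\cdot M^p)^{1/(2q)}\leq 1$. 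Collecting the constants (all depending only on $p$, $q$, $\lambda$ and absorbing the $\Gamma$ factor) yields the claimed bound with a constant $C$ independent of $v$, $w_1$, $w_2$.

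The only mild subtlety is to make sure every application of a cited lemma is within its stated range: Lemma \ref{sarah55} requires $1\leq p\leq 2q$, which holds since $p\geq 1$ and $q\geq p$; Lemma \ref{med} requires $2^{p-1}\lambda\cdot(2q)\cdot M^p\leq 1$, which is the hypothesis; and the H\"older split requires $q<\infty$, which is assumed. I do not expect any real obstacle here — this is a routine but careful bookkeeping argument — so the "hard part" is merely tracking the constant $2^{p-1}$ through the convexity inequality and confirming it matches the threshold $2^p\lambda qM^p\leq1$ in the statement.
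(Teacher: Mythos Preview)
Your argument has a genuine gap at the H\"older step. You write
\[
\big\|f(w_1+v)-f(w_2+v)\big\|_{q}\leq C\,{\rm e}^{2^{p-1}\lambda\|v\|_\infty^p}\,\|w_1-w_2\|_{2q}\left(\big\|{\rm e}^{2^{p-1}\lambda|w_1|^p}\big\|_{2q}+\big\|{\rm e}^{2^{p-1}\lambda|w_2|^p}\big\|_{2q}\right),
\]
but the factor $\big\|{\rm e}^{2^{p-1}\lambda|w_i|^p}\big\|_{2q}$ is infinite: we are working on $\R^N$, and ${\rm e}^{2^{p-1}\lambda|w_i|^p}\geq 1$ everywhere, so its $L^{2q}(\R^N)$ norm diverges. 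The subsequent line $\|{\rm e}^{2^{p-1}\lambda|w_i|^p}\|_{2q}\leq 1+\|{\rm e}^{2^{p-1}\lambda|w_i|^p}-1\|_{2q}$ is likewise false, since by the triangle inequality the right-hand side should be $\|1\|_{L^{2q}(\R^N)}+\|{\rm e}^{2^{p-1}\lambda|w_i|^p}-1\|_{2q}$, and $\|1\|_{L^{2q}(\R^N)}=\infty$.

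The fix, and what the paper does, is to subtract the $1$ \emph{before} applying H\"older. Writing ${\rm e}^{2^{p-1}\lambda|w_i|^p}=\big({\rm e}^{2^{p-1}\lambda|w_i|^p}-1\big)+1$ inside the pointwise bound produces a term $2C\,{\rm e}^{2^{p-1}\lambda\|v\|_\infty^p}\|w_1-w_2\|_q$, which is controlled directly by Lemma~\ref{sarah55}, and terms of the form $\big\||w_1-w_2|\big({\rm e}^{2^{p-1}\lambda|w_i|^p}-1\big)\big\|_q$, to which H\"older with exponents $(2q,2q)$ now applies legitimately, giving the factor $\big\|{\rm e}^{2^{p-1}\lambda|w_i|^p}-1\big\|_{2q}$ that Lemma~\ref{med} handles under the hypothesis $2^p\lambda q M^p\leq 1$. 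After this reordering, the rest of your bookkeeping is correct.
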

\begin{proof}[Proof of the Lemma \ref{4.3333}] By the assumption \eqref{Nonlin} on $f$, we have
\begin{eqnarray*}
\Big\|f(w_1+v)-f(w_2+v)\Big\|_{q}&\leq& C \Big\||w_1-w_2|\left( {\rm e}^{2^{p-1}\lambda|w_1|^p+2^{p-1}\lambda |v|^p}+{\rm e}^{2^{p-1}\lambda|w_2|^p+2^{p-1}\lambda |v|^p}\right)\Big\|_{q}\\&&\hspace{-4cm}\leq  {\rm e}^{2^{p-1}\lambda \|v\|_{\infty}^p}\bigg(2C\Big\|w_1-w_2\Big\|_{q}+C\Big\||w_1-w_2|\left({\rm e}^{2^{p-1}\lambda |w_1|^p}-1\right)\Big\|_{q}\bigg)\\&&\hspace{-3cm}+C{\rm e}^{2^{p-1}\lambda \|v\|_{\infty}^p}\,\Big\||w_1-w_2|\left({\rm e}^{2^{p-1}\lambda |w_2|^p}-1\right)\Big\|_{q}\\&&\hspace{-4cm}\leq{\rm e}^{2^{p-1}\lambda \|v\|_{\infty}^p}\bigg(2C\Big\|w_1-w_2\Big\|_{q}+C\Big\|w_1-w_2\Big\|_{{2q}}\Big\|{\rm e}^{2^{p-1}\lambda |w_1|^p}-1\Big\|_{{2q}}\bigg)\\&&\hspace{-3cm}+C{\rm e}^{2^{p-1}\lambda \|v\|_{\infty}^p}\,\Big\|w_1-w_2\Big\|_{{2q}}\Big\|{\rm e}^{2^{p-1}\lambda |w_2|^p}-1\Big\|_{{2q}}\\&&\hspace{-4cm}\leq\,C\,{\rm e}^{2^{p-1}\lambda \|v\|_{\infty}^p}\Big\|w_1-w_2\Big\|_{\EP},
\end{eqnarray*}
where we have used H\"older inequality, Lemma \ref{sarah55}, Lemma \ref{med}  and the fact that
$(a+b)^p\leq 2^{p-1}(a^p+b^p)$, for every $a, b\geq 0$ and any $p\geq 1.$ This finishes the proof of Lemma \ref{4.3333}.
\end{proof}


\section{Non-existence}

The  following lemma is the key of the proof of Theorem \ref{nonex}.
\begin{lemma}
\label{NON}
Let $p>1,\,\alpha>0.$ Let $\Phi_\alpha$ be given by \eqref{ID} and $f$, $\lambda>0$ be as in \eqref{croissancef}. Then, there exists $\alpha_0>0$ such that for any $\alpha\geq \alpha_0$, $\varepsilon>0$ and $r>0$, we have
$$
\int_0^\varepsilon\,\int_{|x|<r}\,\exp\left(\lambda\,\biggl({\rm e}^{t\Delta}\Phi_\alpha\biggr)^p\right)\,dx\,dt=\infty\,.
$$
\end{lemma}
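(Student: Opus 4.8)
The plan is to obtain a pointwise lower bound for ${\rm e}^{t\Delta}\Phi_\alpha$ on a small space-time region near the origin, and then observe that this lower bound is strong enough to make the exponential integral diverge. First I would fix a small radius, say $|x| \leq 1/4$, and a small time interval $0 < t < \varepsilon$ with $\varepsilon \leq 1$, and estimate from below
$$
\bigl({\rm e}^{t\Delta}\Phi_\alpha\bigr)(x) = \int_{\R^N} G_t(x-y)\,\Phi_\alpha(y)\,dy \geq \int_{|y|<1/2} G_t(x-y)\,\alpha\bigl(-\log|y|\bigr)^{1/p}\,dy,
$$
using that $\Phi_\alpha(y) = \alpha(-\log|y|)^{1/p} \geq 0$ for $|y| < 1$ and is supported in the unit ball. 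Since $|x| \leq 1/4$, for $|y| < 1/2$ we have $|x-y| \leq 3/4$, so $G_t(x-y) = (4\pi t)^{-N/2}{\rm e}^{-|x-y|^2/4t} \geq (4\pi t)^{-N/2}{\rm e}^{-9/16t}$ is not helpful uniformly; instead I would localize $y$ to the ball $|y-x| < \sqrt{t}$ (intersected with $|y|<1/2$), on which $G_t(x-y) \geq c\,t^{-N/2}$ for a dimensional constant $c>0$. The key point is then to show that on this region $(-\log|y|)^{1/p}$ is comparable to $(-\log t)^{1/p}$ — more precisely, for $|x|$ small and $|y-x|<\sqrt t$ with $t$ small, one has $|y| \lesssim \sqrt{t}$, hence $-\log|y| \gtrsim \tfrac12\log(1/t) - C$, so $(-\log|y|)^{1/p} \geq c\,(\log(1/t))^{1/p}$ for $t$ small. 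Integrating $c\,t^{-N/2}$ over a region of measure $\sim t^{N/2}$ gives
$$
\bigl({\rm e}^{t\Delta}\Phi_\alpha\bigr)(x) \geq c_0\,\alpha\,\bigl(\log(1/t)\bigr)^{1/p}, \qquad |x| \leq 1/4,\;\; 0 < t < \varepsilon_0,
$$
for some constant $c_0 = c_0(N,p) > 0$ and some $\varepsilon_0 = \varepsilon_0(N,p) \in (0,1]$.

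Given this lower bound, raising to the $p$-th power yields $\bigl({\rm e}^{t\Delta}\Phi_\alpha\bigr)^p \geq c_0^p\,\alpha^p\,\log(1/t)$ on that region, so
$$
\exp\left(\lambda\bigl({\rm e}^{t\Delta}\Phi_\alpha\bigr)^p\right) \geq \exp\left(\lambda\,c_0^p\,\alpha^p\,\log(1/t)\right) = t^{-\lambda c_0^p \alpha^p}.
$$
Choosing $\alpha_0$ so that $\lambda\,c_0^p\,\alpha_0^p \geq 2$ (hence $\geq 2$ for all $\alpha \geq \alpha_0$), the time integral $\int_0^{\min(\varepsilon,\varepsilon_0)} t^{-\lambda c_0^p \alpha^p}\,dt$ diverges because the exponent is $\leq -2 < -1$; multiplying by the (positive, finite) measure of $\{|x| \leq \min(r,1/4)\}$ and noting that the integrand is nonnegative everywhere, we conclude $\int_0^\varepsilon\int_{|x|<r}\exp(\lambda({\rm e}^{t\Delta}\Phi_\alpha)^p)\,dx\,dt = \infty$, as claimed. (For $r$ or $\varepsilon$ large one simply restricts the domain of integration, using positivity of the integrand.)

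The main obstacle is the careful justification of the two-sided comparison $-\log|y| \asymp \log(1/t)$ on the localized region: one must choose the localization ball in $y$ with radius a small enough multiple of $\sqrt t$ (and $x$ in a small enough ball) so that on it $|y|$ is squeezed between constant multiples of $\sqrt t$, and simultaneously ensure the heat kernel stays bounded below by $c\,t^{-N/2}$ there — these two requirements are compatible only for $t$ below a threshold depending on $N$ and $p$, which is why the bound is stated for $0<t<\varepsilon_0$. Everything else is a routine lower bound for the Gaussian convolution of a nonnegative, radially decreasing, logarithmically singular function, followed by the elementary divergence of $\int_0 t^{-\gamma}\,dt$ for $\gamma \geq 1$.
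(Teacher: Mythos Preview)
Your overall strategy is the same as the paper's --- localize the convolution to a $\sqrt t$--scale region, extract a $(\log(1/t))^{1/p}$ lower bound, and conclude divergence of the $t$--integral --- but there is a genuine gap in the execution.

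The displayed lower bound
\[
\bigl({\rm e}^{t\Delta}\Phi_\alpha\bigr)(x)\;\geq\; c_0\,\alpha\,\bigl(\log(1/t)\bigr)^{1/p},\qquad |x|\leq \tfrac14,\;\; 0<t<\varepsilon_0,
\]
is false. For any fixed $x$ with $0<|x|\leq\tfrac14$ one has ${\rm e}^{t\Delta}\Phi_\alpha(x)\to\Phi_\alpha(x)=\alpha(-\log|x|)^{1/p}<\infty$ as $t\to 0$, while your right-hand side blows up. The error originates in the claim ``$|y-x|<\sqrt t$ with $t$ small implies $|y|\lesssim\sqrt t$'': this needs $|x|\lesssim\sqrt t$, not merely $|x|\leq\tfrac14$. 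Your final paragraph alludes to this (``$x$ in a small enough ball''), yet in the main argument you still multiply by the \emph{fixed} measure $|\{|x|\leq\min(r,\tfrac14)\}|$, which is inconsistent with the restriction you actually need. A symptom of the mistake is your dimension-independent threshold $\lambda c_0^p\alpha_0^p\geq 2$; the correct threshold must involve $N$.

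The fix is exactly what the paper does: restrict the $x$--integration to a $\sqrt t$--scale set (the paper uses the annulus $\sqrt t/2<|x|<\sqrt t$), so that the spatial integral contributes a factor $\sim t^{N/2}$. The time integrand is then $\gtrsim t^{\,N/2-\lambda c_0^p\alpha^p}$, and divergence requires $\lambda c_0^p\alpha^p\geq \tfrac N2+1$, giving $\alpha_0=((N+2)/(C\lambda))^{1/p}$ as in the paper. With this correction your argument goes through and is essentially identical to the paper's.
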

\begin{proof}[{Proof of Lemma \ref{NON}}]
 Let $B(a, \rho)$ denotes the open ball centered at $a\in\R^N$ and with radius $\rho>0.$ Fix $\varepsilon,\, r>0$. For $\rho=\min\left(r, {1\over 4}\right)$, we have $B(3x, |x|)\subset B(0, 1)$ for any $|x|<\rho$. Therefore, for any $|x|<\rho$, it holds
\begin{eqnarray*}
\biggl({\rm e}^{t\Delta}\Phi_\alpha\biggr)(x)&=&\frac{1}{(4\pi t)^{N/2}}\,\int_{|x|<1}\,{\rm e}^{-{|x-y|^2\over 4t}}\,\Phi_\alpha(y)\,dy\\
&\geq&
\frac{\alpha}{(4\pi t)^{N/2}}\,\int_{|y-3x|<|x|}\, {\rm e}^{-{|x-y|^2\over 4t}}\,\Big(-\log|y|\Big)^{{1\over p}}\,dy\\&\geq&
C\alpha\left({|x|^2\over t}\right)^{N/2}\,{\rm e}^{-{9\over 4}{|x|^2\over t}}\,\Big(-\log 4|x|\Big)^{1/p}\,.
\end{eqnarray*}
Let $\eta=\min\left(\varepsilon, \rho^2\right)$. Then, for any $0<t<\eta$, we have $B(0, \sqrt{t})\subset B(0,\rho)$. Hence
\begin{eqnarray*}
\int_0^\varepsilon\,\int_{|x|<r}\,\exp\left(\lambda\,\biggl({\rm e}^{t\Delta}\Phi_\alpha\biggr)^p\right)\,dx\,dt&\geq& \int_0^\eta\,\int_{|x|<\rho}\,\exp\left(\lambda\,\biggl({\rm e}^{t\Delta}\Phi_\alpha\biggr)^p\right)\,dx\,dt\\&\geq&
\int_0^\eta\,\int_{{\sqrt{t}\over 2}<|x|<\sqrt{t}}\,\exp\left(-C\lambda\alpha^p\log(4|x|)\right)\,dx\,dt\\&\geq&
C_\alpha\,\int_0^\eta\, t^{{N\over 2}-{C\lambda\alpha^p\over 2}}\,dt=\infty,
\end{eqnarray*}
for $\alpha\geq \alpha_0:=\left({N+2\over C\lambda}\right)^{1/p}.$ This finishes the proof of Lemma \ref{NON}.
\end{proof}

The proof of Theorem \ref{nonex} follows similar arguments as in \cite{IRT} and uses the previous Lemma.
\section{Global Existence}


This section is devoted to the proof of Theorem \ref{GE}. The proof uses a fixed point argument on the associated integral equation
 \begin{equation}\label{NN9}
   u(t)= {\rm e}^{t\Delta}u_{0}+\int_{0}^{t}{\rm e}^{(t-s)\Delta}(f(u))(s) ds,
\end{equation}
 where  $\|u_0\|_{\EP}\leq \varepsilon$, with  small $\varepsilon>0$ to be fixed later. The  nonlinearity $f$ satisfies $f(0)=0$ and
\begin{equation}\label{NNN9}
|f(u)-f(v)|\leq C \left|u-v\right|\bigg(|u|^{m-1}{\rm e}^{\lambda |u|^p}+|v|^{m-1}{\rm e}^{\lambda |v|^p}\bigg),
\end{equation}
for some constants $C>0$ and $\lambda>0,\, p\geq 1$ and $m$ is larger than $1+{2p\over N}.$  From \eqref{NNN9}, we obviously deduce that
\begin{equation}
\label{taylorm}
 |f(u)-f(v)|\leq C|u-v|\sum_{k=0}^{\infty} \frac{\lambda^{k}}{k!} \bigg(|u|^{pk+m-1}+|v|^{pk+m-1}\bigg).
\end{equation}
We will perform a fixed point argument on a suitable metric space.  For $M>0$ we  introduce the space
$$Y_M :=\left\{u\in L^\infty(0,\infty, \EP);\;\displaystyle\sup_{t>0}  t^{\sigma}\|u(t)\|_{a}+\|u\|_{L^{\infty}(0,\infty;\EP)}\leq M\right\},$$ where $a>{N(m-1)\over 2}\geq p$ and $$\sigma={1\over m-1}-\frac{N}{2a}={N\over 2}\left({2\over N(m-1)}-{1\over a}\right)>0.$$ Endowed with the metric $d(u,v)=\displaystyle\sup_{t>0} \Big(t^{\sigma}\|u(t)-v(t)\|_{r}\Big)$, $Y_M$ is a complete metric space. This follows by Proposition \ref{fatou}.

For $u\in Y_M,$ we define $\Phi(u)$ by
\begin{equation}\label{Phi}
    \Phi(u)(t):={\rm e}^{t\Delta}u_{0}+\int_{0}^{t}{\rm e}^{(t-s)\Delta}(f(u(s))) ds.
\end{equation}

By Proposition \ref{pre} (i), Proposition \ref{LPLQQ} and Lemma \ref{sarah55}, we have
$$\|{\rm e}^{t\Delta}u_{0}\|_{\EP}\leq \|u_{0}\|_{\EP},$$
and
\begin{eqnarray*}
t^\sigma \|{\rm e}^{t\Delta}u_{0}\|_a&\leq & t^\sigma t^{-{N\over 2}\left({2\over N(m-1)}-{1\over a}\right)}\|u_{0}\|_{{N(m-1)\over 2}}
\\ &= & \|u_{0}\|_{{N(m-1)\over 2}}\leq C\|u_{0}\|_{\EP},
\end{eqnarray*}
where we have used $1\leq p\leq {N(m-1)\over 2}< a.$

Let $u\in Y_M$. Using Proposition \ref{pre} and Corollary \ref{lemme estimates}, we get for $q>{N\over 2}$,
\begin{eqnarray*}
\|\Phi(u)(t)\|_{\EP}&\leq&\|{\rm e}^{t\Delta}u_{0}\|_{\EP}+\int_{0}^{t}\left\|{\rm e}^{(t-s)\Delta}
(f(u(s))) \right\|_{\EP}\,ds\\
&\leq&\|{\rm e}^{t\Delta}u_{0}\|_{\EP}+\int_{0}^{t}\kappa(t-s)\bigg(\|f(u(s))\|_{L^1\cap L^q}
\bigg)\,ds\\
&\leq&\|{\rm e}^{t\Delta}u_{0}\|_{\EP}+ \|f(u)\|_{L^{\infty}(0,\infty;(L^1\cap L^q))}\int_{0}^{\infty}\kappa(s)\,ds\\
&\leq& \|{\rm e}^{t\Delta}u_{0}\|_{\EP}+C \|f(u)\|_{L^{\infty}(0,\infty;(L^1\cap L^q))}.
\end{eqnarray*}
Hence by Part (i) of Proposition \ref{pre}, we get
$$
\|\Phi(u)\|_{L^\infty(0{,\infty; \EP)}}\leq \|u_{0}\|_{\EP}+ C \|f(u)\|_{L^{\infty}(0,\infty;\,L^1\cap L^q)}.
$$
It remains to estimate the nonlinearity $f(u)$ in $L^r$ for $r=1,\,q.$ To this end, let us remark that
\begin{equation}\label{3.4}
|f(u)|
\leq C|u|^m\left({\rm e}^{\lambda |u|^p}-1\right)+C|u|^m.
\end{equation}
By H\"{o}lder's inequality and Lemma \ref{sarah55}, we have for $1\leq r\leq q$ and since $m\geq p$,
\begin{eqnarray}\label{3.6}
\nonumber
\|f(u)\|_{{r}} &\leq & C\|u\|_{{mr}}^m +C\||u|^m({\rm e}^{\lambda |u|^p}-1)\|_{{r}}\\ & \leq & C\|u\|_{{mr}}^m +C\|u\|_{{2mr}}^m\|{\rm e}^{\lambda |u|^p}-1\|_{{2r}} \\ &\leq& \nonumber C\|u\|_{\EP}^m\bigg(\|{\rm e}^{\lambda |u|^p}-1\|_{{2r}}+1\bigg).
\end{eqnarray}
According to Lemma \ref{med},  and the fact that $u\in Y_M$, we have for $2q\lambda M^p\leq 1$,
\begin{equation}
\label{3.8}
\|f(u)\|_{L^\infty(0,\infty; L^{r})}\leq CM^m.
\end{equation}

 Finally, we obtain
\begin{eqnarray*}
\|\Phi(u)\|_{L^\infty(0,\infty, \EP)}&\leq&  \|u_{0}\|_{\EP}+C M^m\\
&\leq& \varepsilon+CM^m.
\end{eqnarray*}

Let  $u,\,v$ be two elements of $Y_M.$ By using \eqref{taylorm} and Proposition \ref{LPLQQ}, we obtain
\begin{eqnarray*}
 t^{\sigma}\|\Phi(u)(t)-\Phi(v)(t)\|_{a}&\leq&
t^{\sigma}\int_{0}^{t}\left\| {\rm e}^{(t-s)\Delta}
(f(u(s))-f(v(s)))\right\|_{a} ds\\&\leq& t^{\sigma}
\int_{0}^{t}(t-s)^{-{N\over 2}(\frac{1}{r}-\frac{1}{a})}\left\|f(u(s))-f(v(s))\right\|_{r}\,ds\\
&&\hspace{-3cm}\leq C\sum_{k=0}^{\infty}\frac{\lambda^{k}}{k!} t^{\sigma}\int_{0}^{t}(t-s)^{-{N\over 2}(\frac{1}{r}-\frac{1}{a})}
\|(u-v)(|u|^{pk+m-1}+|v|^{pk+m-1})\|_{r}ds,
\end{eqnarray*}
where $1\leq r\leq a.$ We use the H\"{o}lder inequality with ${1\over r}={1\over a}+{1\over q}$ to obtain
\begin{eqnarray*}
 t^{\sigma}\|\Phi(u)(t)-\Phi(v)(t)\|_{a}&\leq& C\sum_{k=0}^{\infty}\frac{\lambda^{k}}{k!} t^{\sigma}\int_{0}^{t}(t-s)^{-{N\over 2}(\frac{1}{r}-\frac{1}{a})}
\|u-v\|_{a}\times\\
&&\||u|^{pk+m-1}+|v|^{pk+m-1}\|_{q}ds,\\
&\leq&  C\sum_{k=0}^{\infty}\frac{\lambda^{k}}{k!} t^{\sigma}\int_{0}^{t}(t-s)^{-{N\over 2}(\frac{1}{r}-\frac{1}{a})}
\|u-v\|_{a}\times\\ &&\left(\|u\|^{pk+m-1}_{{q(pk+m-1)}}+\|v\|^{pk+m-1}_{{q(pk+m-1)}}\right)ds.
\end{eqnarray*}

Using interpolation inequality where $\frac{1}{q(pk+m-1)}=\frac{\theta}{a}+\frac{1-\theta}{\rho},\; p\leq \rho<\infty,$  we find that
\begin{equation*}
\begin{split}
t^{\sigma}\left\|\int_0^t{\rm e}^{(t-s)\Delta}\,(f(u)-f(v))\,ds\right\|_{a}\,\,\!\!\!\!
&\leq C\sum_{k=0}^{\infty}\frac{\lambda^{k}}{k!}t^{\sigma}\int_0^t\,(t-s)^{-\frac{N}{2}
(\frac{1}{r}-\frac{1}{a})}\|u-v\|_{a}\\
&\hspace{-5.5cm}\times\biggl(\|u\|^{(pk+m-1)\theta}_{{a}}
  \|u\|^{(pk+m-1)(1-\theta)}_{{\rho}}
  +\|v\|^{(pk+m-1)\theta}_{{a}}\|v\|^{(pk+m-1)(1-\theta)}_{{\rho}}\biggr)\,ds.
\end{split}
\end{equation*}
By Lemma \ref{sarah55}, we obtain
\begin{eqnarray}\label{4.16b}
&&t^{\sigma}\left\|\int_0^t{\rm e}^{(t-s)\Delta}\,(f(u)-f(v))\,ds\right\|_{a}\nonumber\\
&&\qquad\qquad\leq C\sum_{k=0}^{\infty}\frac{\lambda^{k}}{k!}t^{\sigma}
\int_0^t\,(t-s)^{-\frac{N}{2}(\frac{1}{r}-\frac{1}{a})}\|u-v\|_{a}\Gamma\left(\frac{\rho}{p}+1\right)
^{\frac{(pk+m-1)(1-\theta)}{\rho}}\qquad\qquad\nonumber\\
&&\qquad\times\left(\|u\|^{(pk+m-1)\theta}_{{a}}\|u\|^{(pk+m-1)(1-\theta)}_{\EP}+
\|v\|^{(pk+m-1)\theta}_{{a}}\|v\|^{(pk+m-1)(1-\theta)}_{\EP}\right)\,ds.
\end{eqnarray}
Applying the fact that $u,\; v\in\; Y_M $ in  \eqref{4.16b}, we see that
\begin{eqnarray}\label{4.18sb}
&&t^{\sigma}\left\|\int_0^t{\rm e}^{(t-s)\Delta}\,(f(u)-f(v))\,ds\right\|_{a}\nonumber\\
&&\qquad\leq Cd(u,v)\sum_{k=0}^{\infty}\frac{\lambda^{k}}{k!}\Gamma\left(\frac{\rho}{p}+1\right)^{\frac{(pk+m-1)(1-\theta)}{\rho}} M^{pk+m-1}
\nonumber\\
&&\qquad \qquad\qquad\times t^{\sigma}\bigg(\int_0^t(t-s)^{-\frac{N}{2}(\frac{1}{r}-\frac{1}{a})}s^{-\sigma(1+(pk+m-1)\theta)}\,ds\bigg)\nonumber\\
&&\qquad\leq Cd(u,v)\,\sum_{k=0}^{\infty}\frac{\lambda^{k}}{k!}\Gamma\left(\frac{\rho}{p}+1\right)^{\frac{(pk+m-1)(1-\theta)}{\rho}} M^{pk+m-1}
\qquad\qquad\qquad\qquad\qquad\nonumber\\
&&\qquad\qquad\qquad\times {\mathcal{B}}\left(1-\frac{N}{2}\left(\frac{1}{r}-\frac{1}{a}\right),1-\sigma\big(1+(pk+m-1)\theta\big)\right),
\end{eqnarray}
where the parameters $a,\,q,\,r,\,\theta=\theta_k,\,\rho=\rho_k$  are given by Lemma \ref{params}.
For these parameters, using \eqref{gamma4} and \eqref{gamma1}, we obtain that
\begin{equation}\label{4.20b}
{\mathcal{B}}\left(1-\frac{N}{2}\left(\frac{1}{r}-\frac{1}{a}\right),1-\sigma\big(1+(pk+m-1)\theta\big)\right)\leq C .
\end{equation}
Moreover, using \eqref{behav1}-\eqref{behav2}-\eqref{behav3} together with \eqref{gamma3} and \eqref{gamma2} gives
\begin{equation}\label{4.21b}
   \Gamma\left(\frac{\rho_k}{p}+1\right)^{\frac{(pk+m-1)(1-\theta_k)}{\rho_k}} \leq C^k k !.
\end{equation}
 Combining \eqref{4.18sb}, \eqref{4.20b} and \eqref{4.21b} we get
$$
t^{\sigma}\left\|\int_0^t{\rm e}^{(t-s)\Delta}\,(f(u)-f(v))\,ds\right\|_{a}\leq C d(u,v) \sum_{k=0}^{\infty} \,{(C\lambda)^k} M^{pk+m-1}.$$
Hence, we get for $M$ small,
\begin{equation*}\label{4.18sss}
t^{\sigma}\left\|\int_0^t{\rm e}^{(t-s)\Delta}\,(f(u)-f(v))\,ds\right\|_{a}\leq C M^{m-1} d(u,v).
\end{equation*}

The above estimates show that $\Phi : Y_M \to Y_M $ is a contraction mapping.  By Banach's fixed point theorem, we thus obtain the existence of a unique
 $u$ in  $Y_M$ with $\Phi(u)=u.$ By \eqref{Phi}, $u$ solves the integral equation \eqref{NN9} with $f$ satisfying \eqref{NNN9}. The estimate \eqref{Linfinibihar} follows from $u\in Y_M.$ This terminates the proof of the existence of a  global solution to \eqref{NN9} for $N>\frac{2p}{p-1}$.


We will now prove the statement \eqref{1.8}. For $q\geq\frac{N}{2}$ and $q\geq p$, we have
\begin{eqnarray}\label{V.549}
 && \|u(t)-{\rm e}^{t\Delta}u_0\|_{\EP} \qquad\qquad\qquad\qquad\nonumber\\
 &&\qquad\leq \int_0^t\|{\rm e}^{(t-s)\Delta}
  f(u(s))\|_{\EP}\,ds\nonumber\\
  &&\qquad\leq C\int_0^t\|{\rm e}^{(t-s)\Delta}f(u(s))\|_{p}ds+C\int_0^t\|{\rm e}^{(t-s)\Delta}f(u(s))\|_{{\infty}}\,ds\nonumber\\
  &&\qquad\leq C\int_0^t\|f(u(s))\|_{p}ds+C\int_0^t(t-s)^{-\frac{N}{2q}} \|f(u(s))\|_{q}\,ds.
\end{eqnarray}
Now, let us  estimate $\|f(u(t))\|_{r}$ for $r=p,\,q.$  We have
\begin{equation*}
    |f(u)|\leq C|u|^m{\rm e}^{\lambda |u|^p}.
\end{equation*}
Therefore, we obtain
\begin{equation*}
 \|f(u)\|_{r}\leq C \||u|^m({\rm e}^{\lambda |u|^p}-1+1)\|_{r}.
\end{equation*}
By using H\"{o}lder inequality and Lemma \ref{sarah55}, we obtain
\begin{eqnarray*}
  \|f(u)\|_{r}&\leq& C\|u\|^{m}_{{2mr}}\|{\rm e}^{\lambda |u|^p}-1\|_{{2r}}+\|u\|^{m}_{{mr}}\\
&\leq&C\|u\|^{m}_{\EP}\left(\|{\rm e}^{\lambda |u|^p}-1\|_{{2r}}+1\right).
\end{eqnarray*}
Using Lemma \ref{med} we conclude that
\begin{equation}\label{V.559}
\|f(u)\|_{r}\leq C \|u\|^{m}_{\EP} \left((2\lambda r M^p)^{\frac{1}{2r}}+1\right)\leq C \|u\|^{m}_{\EP}.
\end{equation}
Substituting \eqref{V.559} in \eqref{V.549}, we have
\begin{eqnarray*}
\|u(t)-{\rm e}^{t\Delta}u_0\|_{\EP}&\leq& C\int_0^t \bigg[\|u\|^{m}_{\EP}+(t-s)^{-\frac{N}{2q}}
\|u\|^{m}_{\EP}\bigg]ds\\
&\leq& Ct \|u\|^{m}_{L^{\infty}(0,\infty;\,\EP)}+Ct^{1-\frac{N}{2q}}\|u\|^{m}_{L^{\infty}(0,\infty;\,\EP)} \\
&\leq &C_1t+C_2t^{1-\frac{N}{2q}},
\end{eqnarray*}
where $C_1,\,C_2$ are  finite positive constants. This gives
\begin{equation*}
    \lim_{t\longrightarrow0}\|u(t)-{\rm e}^{t\Delta}u_0\|_{\EP}=0,
\end{equation*}
and proves statement \eqref{1.8}.

Finally the fact that $u(t)\to u_0$ as $t\to 0$ in the weak$^*$ topology can be done as in \cite{Ioku}. So we omit the proof here.



\end{document}